\newcommand{\R}{\mathbb{R}}
\newcommand{\E}{\mathbb{E}}
\newcommand{\vect}{\mathrm{vec}}
\newcommand{\rank}{\mathrm{rank}}
\newcommand{\norm}[1]{\left\lVert#1\right\rVert}
\newcommand{\tr}[1]{\mathrm{Tr}\left(#1\right)}
\newcommand{\inner}[2]{\left\langle #1, #2 \right\rangle}
\global\long\def\P{\mathbf{P}}%
\DeclareMathOperator*{\argmax}{arg\,max}
\global\long\def\A{\mathbf{A}}%
\global\long\def\J{\mathbf{J}}%
\global\long\def\e{\mathbf{e}}%
\global\long\def\E{\mathbb{E}}%
\global\long\def\vect{\mathrm{vec}}%
\global\long\def\norm#1{\left\lVert #1\right\rVert }%
\global\long\def\tr#1{\mathrm{Tr}\left(#1\right)}%
\global\long\def\inner#1#2{\left\langle #1, #2 \right\rangle }%
\global\long\def\P{\mathbf{P}}%
\global\long\def\argmax{\arg\,\max}%
\global\long\def\rank{\mathrm{rank}}%
\global\long\def\AA{\mathcal{A}}%
\global\long\def\R{\mathbb{R}}%
\global\long\def\ub{\mathrm{ub}}%
\global\long\def\lb{\mathrm{lb}}%
\global\long\def\eqdef{\overset{\text{def}}{=}}%
\global\long\def\PP{\mathcal{P}}%
\theoremstyle{plain}
\newtheorem{thm}{\protect\theoremname}
\theoremstyle{definition}
\newtheorem{defn}[thm]{\protect\definitionname}
\theoremstyle{plain}
\newtheorem{prop}[thm]{\protect\propositionname}
\theoremstyle{remark}
\theoremstyle{plain}
\newtheorem{cor}[thm]{\protect\corollaryname}
\theoremstyle{plain}
\newtheorem{lem}[thm]{\protect\lemmaname}
\providecommand{\definitionname}{Definition}
\providecommand{\lemmaname}{Lemma}
\providecommand{\propositionname}{Proposition}
\providecommand{\remarkname}{Remark}
\providecommand{\theoremname}{Theorem}
\providecommand{\corollaryname}{Corollary}
\title{Preconditioned Gradient Descent for Over-Parameterized Nonconvex Matrix Factorization}
\author{Gavin Zhang\\
University of Illinois at Urbana\textendash Champaign\\
\texttt{jialun2@illinois.edu}~\\
\And Salar Fattahi\\
University of Michigan\\
\texttt{fattahi@umich.edu}\\
\And Richard Y. Zhang\\
University of Illinois at Urbana\textendash Champaign\\
\texttt{ryz@illinois.edu}}
\begin{document}

\maketitle

\begin{abstract}
In practical instances of nonconvex matrix factorization, the rank
of the true solution $r^{\star}$ is often unknown, so the rank $r$
of the model can be overspecified as $r>r^{\star}$. This over-parameterized
regime of matrix factorization significantly slows down the convergence
of local search algorithms, from a linear rate with $r=r^{\star}$
to a sublinear rate when $r>r^{\star}$. We propose an inexpensive
preconditioner for the matrix sensing variant of nonconvex matrix
factorization that restores the convergence rate of gradient descent
back to linear, even in the over-parameterized case, while also making
it agnostic to possible ill-conditioning in the ground truth. Classical
gradient descent in a neighborhood of the solution slows down due
to the need for the model matrix factor to become singular. Our key
result is that this singularity can be corrected by $\ell_{2}$ regularization
with a specific range of values for the damping parameter. In fact,
a good damping parameter can be inexpensively estimated from the current
iterate. The resulting algorithm, which we call preconditioned gradient
descent or PrecGD, is stable under noise, and converges linearly to
an information theoretically optimal error bound. Our numerical experiments
find that PrecGD works equally well in restoring the linear convergence
of other variants of nonconvex matrix factorization in the over-parameterized
regime.
\end{abstract}

\section{Introduction}
\global\long\def\rank{\mathrm{rank}}%
\global\long\def\AA{\mathcal{A}}%
\global\long\def\R{\mathbb{R}}%
\global\long\def\ub{\mathrm{ub}}%
\global\long\def\lb{\mathrm{lb}}%
\global\long\def\eqdef{\overset{\text{def}}{=}}%
\global\long\def\PP{\mathcal{P}}%
Numerous problems in machine learning can be reduced to the \emph{matrix
factorization} problem of recovering a low-rank positive semidefinite
matrix $M^{\star}\succeq0$, given a small number of potentially noisy
observations~\citep{yu2009fast,luo2014efficient,candes2011robust,chandrasekaran2011rank,ahmed2013blind,ling2015self,singer2011angular}.
In every case, the most common approach is to formulate an
$n\times n$ candidate matrix $M=XX^{T}$ in factored form, and to
minimize a \emph{nonconvex} empirical loss $f(X)$ over its $n\times r$
low-rank factor $X$. But in most real applications of nonconvex
matrix factorization, the rank of the ground truth $r^{\star}=\rank(M^{\star})$
is unknown. It is reasonable to choose the rank $r$ of the model
$XX^{T}$ conservatively, setting it to be potentially larger than
$r^{\star}$, given that the ground truth can be exactly recovered
so long as $r\ge r^{\star}$. In practice, this will often lead to
an \emph{over-parameterized} regime, in which $r>r^{\star}$, and
we have specified more degrees of freedom in our model $XX^{T}$ than
exists in the underlying ground truth $M^{\star}$.

\citet{zhuo2021computational} recently pointed out that nonconvex
matrix factorization becomes substantially less efficient in the over-parameterized
regime. For the prototypical instance of matrix factorization known
as \emph{matrix sensing} (see Section~\ref{sec:background} below
for details) it is well-known that, if $r=r^{\star}$, then (classic)
gradient descent or GD
\[
X_{k+1}=X_{k}-\alpha\nabla f(X_{k})\tag{GD}
\]
converges at a linear rate, to an $\epsilon$-accurate iterate in
$O(\kappa\log(1/\epsilon))$ iterations, where $\kappa = \lambda_1(M^\star)/\lambda_{r^*}(M^\star)$ is the condition number of the ground truth ~\citep{NIPS2015_32bb90e8,tu2016low}.
But in the case that $r>r^{\star}$, \citet{zhuo2021computational}
proved that gradient descent slows down to a \emph{sublinear} convergence
rate, now requiring $\mathrm{poly}(1/\epsilon)$ iterations to yield
a comparable $\epsilon$-accurate solution. This is a dramatic, exponential
slow-down: whereas 10 digits of accuracy can be expected in a just
few hundred iterations when $r=r^{\star}$, tens of thousands of iterations
might produce just 1-2 accurate digits once $r>r^{\star}$. The slow-down
occurs even if $r$ is just off by one, as in $r=r^{\star}+1$.

It is helpful to understand this pheonomenon by viewing over-parameterization as a special, extreme case of ill-conditioning, where the condition number of the ground truth, $\kappa$, is taken to infinity. In this limit, the classic linear rate $O(\kappa\log(1/\epsilon))$ breaks down, and in reality, the convergence rate deterioriates to sublinear.

In this paper, we present an inexpensive
\emph{preconditioner} for gradient descent. The resulting algorithm, which we call PrecGD, corrects for both ill-conditioning and over-parameterization at the same time, without viewing them as distinct concepts. We prove, for the matrix
sensing variant of nonconvex matrix factorization, that the preconditioner
restores the convergence rate of gradient descent back to linear,
even in the over-parameterized case, while also making it agnostic
to possible ill-conditioning in the ground truth. Moreover, PrecGD maintains
a similar per-iteration cost to regular gradient descent, is stable
under noise, and converges linearly to an information theoretically
optimal error bound.

We also perform numerical experiments on other variants of nonconvex
matrix factorization, with different choices of the empirical loss
function $f$. In particular, we consider different $\ell_{p}$ norms
with $1\le p<2$, in order to gauge the effectiveness of PrecGD for
increasingly nonsmooth loss functions. Our numerical experiments find
that, if regular gradient descent is capable of converging quickly
when the rank is known $r=r^{\star}$, then PrecGD restores this rapid
converging behavior when $r>r^{\star}$. PrecGD is able to overcome
ill-conditioning in the ground truth, and converge reliably without
exhibiting sporadic behavior. 

\section{Proposed Algorithm: Preconditioned Gradient Descent}

Our preconditioner is inspired by a recent work of \citet{tong2020accelerating}
on matrix sensing with an ill-conditioned ground truth $M^{\star}$.
Over-parameterization can be viewed as the limit of this regime, in
which $\lambda_{r}(M^{\star})$, the $r$-th largest eigenvalue of
$M^{\star}$, is allowed to approach all the way to zero. For finite
but potentially very small values of $\lambda_{r}(M^{\star})>0$,
 \citet{tong2020accelerating} suggests the following iterations,
which they named \emph{scaled} gradient descent or ScaledGD:
\[
X_{k+1}=X_{k}-\alpha\nabla f(X_{k})(X_{k}^{T}X_{k})^{-1}.\tag{ScaledGD}
\]
They prove that the scaling allows the iteration to make a large, constant
amount of progress at every iteration, independent of the value of
$\lambda_{r}(M^{\star})>0$. However, applying this same scheme to
the over-parameterized case with $\lambda_{r}(M^{\star})=0$ results
in an inconsistent, sporadic behavior. 

The issues encountered by both regular GD and ScaledGD with over-parameterization
$r>r^{\star}$ can be explained by the fact that our iterate $X_{k}$
must necessarily become \emph{singular} as our rank-$r$ model $X_{k}X_{k}^{T}$
converges towards the rank-$r^{\star}$ ground truth $M^{\star}$.
For GD, this singularity causes the per-iteration progress itself
to decay, so that more and more iterations are required for each fixed
amount of progress. ScaledGD corrects for this decay in per-iteration
progress by suitably rescaling the search direction. However, the
rescaling itself requires inverting a near-singular matrix, which
causes algorithm to take on sporadic values.

\begin{figure}[t]
\includegraphics[width=0.48\textwidth]{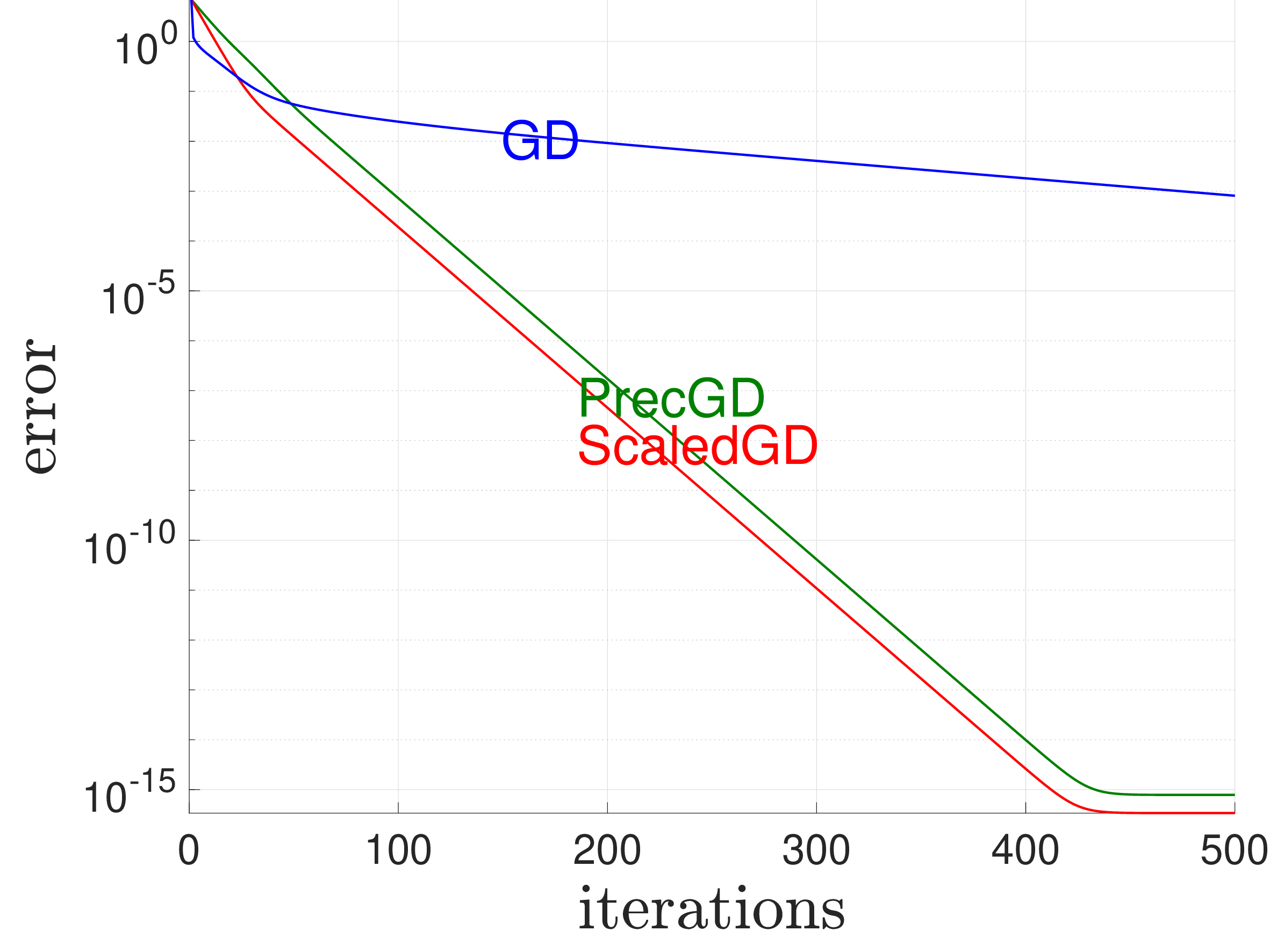} 
\hfill{}\includegraphics[width=0.48\textwidth]{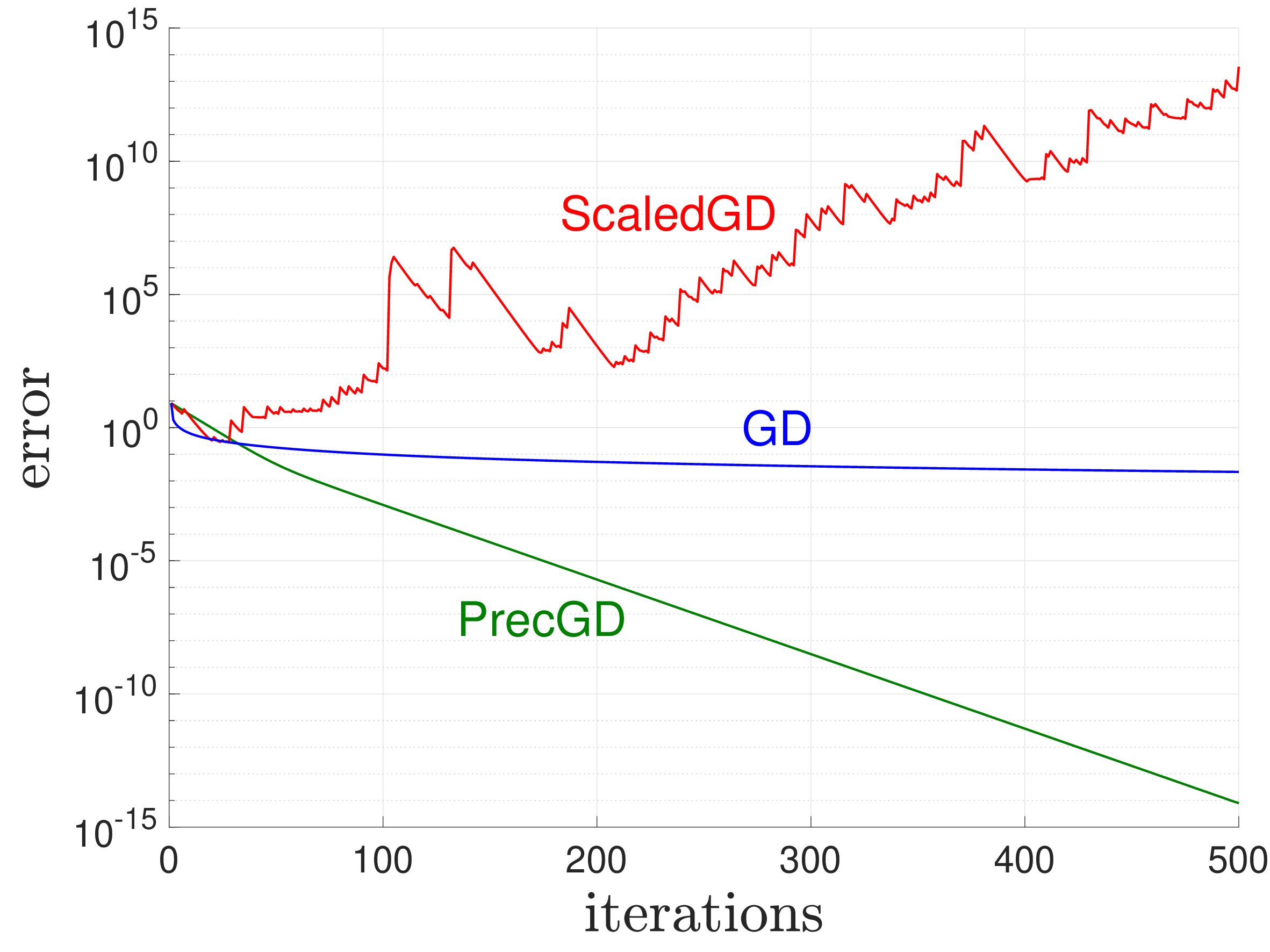} 
 \caption{\textbf{PrecGD converges linearly in the overparameterized regime.}
Convergence of regular gradient descent (GD), ScaledGD and PrecGD
for noiseless matrix sensing (with data taken from~\citep{NEURIPS2018_f8da71e5,zhang2019sharp})
from the same initial points and using the same learning rate $\alpha=2\times10^{-2}$.
\textbf{(Left $r=r^{*}$)} Set $n=4$ and $r^{*}=r=2$. All three
methods convergence at a linear rate, though GD converges at a slower
rate due to ill-conditioning in the ground truth. \textbf{(Right $r>r^{*}$)}
With $n=4$, $r=4$ and $r^{*}=2$, over-parameterization causes gradient
descent to slow down to a sublinear rate. ScaledGD also behaves sporadically.
Only PrecGD converges linearly to the ground truth.}
\end{figure}

A classical remedy to issues posed by singular matrices is $\ell_{2}$
regularization, in which the singular matrix is made ``less singular''
by adding a small identity perturbation. Applying this idea to ScaledGD
yields the following iterations
\[
X_{k+1}=X_{k}-\alpha\nabla f(X_{k})(X_{k}^{T}X_{k}+\eta_{k}I_{r})^{-1},\tag{PrecGD}
\]
where $\eta_{k}\ge0$ is the \emph{damping} parameter specific to
the $k$-th iteration. There are several interpretations to this scheme,
but the most helpful is to view $\eta$ as a parameter that allows
us to interpolate between ScaledGD (with $\eta=0$) and regular GD
(in the limit $\eta\to\infty)$. In this paper, we prove for matrix
sensing that, if the $k$-th damping parameter $\eta_{k}$ is chosen
within a constant factor of the error
\begin{equation}
C_{\lb}\|X_{k}X_{k}^{T}-M^{\star}\|_{F}\le\eta_{k}\le C_{\ub}\|X_{k}X_{k}^{T}-M^{\star}\|_{F},\quad\text{ where }C_{\lb},C_{\ub}>0\text{ are abs. const.}\label{eq:condition}
\end{equation}
then the resulting iterations are guaranteed to converge linearly,
at a rate that is independent of both over-parameterization and ill-conditioning
in the ground truth $M^{\star}$. With noisy measurements, setting
$\eta_{k}$ to satisfy (\ref{eq:condition}) will allow the iterations
to converge to an error bound that is well-known to be minimax optimal
up to logarithmic factors~\citep{candes2011tight}.

We refer to the resulting iterations (with a properly chosen $\eta_{k}$)
as \emph{preconditioned} gradient descent, or PrecGD for short. For
matrix sensing with noiseless measurements, an optimal $\eta_{k}$
that satisfies the condition (\ref{eq:condition}) is obtained for
free by setting $\eta_{k}=\sqrt{f(X_{k})}$. In the case of noisy
measurements, we show that a good choice of $\eta_{k}$ is available
based on an approximation of the noise variance.

\section{\label{sec:background}Background and Related Work}
\textbf{Notations.} We use $\|\cdot\|_F$ to denote the Frobenius norm of a matrix and $\inner{\cdot}{\cdot}$ is the corresponding inner product. We use $\gtrsim$ to denote an inequality that hides a constant factor. The big-O notation $\tilde{O}$ hides logarithimic factors. The gradient of the objective is denoted by $\nabla f(X) \in \R^{n\times r}.$ The eigenvalues are assumed to be in decreasing order: $\lambda_1\geq \lambda_2 \geq \dots \geq \lambda_r.$ 

The symmetric, linear variant of matrix factorization known as matrix
sensing aims to recover a positive semidefinite, rank-$r^{\star}$
ground truth matrix $M^{\star}$, from a small number $m$ of possibly
noisy measurements 
\[
y=\mathcal{A}(M^{\star})+\epsilon,\qquad\quad\text{where }\mathcal{A}(M^{\star})=[\langle A_{1},M^{\star}\rangle,\langle A_{2},M^{\star}\rangle,\dots,\langle A_{m},M^{\star}\rangle]^{T},
\]
in which $\mathcal{A}$ is a linear measurement operator, and the
length-$m$ vector $\epsilon$ models the unknown measurement noise.
A distinguishing feature of matrix sensing is that $\mathcal{A}$
is assumed to satisfy the \textit{restricted isometry property }\citep{recht2010guaranteed,candes2011tight}.
Throughout this paper, we will always assume that $\AA$ satisfies
RIP with parameters $(2r,\delta)$.
\begin{defn}[RIP]
\label{def:rip}
The linear operator $\mathcal{A}$ satisfies RIP with parameters
$(2r,\delta)$ if there exists constants $0\le\delta<1$ and $m>0$
such that, for every rank-$2r$ matrix $M$, we have 
\[
(1-\delta)\|M\|_{F}^{2}\leq\frac{1}{m}\|\mathcal{A}(M)\|^{2}\leq(1+\delta)\|M\|_{F}^{2}.
\]
\end{defn}

A common approach for matrix sensing is to use a simple algorithm
like gradient descent to minimize the \textit{nonconvex} loss function:
\begin{equation}
f(X)=\frac{1}{m}\norm{y-\mathcal{A}(XX^{T})}=\frac{1}{m}\norm{\mathcal{A}(M^{\star}-XX^{T})+\epsilon}^{2}.\label{eq_l2}
\end{equation}
Recent work has provided a theoretical explanation for the empirical
success of this nonconvex approach. Two lines of work have emerged.

\textbf{Local Guarantees.} One line of work studies gradient descent
initialized inside a neighborhood of the ground truth where $X_{0}X_{0}^{T}\approx M^{\star}$
already holds~\citep{tu2016low,zheng2015convergent,bhojanapalli2016dropping,candes2015phase,ma2021implicit}.
Such an initial point can be found using spectral initialization,
see also~\citep{keshavan2010matrix,candes2015phase,chen2015fast,sun2016guaranteed,netrapalli2014non}.
With exact rank $r=r^{\star}$, previous authors showed that gradient
descent converges at a linear rate~\citep{NIPS2015_32bb90e8,tu2016low}.
In the over-parameterized regime, however, local restricted convexity
no longer holds, so the linear convergence rate is lost. \citet{zhuo2021computational}
showed that while spectral initialization continues to work under
over-parameterization, gradient descent now slows down to a sublinear
rate, but it still converges to a statistical error bound of $\tilde{\mathcal{O}}(\sigma^{2}nr^{\star}/m)$, where $\sigma$ denotes the noise variance.
This is known to be minimax optimal up to logarithmic factors~\citep{candes2011tight}.
In this paper, we prove that PrecGD with a damping parameter $\eta_{k}$
satisfying (\ref{eq:condition}) also converges to an $\tilde{\mathcal{O}}(\sigma^{2}nr^{\star}/m)$
statistical error bound. 

\textbf{Global Guarantees.} A separate line of work \citep{bhojanapalli2016global,li2019non,sun2018geometric,ge2016matrix,ge2017no,chen2017memory,sun2016complete,zhang2019sharp,zhang2021sharp}
established global properties of the landscapes of the nonconvex objective
$f$ in (\ref{eq_l2}) and its variants and showed that local search
methods can converge globally. With exact rank $r=r^{\star}$, \citet{bhojanapalli2016global}
proved that $f$ has no spurious local minima, and that all saddles
points have a strictly negative descent direction (strict saddle property~\citep{ge2015escaping},
see also~\citep{jin2017escape,ge2017no}). In the over-parameterized
regime, however,
we are no longer guaranteed to recover the ground truth in polynomial
time. 

\textbf{Other related work.} Here we mention some other techniques
can be use to solve matrix sensing in the over-parameterized regime.
Classically, matrix factorization was solved via its convex SDP relaxation~\citep{recht2010guaranteed,candes2011tight,meka2009guaranteed,candes2009exact,candes2010power}.
The resulting $\mathcal{O}(n^{3})$ to $\mathcal{O}(n^{6})$ time
complexity~\citep{alizadeh1995interior} limits this technique to
smaller problems, but these guarantees hold without prior knowledge
on the true rank $r^{\star}$. First-order methods, such as ADMM~\citep{wen2010alternating,o2016conic,zheng2020chordal}
and soft-thresholding~\citep{cai2010singular}, can be used to solve
these convex problems with a per-iteration complexity comparable to
nonconvex gradient descent, but they likewise suffer from a sublinear
convergence rate. Local recovery via spectral initialization was originally
proposed for alternating minimization and other projection techniques~\citep{jain2013low,hardt2014fast,meka2009guaranteed,netrapalli2014non,chen2015fast,yi2016fast,soltanolkotabi2019structured}.
These also continue to work, though a drawback here is a higher per-iteration
cost when compared to simple gradient methods. Finally, we mention
a recent result of \citet{li2018algorithmic}, which showed in the
over-parameterized regime that gradient descent with early termination
enjoys an algorithmic regularization effect. 

\section{\label{sec:Fail}Sublinear Convergence of Gradient Descent}

In order to understand how to improve gradient descent in the over-parameterized
regime, we must first understand why existing methods fail. For an
algorithm that moves in a search direction $D$ with step-size $\alpha$,
it is a standard technique to measure the corresponding decrement
in $f$ with a Taylor-like expansion
\begin{equation}
f(X-\alpha D)\leq f(X)-\alpha\underbrace{\langle\nabla f(X),D\rangle}_{\text{linear progress}}+\alpha^{2}\underbrace{(L/2)\|D\|_{F}^{2}}_{\text{inverse step-size}}\label{eq_improve}
\end{equation}
in which $L$ is the usual gradient Lipschitz constant (see e.g.~\citet[Chapter~3]{nocedal2006numerical}).
A good search direction $D$ is one that maximizes the linear progress
$\inner{\nabla f(X)}D$ while also keeping the inverse step-size $(L/2)\|D\|_{F}^{2}$
sufficiently small in order to allow a reasonably large step to be
taken. As we will show in this section, the main issue with gradient
descent in the over-parameterized regime is the first term, namely,
that the linear progress goes down to zero as the algorithm makes
progress towards the solution. 

Classical gradient descent uses the search direction $D=\nabla f(X)$.
Here, a common technique is to bound the linear progress at each iteration
by a condition known as \emph{gradient dominance} (or the Polyak-\L{}ojasiewicz
or PL inequality), which is written as
\begin{align}
\inner{\nabla f(X)}D=\|\nabla f(X)\|_{F}^{2} & \geq\mu(f(X)-f^{\star})\quad\text{ where }\mu>0\text{ and }f^{\star}=\min_X f(X).\label{eq:pl1}
\end{align}
Substituting the inequality (\ref{eq:pl1}) into the Taylor-like expansion
(\ref{eq_improve}) leads to 
\begin{align}
f(X-\alpha D) & \leq f(X)-\alpha\|\nabla f(X)\|_{F}^{2}+\alpha^{2}(L/2)\|\nabla f(X)\|_{F}^{2}\nonumber \\
f(X-\alpha D)-f^{\star} & \le[1-\mu\alpha(1-\alpha L/2)]\cdot(f(X)-f^{\star}).\label{eq:gdconv}
\end{align}
Here, we can always pick a small enough step-size $\alpha$ to guarantee
linear convergence: 
\begin{equation}
Q=1-\mu\alpha+\mu\alpha^{2}L/2<1\implies f(X_{k})-f^{\star}\le Q^{k}[f(X_{0})-f^{\star}].\label{eq:linconv}
\end{equation}
In particular, picking the optimal step-size $\alpha=1/L$ minimizes
the convergence quotient $Q=1-1/(2\kappa)$, where $\kappa=L/\mu$
is the usual \emph{condition number}. This shows that, with an optimal
step-size, gradient descent needs at most $O(\kappa\log(1/\epsilon))$
iterations to find an $\epsilon$-suboptimal $X$. 

Matrix sensing with exact rank $r=r^{\star}$ is easily shown to satisfy
gradient dominance (\ref{eq:pl1}) by manipulating existing results
on (restricted) local strong convexity. In the over-parameterized
case $r>r^{\star}$, however, local strong convexity is lost, and
gradient dominance can fail to hold. Indeed, consider the following
instance of matrix sensing, with true rank $r^{\star}=1$, search
rank $r=2$, and $\AA$ set to the identity
\begin{equation}
f(X)=\|XX^{T}-zz^{T}\|_{F}^{2}\text{ where }X=\begin{bmatrix}1 & 0\\
0 & \xi
\end{bmatrix}\text{ and }z=\begin{bmatrix}1\\
0
\end{bmatrix}.\label{eq:counter1}
\end{equation}
We can verify that $\|\nabla f(X)\|^{2}=4\xi^{2}[f(X)-f^{\star}]$,
and this suggests that $f$ satisfies gradient dominance (\ref{eq:pl1})
with a constant of $\mu\le2\xi^{2}$. But $\xi$ is itself a variable
that goes to zero as the candidate $XX^{T}$ approaches to ground
truth $zz^{T}$. For every fixed $\mu>0$ in the gradient dominance
condition (\ref{eq:pl1}), we can find a counterexample $X$ in (\ref{eq:counter1})
with $\xi<\sqrt{\mu}/2$. Therefore, we must conclude that gradient
dominance fails to hold, because the inequality in (\ref{eq:pl1})
can only hold for $\mu=0$. 

In fact, this same example also shows why classical gradient descent
slows down to a sublinear rate. Applying gradient descent $X_{k+1}=X_{k}-\alpha\nabla f(X_{k})$
with fixed step-size $\alpha$ to (\ref{eq:counter1}) yields a sequence
of iterates of the same form
\begin{align*}
X_{0} & =\begin{bmatrix}1 & 0\\
0 & \xi_{0}
\end{bmatrix}, & X_{k+1} & =\begin{bmatrix}1 & 0\\
0 & \xi_{k+1}
\end{bmatrix}=\begin{bmatrix}1 & 0\\
0 & \xi_{k}-\alpha\xi_{k}^{3}
\end{bmatrix},
\end{align*}
from which we can verify that $f(X_{k+1})=(1-\alpha\xi_{k}^2)^{4}\cdot f(X_{k})$.
As each $k$-th $X_{k}X_{k}^{T}$ approaches $zz^{T}$, the element
$\xi_{k}$ converges towards zero, and the convergence quotient $Q=(1-\alpha\xi_{k}^2)^{4}$
approaches 1. We see a process of diminishing returns: every improvement
to $f$ worsens the quotient $Q$, thereby reducing the progress achievable
in the subsequent step. This is precisely the notion that characterizes
sublinear convergence.

\section{\label{sec:Proposed-Algorithm}Linear Convergence for the Noiseless
Case}

To understand how it is possible make gradient descent converge linearly
in the over-parameterized regime, we begin by considering gradient
method under a \emph{change of metric}. Let $\P$ be a real symmetric,
positive definite $nr\times nr$ matrix. We define a corresponding
$P$-inner product, $P$-norm, and dual $P$-norm on $\R^{n\times r}$
as follows 
\begin{align*}
\inner XY_{P} & \eqdef\vect(X)^{T}\P\vect(Y), & \|X\|_{P} & \eqdef\sqrt{\inner XX_{P}}, & \|X\|_{P*} & \eqdef\sqrt{\vect(X)^{T}\P^{-1}\vect(X)},
\end{align*}
where $\vect:\R^{n\times r}\to\R^{nr}$ is the usual column-stacking
operation. Consider descending in the direction $D$ satisfying $\vect(D)=\P^{-1}\vect(\nabla f(X))$;
the resulting decrement in $f$ can be quantified by a $P$-norm analog
of the Taylor-like expansion (\ref{eq_improve}) 
\begin{align}
f(X-\alpha D) & \leq f(X)-\alpha\langle\nabla f(X),D\rangle+\alpha^{2}(L_{P}/2)\|D\|_{P}^{2}\label{eq_improve-1}\\
 & =f(X)-\alpha(1-\alpha(L_{P}/2))\|\nabla f(X)\|_{P*}^{2}
\end{align}
where $L_{P}$ is a $P$-norm gradient Lipschitz constant. If we can
demonstrate gradient dominance under the dual $P$-norm,
\begin{equation}
\|\nabla f(X)\|_{P*}^{2}\geq\mu_{P}(f(X)-f^{\star})\quad\text{ where }\mu_{P}>0\text{ and }f^{\star}=\min f(X),\label{eq:gd-P}
\end{equation}
then we have the desired linear convergence
\begin{align}
f(X-\alpha D)-f^{\star} & \le[1-\mu_{P}\alpha(1-\alpha L_{P}/2)]\cdot(f(X)-f^{\star})\label{eq:gdconv-1}\\
 & =[1-1/(2\kappa_{P})]\cdot(f(X)-f^{\star})\text{ with }\alpha=1/L_{P},\label{eq:cond}
\end{align}
in which the condition number $\kappa_{P}=L_{P}/\mu_{P}$ should be
upper-bounded. To make the most progress per iteration, we want to
pick a metric $\P$ to make the condition number $\kappa_{P}$ as
small as possible.

The best choice of $\P$ for the fastest convergence is simply the
Hessian $\nabla^{2}f(X)$ itself, but this simply recovers Newton's
method, which would force us to invert a large $nr\times nr$ matrix
to compute the search direction $D$ at every iteration. Instead,
we look for a \emph{preconditioner} $\P$ that is cheap to apply while
still assuring a relatively small condition number $\kappa_{P}$.
The following choice is particularly interesting (the Kronecker product $\otimes$ is defined to satisfy $\vect(AXB^T)=(B\otimes A) \vect(X)$)
\begin{align*}
\P & =(X^{T}X+\eta I_{r})\otimes I_{n}=X^{T}X\otimes I_{n}+\eta I_{nr},
\end{align*}
because the resulting $D=\nabla f(X)(X^{T}X+\eta I)^{-1}$ allow us to
\emph{interpolate} between regular GD and the ScaledGD of \citet{tong2020accelerating}.
Indeed, we recover regular GD in the limit $\eta\to\infty$, but as
we saw in Section~\ref{sec:Fail}, gradient dominance (\ref{eq:gd-P})
fails to hold, so the condition number $\kappa_{P}=L_{P}/\mu_{P}$
grows unbounded as $\mu_{P}\to0$. Instead, setting $\eta=0$ recovers
ScaledGD. The key insight of \citet{tong2020accelerating} is that
under this choice of $\P$, gradient dominance (\ref{eq:gd-P}) is
guaranteed to hold, with a large value of $\mu_{P}$ that is independent
of the current iterate and the ground truth. But as we will now show,
this change of metric can magnify the Lipschitz constant $L_{P}$
by a factor of $\lambda_{\min}^{-1}(X^{T}X)$, so the condition
number $\kappa_{P}=L_{P}/\mu_{P}$ becomes unbounded in the over-parameterized
regime.
\begin{lem}[Lipschitz-like inequality]
\label{lem:lipp}Let $\|D\|_{P}=\|D(X^{T}X+\eta I_r)^{1/2}\|_{F}$. Then
we have
\[
f(X+D)\le f(X)+\inner{\nabla f(X)}D+\frac{1}{2}L_{P}(X,D)\|D\|_{P}^{2}
\]
where
\[
L_{P}(X,D)=2(1+\delta)\left[4+\frac{2\|XX^{T}-M^{\star}\|_{F}+4\|D\|_{P}}{\lambda_{\min}(X^{T}X)+\eta}+\left(\frac{\|D\|_{P}}{\lambda_{\min}(X^{T}X)+\eta}\right)^{2}\right]
\]
\end{lem}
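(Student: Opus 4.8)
The plan is to reduce the statement to a bookkeeping exercise once the loss is expanded around $X$. Write $E\eqdef XX^{T}-M^{\star}$, so that $(X+D)(X+D)^{T}-M^{\star}=E+(XD^{T}+DX^{T})+DD^{T}$, and set $a=\mathcal{A}(E)$, $b=\mathcal{A}(XD^{T}+DX^{T})$, $c=\mathcal{A}(DD^{T})$. In the noiseless case the residual at $X$ is exactly $\mathcal{A}(XX^{T}-M^{\star})=a$, so $f(X+D)=\frac1m\|a+b+c\|^{2}$, $f(X)=\frac1m\|a\|^{2}$, and since $\nabla f(X)$ is the adjoint of $D\mapsto\mathcal{A}(XD^{T}+DX^{T})$ applied to the residual, the cross term $\frac{2}{m}\langle a,b\rangle$ equals $\langle\nabla f(X),D\rangle$ (this is where I would be careful with the factor of $2$). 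Grouping the rest, the Taylor remainder collapses to
\[
f(X+D)-f(X)-\langle\nabla f(X),D\rangle=\tfrac1m\|b+c\|^{2}+\tfrac2m\langle a,c\rangle ,
\]
and it remains to bound these two pieces by $\tfrac12 L_{P}(X,D)\|D\|_{P}^{2}$.

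For the first piece, $b+c=\mathcal{A}\big((X+D)(X+D)^{T}-XX^{T}\big)=\mathcal{A}(XD^{T}+DX^{T}+DD^{T})$, whose argument has column space inside $\mathrm{span}(X,D)$, hence rank at most $2r$; the RIP upper bound gives $\tfrac1m\|b+c\|^{2}\le(1+\delta)\|XD^{T}+DX^{T}+DD^{T}\|_{F}^{2}$, and the triangle inequality gives $\|XD^{T}+DX^{T}+DD^{T}\|_{F}\le 2\|XD^{T}\|_{F}+\|DD^{T}\|_{F}$. The substantive step — and the reason this particular $P$-norm is the right one — is the pair of inequalities $\|XD^{T}\|_{F}\le\|D\|_{P}$ and $\|DD^{T}\|_{F}\le\|D\|_{P}^{2}/(\lambda_{\min}(X^{T}X)+\eta)$. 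Writing $N\eqdef(X^{T}X+\eta I_{r})^{1/2}$ and inserting $N^{-1}N$, the first is $\|XD^{T}\|_{F}=\|(XN^{-1})(DN)^{T}\|_{F}\le\|XN^{-1}\|_{\mathrm{op}}\|D\|_{P}$ with $\|XN^{-1}\|_{\mathrm{op}}^{2}=\|I-\eta N^{-2}\|_{\mathrm{op}}\le1$ because $N^{2}\succeq\eta I_{r}$; the second is $\|DD^{T}\|_{F}\le\|D\|_{F}^{2}$ together with $\|D\|_{F}=\|(DN)N^{-1}\|_{F}\le\|D\|_{P}\|N^{-1}\|_{\mathrm{op}}=\|D\|_{P}/\sqrt{\lambda_{\min}(X^{T}X)+\eta}$. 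Squaring $\|XD^{T}+DX^{T}+DD^{T}\|_{F}\le\|D\|_{P}\big(2+\|D\|_{P}/(\lambda_{\min}(X^{T}X)+\eta)\big)$ then produces precisely the $4$, the $4\|D\|_{P}/(\lambda_{\min}(X^{T}X)+\eta)$, and the $\big(\|D\|_{P}/(\lambda_{\min}(X^{T}X)+\eta)\big)^{2}$ terms in the bracket of $L_{P}$.

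For the second piece I would use Cauchy–Schwarz and then the RIP upper bound on each factor separately — $\tfrac1m\|a\|^{2}\le(1+\delta)\|E\|_{F}^{2}$ (rank $E\le r+r^{\star}\le 2r$) and $\tfrac1m\|c\|^{2}\le(1+\delta)\|DD^{T}\|_{F}^{2}$ (rank $DD^{T}\le r$) — giving $\tfrac2m|\langle a,c\rangle|\le 2(1+\delta)\|XX^{T}-M^{\star}\|_{F}\,\|DD^{T}\|_{F}\le(1+\delta)\|D\|_{P}^{2}\cdot 2\|XX^{T}-M^{\star}\|_{F}/(\lambda_{\min}(X^{T}X)+\eta)$, which supplies the remaining term in the bracket. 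Adding the two bounds and factoring out $(1+\delta)\|D\|_{P}^{2}$ reproduces $\tfrac12 L_{P}(X,D)\|D\|_{P}^{2}$ exactly. I do not expect a genuine obstacle here: every matrix to which $\mathcal{A}$ is applied has rank at most $2r$, so RIP$(2r,\delta)$ suffices throughout and no inner-product RIP variant is needed; the only real content is the two $P$-norm bounds on $\|XD^{T}\|_{F}$ and $\|DD^{T}\|_{F}$, and the rest is the mechanical algebra sketched above, plus keeping the gradient cross-term constant straight.
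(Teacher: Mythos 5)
Your proof is correct and follows essentially the same route as the paper's: expand the quartic exactly, identify the zeroth- and first-order terms, and bound the remainder using RIP together with the two key inequalities $\|XD^{T}\|_{F}\le\|D\|_{P}$ and $\|D\|_{F}^{2}\le\|D\|_{P}^{2}/(\lambda_{\min}(X^{T}X)+\eta)$, which reproduce the stated $L_{P}(X,D)$ exactly. The only (cosmetic) difference is that you apply RIP once to the grouped rank-$2r$ matrix $XD^{T}+DX^{T}+DD^{T}$ and handle the $\langle\mathcal{A}(E),\mathcal{A}(DD^{T})\rangle$ cross term by plain Cauchy--Schwarz, whereas the paper bounds each of the four remainder terms separately via the RIP Cauchy--Schwarz inequality; the resulting constants are identical.
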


\begin{lem}[Bounded gradient]
\label{lem:bndgrad}For the search direction $D=\nabla f(X)(X^{T}X+\eta I)^{-1}$,
we have $\|D\|_{P}^{2}=\|\nabla f(X)\|_{P*}^{2}\le16(1+\delta)f(X)$.
\end{lem}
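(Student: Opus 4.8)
The plan is to establish the stated equality by unwinding the definition of $D$, and then to bound $\|\nabla f(X)\|_{P*}^{2}$ against $f(X)$ directly from RIP. The equality is purely formal: the search direction satisfies $\vect(D)=\P^{-1}\vect(\nabla f(X))$ (this is the same as $D=\nabla f(X)(X^{T}X+\eta I_{r})^{-1}$ via $\vect(AXB^{T})=(B\otimes A)\vect(X)$), and since $\P=(X^{T}X+\eta I_{r})\otimes I_{n}$ is symmetric positive definite,
\[
\|D\|_{P}^{2}=\vect(D)^{T}\P\,\vect(D)=\vect(\nabla f(X))^{T}\P^{-1}\vect(\nabla f(X))=\|\nabla f(X)\|_{P*}^{2},
\]
which in matrix form reads $\|D\|_{P}^{2}=\tr{\nabla f(X)(X^{T}X+\eta I_{r})^{-1}\nabla f(X)^{T}}$. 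It thus remains to bound this quantity by $16(1+\delta)f(X)$.

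To that end I would first record the gradient: assuming as usual that the $A_{i}$ are symmetric (without loss of generality, since only the symmetric matrix $XX^{T}-M^{\star}$ is ever measured), a one-line computation gives $\nabla f(X)=4SX$, where $\mathcal{A}^{*}(v)\eqdef\sum_{i}v_{i}A_{i}$ is the adjoint, $r\eqdef\mathcal{A}(XX^{T}-M^{\star})-\epsilon$ is the residual (so $f(X)=\tfrac1m\|r\|^{2}$), and $S\eqdef\tfrac1m\mathcal{A}^{*}(r)$ is symmetric. Plugging this into the identity above and abbreviating $P_{X}\eqdef X(X^{T}X+\eta I_{r})^{-1}X^{T}$ yields $\|\nabla f(X)\|_{P*}^{2}=16\,\tr{SP_{X}S}$. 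A thin SVD $X=U\Sigma V^{T}$ shows $P_{X}=U\,\mathrm{diag}(\sigma_{j}^{2}/(\sigma_{j}^{2}+\eta))\,U^{T}\preceq UU^{T}=\Pi_{X}$, where the $\sigma_{j}$ are the singular values of $X$ and $\Pi_{X}$ is the orthogonal projector onto its column space; hence $\Pi_{X}-P_{X}\succeq0$, so, since $S^{2}\succeq0$ and the trace of a product of two positive semidefinite matrices is nonnegative,
\[
\|\nabla f(X)\|_{P*}^{2}=16\,\tr{P_{X}S^{2}}\le16\,\tr{\Pi_{X}S^{2}}=16\,\|S\Pi_{X}\|_{F}^{2}.
\]
Thus it remains only to show $\|S\Pi_{X}\|_{F}^{2}\le(1+\delta)f(X)$.

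This last inequality is the one genuinely non-routine step and, I expect, the main obstacle: $S$ is a generic full-rank $n\times n$ matrix, to which RIP does not apply directly. The key observation is that the product $W\eqdef S\Pi_{X}$ has $\rank(W)\le\rank(X)\le r\le 2r$, so RIP \emph{does} apply to $W$, and
\[
\|S\Pi_{X}\|_{F}^{2}=\inner{S}{S\Pi_{X}}=\tfrac1m\inner{r}{\mathcal{A}(W)}\le\tfrac1m\|r\|\,\|\mathcal{A}(W)\|\le\tfrac1m\|r\|\sqrt{(1+\delta)m}\,\|W\|_{F},
\]
using in turn $\Pi_{X}^{2}=\Pi_{X}$ with $S=S^{T}$, the definition of the adjoint, Cauchy--Schwarz in $\R^{m}$, and the upper RIP bound $\|\mathcal{A}(W)\|^{2}\le(1+\delta)m\,\|W\|_{F}^{2}$. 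Cancelling one factor of $\|W\|_{F}=\|S\Pi_{X}\|_{F}$ gives $\|S\Pi_{X}\|_{F}\le\sqrt{(1+\delta)/m}\,\|r\|$, i.e.\ $\|S\Pi_{X}\|_{F}^{2}\le(1+\delta)\cdot\tfrac1m\|r\|^{2}=(1+\delta)f(X)$; chaining the displays then gives $\|D\|_{P}^{2}=\|\nabla f(X)\|_{P*}^{2}\le16(1+\delta)f(X)$. Aside from this step the argument is mechanical, the only points needing care being to keep the constant exactly $16$ (tracking the factor $4$ in $\nabla f(X)=4SX$) and to dispatch the trivial edge cases involving the rank of $X$ (and, when $\eta=0$, the invertibility of $X^{T}X$).
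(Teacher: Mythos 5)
Your proof is correct, and it reaches the same constant $16(1+\delta)$ as the paper, but it is organized rather differently. The paper never writes the gradient explicitly: it works with the variational characterization $\|\nabla f(X)\|_{P*}=\max_{\|Y\|_{P}=1}\inner{\nabla f(X)}Y$, plugs in the directional-derivative identity $\inner{\nabla f(X)}Y=2\inner{\AA(XY^{T}+YX^{T})}{\AA(E)}$, applies Cauchy--Schwarz in $\R^{m}$ and then RIP to the \emph{symmetric} rank-$\le 2r$ test matrix $XY^{T}+YX^{T}$, and finishes with $\max_{\|Y\|_{P}=1}\|XY^{T}\|_{F}=\sigma_{\max}(XP^{-1/2})\le1$. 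You instead compute $\nabla f(X)=4SX$ explicitly, reduce to $16\,\tr{SP_{X}S}$, dominate $P_{X}=X(X^{T}X+\eta I)^{-1}X^{T}$ by the column-space projector (which is exactly the paper's $\sigma_{\max}(XP^{-1/2})\le1$ in disguise), and then run the duality-plus-cancellation trick with RIP applied to the rank-$\le r$ but generally \emph{non-symmetric} matrix $S\Pi_{X}$. That last step is fine under the paper's Definition~\ref{def:rip}, which imposes no symmetry on $M$; if one insisted on RIP only over symmetric matrices one would symmetrize $S\Pi_{X}$ first (harmless, since $\|\mathrm{sym}(W)\|_{F}\le\|W\|_{F}$ and $\AA$ only sees the symmetric part when the $A_{i}$ are symmetric), or simply revert to the paper's test-direction formulation, which sidesteps the issue. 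The two arguments are equivalent in substance --- your cancellation of one factor of $\|S\Pi_{X}\|_{F}$ is precisely the dual-norm maximization in primal form --- and yours has the minor bonus of covering the noisy objective verbatim, since the residual $r$ absorbs $\epsilon$ and $f(X)=\tfrac1m\|r\|^{2}$ either way.
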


The proofs of Lemma~\ref{lem:lipp} and Lemma~\ref{lem:bndgrad}
follows from straightforward linear algebra, and can be found in the
Appendix. Substituting Lemma~\ref{lem:bndgrad} into Lemma~\ref{lem:lipp},
we see for ScaledGD (with $\eta=0$) that the Lipschitz-like constant
is bounded as follows
\begin{equation}
L_{P}(X,D)\lesssim\left(\|XX^{T}-M^{\star}\|_{F}/\lambda_{\min}(X^{T}X)\right)^{2}.\label{eq:Lipbnd}
\end{equation}
In the exact rank case $r=r^{\star}$, the distance of $X$ from singularity
can be lower-bounded, within a ``good'' neighborhood of the ground
truth, since $\lambda_{\min}(X^{T}X)=\lambda_{r}(X^{T}X)$ and
\begin{equation}
\|XX^{T}-M^{\star}\|_{F}\le\rho\lambda_{r}(M^{\star}),\quad\rho<1\implies\lambda_{r}(X^{T}X)\ge(1-\rho)\lambda_{r}(M^{\star})>0.\label{eq:nei1}
\end{equation}
Within this ``good'' neighborhood, substituting (\ref{eq:nei1})
into (\ref{eq:Lipbnd}) yields a Lipschitz constant $L_{P}$ that
depends only on the radius $\rho$. The resulting iterations converge
rapidly, independent of any ill-conditioning in the model $XX^{T}$
nor in the ground-truth $M^{\star}$. In turn, ScaledGD can be initialized
within the good neighborhood using spectral initialization (see
Proposition~\ref{prop:spectinit} below). 

In the over-parameterized case $r>r^{\star}$, however, the iterate
$X$ must become singular in order for $XX^{T}$ to converge to $M^{\star}$,
and the radius of the ``good'' neighorhood reduces to zero. The
ScaledGD direction guarantees a large linear progress no matter how
singular $X$ may be, but the method may not be able to take a substantial
step in this direction if $X$ becomes singular too quickly. To illustrate:
the algorithm would fail entirely if it lands at on a point where
$\lambda_{\min}(X^{T}X)=0$ but $XX^{T}\not=M^{\star}$.

While regular GD struggles to make the smallest eigenvalues of $XX^{T}$
converge to zero, ScaledGD gets in trouble by making these eigenvalues
converge quickly. In finding a good mix between these two methods,
an intuitive idea is to use the damping parameter $\eta$ to control
the rate at which $X$ becomes singular. More rigorously, we can pick
an $\eta\approx\|XX^{T}-ZZ^{T}\|_{F}$ and use Lemma~\ref{lem:lipp}
to keep the Lipschitz constant $L_{P}$ bounded. Substituting Lemma~\ref{lem:bndgrad}
into Lemma~\ref{lem:lipp} and using RIP to upper-bound $f(X)\le(1+\delta)\|XX^{T}-M^{\star}\|_{F}^{2}$
and $\delta\le1$ yields
\begin{equation}
\eta\ge C_{\lb}\|XX^{T}-ZZ^{T}\|_{F}\implies L_{P}(X,D)\le16+136/C_{\lb}+256/C_{\lb}^{2}.\label{eq:lipbnd}
\end{equation}
However, the gradient dominance condition (\ref{eq:gd-P}) will necessarily
fail if $\eta$ is set too large. Our main result in this paper is
that keeping $\eta$ within the same order of magnitude as the error
norm $\|XX^{T}-ZZ^{T}\|_{F}$ is enough to maintain gradient dominance.
The following is the noiseless version of this result.
\begin{thm}[Noiseless gradient dominance]
\label{thm:pl}Let $\min_{X}f(X)=0$ for $M^{\star}\ne0$. Suppose
that $X$ satisfies $f(X)\le\rho^{2}\cdot(1-\delta)\lambda_{r^{\star}}^{2}(M^{\star})$
with radius $\rho>0$ that satisfies $\rho^{2}/(1-\rho^{2})\le(1-\delta^{2})/2$.
Then, we have
\[
\eta\le C_{\ub}\|XX^{T}-ZZ^{T}\|_{F}\quad\implies\quad\|\nabla f(X)\|_{P*}^{2}\ge2\mu_{P}f(X)
\]
where
\begin{equation}
\mu_{P}=\left(\sqrt{\frac{1+\delta^{2}}{2}}-\delta\right)^{2}\cdot\min\left\{ \left(\frac{C_{\ub}}{\sqrt{2}-1}\right)^{-1},\left(1+3C_{\ub}\sqrt{\frac{(r-r^{\star})}{1-\delta^{2}}}\right)^{-1}\right\}. \label{eq:mueq}
\end{equation}
\end{thm}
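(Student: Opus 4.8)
The plan is to establish the dual--norm gradient dominance $\|\nabla f(X)\|_{P*}^{2}\ge 2\mu_{P}f(X)$ by a Cauchy--Schwarz lower bound on $\|\nabla f(X)\|_{P*}=\|\nabla f(X)(X^{T}X+\eta I_{r})^{-1/2}\|_{F}$ against a carefully designed test matrix. Write $E\eqdef XX^{T}-M^{\star}=XX^{T}-ZZ^{T}$ and $R\eqdef(X^{T}X+\eta I_{r})^{1/2}$, and recall that $\nabla f(X)$ is a fixed multiple of $\tfrac1m\AA^{*}\AA(E)X$. For any test matrix $W\in\R^{n\times r}$,
\[
\|\nabla f(X)R^{-1}\|_{F}\ \gtrsim\ \frac{\langle E,\,XR^{-1}W^{T}\rangle-\delta\|E\|_{F}\|XR^{-1}W^{T}\|_{F}}{\|W\|_{F}},
\]
using the RIP of $\AA$ on the low--rank matrices involved. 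Since $f(X)\le(1+\delta)\|E\|_{F}^{2}$, it then suffices to exhibit a $W$ whose numerator is $\gtrsim\|E\|_{F}^{2}$ while $\|W\|_{F}$ and $\|XR^{-1}W^{T}\|_{F}$ stay $O(\|E\|_{F})$ with the constants of \eqref{eq:mueq}; the prefactor $\big(\sqrt{(1+\delta^{2})/2}-\delta\big)^{2}$ will emerge from this RIP bookkeeping after the split of $E$ into its $\mathrm{col}(X)$ and orthogonal parts is optimized.

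To construct $W$, I would use the thin SVD $X=U\Sigma V^{T}$ and split the left singular vectors into a \emph{signal} block $U_{1}\in\R^{n\times r^{\star}}$ (carrying the $r^{\star}$ largest singular values) and a \emph{residual} block $U_{2}\in\R^{n\times(r-r^{\star})}$. In the good neighborhood one has $\|E\|_{F}^{2}\le f(X)/(1-\delta)\le\rho^{2}\lambda_{r^{\star}}^{2}(M^{\star})$, so Weyl's inequality gives $\sigma_{i}^{2}(X)\ge(1-\rho)\lambda_{r^{\star}}(M^{\star})$ for $i\le r^{\star}$ and $\sigma_{i}^{2}(X)\le\|E\|_{F}$ for $i>r^{\star}$; together with $\eta\le C_{\ub}\|E\|_{F}$ this pins down the damping ratio $\eta/(\sigma_{i}^{2}(X)+\eta)$ on each block. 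The test matrix is taken of the form $W=EU\Phi V^{T}$ with $\Phi=\mathrm{diag}(\phi_{1},\dots,\phi_{r})$ a block--wise reweighting: on the signal block one sets $\phi_{i}=(\sigma_{i}^{2}+\eta)/\sigma_{i}^{2}$, so that $XR^{-1}W^{T}$ reproduces the signal--signal block of $E$ exactly with only the bounded ScaledGD--type amplification $1+\eta/\sigma_{r^{\star}}^{2}(X)\le1+C_{\ub}\rho/(1-\rho)$; on the residual block one keeps $\phi_{i}$ at a small constant so its contribution to $\|W\|_{F}$ is benign. The part of $M^{\star}$ outside $\mathrm{col}(X)$, namely $\Pi_{X}^{\perp}M^{\star}=-\Pi_{X}^{\perp}E$, cannot be reached by $XR^{-1}W^{T}$ at all; here the neighborhood geometry --- the signal block of $\mathrm{col}(X)$ nearly contains $\mathrm{col}(M^{\star})$ --- is used to bound $\|\Pi_{X}^{\perp}M^{\star}\|_{F}$ and the associated cross--terms against $\|E\|_{F}$ with a factor $<1$, which is exactly what the radius hypothesis $\rho^{2}/(1-\rho^{2})\le(1-\delta^{2})/2$ provides. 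Since the residual block spans only $r-r^{\star}$ directions, the Frobenius norm of the pieces of $W$ (and of $XR^{-1}W^{T}$) supported there is at most $\sqrt{r-r^{\star}}$ times their operator norm, and those operator norms are $O(C_{\ub}\|E\|_{F})$ by the $\eta$ and $\sigma_{i}^{2}(X)\le\|E\|_{F}$ bounds; this is the origin of the $\sqrt{(r-r^{\star})/(1-\delta^{2})}$ term in \eqref{eq:mueq}.

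Combining, the numerator is $\gtrsim\big(\sqrt{(1+\delta^{2})/2}-\delta\big)\|E\|_{F}^{2}$ (the signal block dominating, by the radius condition), while $\|W\|_{F}^{2}$ is bounded by a constant multiple of $\max\{C_{\ub}/(\sqrt2-1),\,1+3C_{\ub}\sqrt{(r-r^{\star})/(1-\delta^{2})}\}\,\|E\|_{F}^{2}$, the two terms of the maximum corresponding to whether the near--singular directions are charged through the damping ratio $\eta/\sigma_{r^{\star}}^{2}$ or through the residual--block dimension count; feeding these into the displayed bound and using $f(X)\le(1+\delta)\|E\|_{F}^{2}$ yields $\mu_{P}$ as stated (recalling $\min\{a^{-1},b^{-1}\}=(\max\{a,b\})^{-1}$). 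The main obstacle I anticipate is precisely the tension built into the test matrix: making the numerator $\Omega(\|E\|_{F}^{2})$ forces one to recover a definite fraction of $E$ through $\mathrm{col}(X)$, which naively puts the factor $(\sigma_{i}^{2}+\eta)/\sigma_{i}^{2}$ onto the near--singular residual directions and blows up $\|W\|_{F}$; damping those directions instead leaves an unrecovered piece of $E$ which, together with the genuinely unreachable component $\Pi_{X}^{\perp}M^{\star}$, must be absorbed by the neighborhood geometry --- and it is the simultaneous control of these competing error terms that forces both the radius condition $\rho^{2}/(1-\rho^{2})\le(1-\delta^{2})/2$ and the precise form of $\mu_{P}$.
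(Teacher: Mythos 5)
Your overall architecture matches the paper's: an RIP-preserved Cauchy--Schwarz lower bound on $\|\nabla f(X)\|_{P*}$ against a test direction built from $E$ and the SVD of $X$ (the paper's Lemmas~\ref{lem:reform} and~\ref{lem:scaled}), a basis-alignment lemma controlling $\|\Pi_X^{\perp}M^{\star}\|_F$ via the radius condition (Lemma~\ref{lem:sintheta}), Weyl's inequality, and the $\sqrt{r-r^{\star}}$ rank factor. However, there is a genuine gap at the central step. You claim the numerator is $\gtrsim\bigl(\sqrt{(1+\delta^{2})/2}-\delta\bigr)\|E\|_F^2$ ``the signal block dominating, by the radius condition.'' This is false: the radius condition controls the part of $M^{\star}$ outside the top-$r^{\star}$ eigendirections of $XX^T$, but it does not prevent $E$ from being dominated by the residual $R=\sum_{i>r^{\star}}\lambda_i u_iu_i^T$. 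The paper's own example \eqref{eq:counter1} has $E=\mathrm{diag}(0,\xi^{2})$ supported entirely on the residual direction, so your signal block contributes exactly zero to the numerator no matter how small $f(X)$ is. The bound must then come from the residual block, where a uniform constant weight $\phi_i$ gets multiplied by the damping ratio $\lambda_i/(\lambda_i+\eta)$, which can be arbitrarily small on individual residual directions; your sketch does not show that the directions carrying a definite fraction of $\|E\|_F^2$ are exactly those whose damping ratio is bounded below. You correctly name this tension in your last paragraph as ``the main obstacle I anticipate,'' but you do not resolve it, and resolving it is the entire content of the theorem.

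The paper's resolution is an induction over the cutoff index $k$, starting at $k=r^{\star}$. At each level one asks whether $\cos\theta_k\ge\mu$, where $\sin\theta_k=\|(I-U_kU_k^T)E(I-U_kU_k^T)\|_F/\|E\|_F$. If yes, Lemma~\ref{lem:scaled} gives gradient dominance with the damping ratio $\eta/\lambda_k$ already controlled by the previous step. If no, then $\sin\theta_k$ is large; since basis alignment bounds the $M^{\star}$ contribution to the unreachable block by $\rho\|E\|_F/\sqrt{2(1-\rho^{2})}$, the residual $R_k$ must satisfy $\|R_k\|_F\gtrsim\sqrt{1-\mu^{2}}\,\|E\|_F$, and the rank bound $\|R_k\|_F\le\sqrt{r-k}\,\lambda_{k+1}$ then forces $\lambda_{k+1}\gtrsim\|E\|_F/\sqrt{r-k}$, i.e.\ $\eta/\lambda_{k+1}\le 3C_{\ub}\sqrt{(r-r^{\star})/(1-\mu^{2})}$. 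One recurses until $k=r$, where $\cos\theta_r\ge\mu$ is guaranteed outright by the radius condition. Note that the rank factor $\sqrt{r-k}$ is used here to \emph{lower-bound an eigenvalue} from the failure of alignment at the previous level---not, as in your sketch, to upper-bound $\|W\|_F$. Without this cascading mechanism (or an equivalently careful per-direction weighting that you would have to construct and analyze), the two-block test matrix does not yield the claimed $\mu_P$.
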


The proof of Theorem~\ref{thm:pl} is involved and 
we defer the details to the Appendix. In the noiseless case, we get
a good estimate of $\eta$ for free as a consequence of RIP:
\[
\eta=\sqrt{f(X)}\implies\sqrt{1-\delta}\|XX^{T}-M^{\star}\|_{F}\le\eta\le\sqrt{1+\delta}\|XX^{T}-M^{\star}\|_{F}.
\]
Repeating (\ref{eq_improve-1})-(\ref{eq:cond}) with Lemma~\ref{lem:lipp},
(\ref{eq:lipbnd}) and (\ref{eq:mueq}) yields our main result below. 

\begin{cor}[Linear convergence]
\label{cor:main}Let $X$ satisfy the same initial conditions as
in Theorem \ref{thm:pl}. The search direction $D=\nabla f(X)(X^{T}X+\eta I)^{-1}$
with damping parameter $\eta=\sqrt{f(X)}$ and step-size $\alpha\le1/L_{P}$
yields 
\[
f(X-\alpha D)\le\left(1-\alpha\mu_{P}/2\right)f(X)
\]
where $L_{P}$ is as in (\ref{eq:lipbnd}) with $C_{\lb}=\sqrt{1-\delta}$
and $\mu_{P}$ is as in (\ref{eq:mueq}) with $C_{\ub}=\sqrt{1+\delta}$.
\end{cor}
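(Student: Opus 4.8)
The plan is to discharge the abstract damping condition (\ref{eq:condition}) using the concrete choice $\eta=\sqrt{f(X)}$, and then to chain Lemma~\ref{lem:lipp}, Lemma~\ref{lem:bndgrad} and Theorem~\ref{thm:pl} exactly as in the derivation (\ref{eq_improve-1})--(\ref{eq:cond}). The first step is a two-sided RIP estimate for this $\eta$: in the noiseless case $f(X)=\tfrac1m\|\mathcal A(XX^{T}-M^{\star})\|^{2}$, and since $XX^{T}-M^{\star}$ has rank at most $r+r^{\star}\le 2r$, Definition~\ref{def:rip} gives $\sqrt{1-\delta}\,\|XX^{T}-M^{\star}\|_{F}\le \eta=\sqrt{f(X)}\le\sqrt{1+\delta}\,\|XX^{T}-M^{\star}\|_{F}$. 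Hence the single choice $\eta=\sqrt{f(X)}$ simultaneously satisfies the lower-bound hypothesis of (\ref{eq:lipbnd}) with $C_{\lb}=\sqrt{1-\delta}$ and the upper-bound hypothesis of Theorem~\ref{thm:pl} with $C_{\ub}=\sqrt{1+\delta}$.

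Next I would apply Lemma~\ref{lem:lipp} at $X$ with the perturbation $-\alpha D$. Writing $g\eqdef\|\nabla f(X)\|_{P*}^{2}$ and using the Kronecker identity $\P^{-1}=(X^{T}X+\eta I_{r})^{-1}\otimes I_{n}$ (the same identity behind Lemma~\ref{lem:bndgrad}), one has $\langle\nabla f(X),D\rangle=\|D\|_{P}^{2}=g$, so the lemma gives
\[
f(X-\alpha D)\le f(X)-\alpha\Bigl(1-\tfrac{\alpha}{2}L_{P}(X,-\alpha D)\Bigr)g.
\]
Since $C_{\lb}=\sqrt{1-\delta}\le 1$, the constant $L_{P}$ in (\ref{eq:lipbnd}) is at least $16>1$, so $\alpha\le 1/L_{P}<1$; then $\|{-\alpha D}\|_{P}=\alpha\|D\|_{P}\le\|D\|_{P}$, and because the expression $L_{P}(X,\cdot)$ in Lemma~\ref{lem:lipp} is increasing in the $P$-norm of its second argument, $L_{P}(X,-\alpha D)\le L_{P}(X,D)$. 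Feeding Lemma~\ref{lem:bndgrad}, the RIP bound $f(X)\le(1+\delta)\|XX^{T}-M^{\star}\|_{F}^{2}$, $\delta\le 1$, and the lower bound $\eta\ge\sqrt{1-\delta}\|XX^{T}-M^{\star}\|_{F}$ into the chain of (\ref{eq:lipbnd}) then bounds this by the stated $L_{P}$. With the prescribed $\alpha\le 1/L_{P}$ the bracket is at least $\tfrac12$, so $f(X-\alpha D)\le f(X)-\tfrac{\alpha}{2}g$.

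Finally, Theorem~\ref{thm:pl} applies: its initial-condition hypotheses are inherited verbatim, and its damping condition $\eta\le C_{\ub}\|XX^{T}-M^{\star}\|_{F}$ holds with $C_{\ub}=\sqrt{1+\delta}$ by the first step, so $g\ge 2\mu_{P}f(X)$ with $\mu_{P}$ as in (\ref{eq:mueq}). Substituting yields $f(X-\alpha D)\le(1-\alpha\mu_{P})f(X)\le(1-\alpha\mu_{P}/2)f(X)$, which is the claim (in fact with a slightly sharper constant than stated). I do not anticipate a genuine obstacle here: all the analytic difficulty is already absorbed into Lemma~\ref{lem:lipp} and Theorem~\ref{thm:pl}. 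The only points deserving care are the self-bounding step that controls $L_{P}(X,-\alpha D)$ by the uniform constant $L_{P}$ via $\alpha\le 1$ and monotonicity, and the observation that $\eta=\sqrt{f(X)}$ must play a double role — large enough to keep $L_{P}$ bounded in (\ref{eq:lipbnd}) yet small enough to preserve gradient dominance in Theorem~\ref{thm:pl} — which is exactly what the two-sided RIP estimate delivers.
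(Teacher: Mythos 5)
Your proposal is correct and follows essentially the same route the paper indicates (it explicitly says the corollary is obtained by repeating (\ref{eq_improve-1})--(\ref{eq:cond}) with Lemma~\ref{lem:lipp}, (\ref{eq:lipbnd}) and (\ref{eq:mueq}), after noting that RIP sandwiches $\eta=\sqrt{f(X)}$ between $\sqrt{1-\delta}\,\|XX^{T}-M^{\star}\|_{F}$ and $\sqrt{1+\delta}\,\|XX^{T}-M^{\star}\|_{F}$). Your added care in bounding $L_{P}(X,-\alpha D)\le L_{P}(X,D)$ via $\alpha\le 1$ and monotonicity is a detail the paper leaves implicit, but it is the right justification.
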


For a fixed RIP constant $\delta$, Corollary~\ref{cor:main} says
that PrecGD converges at a linear rate that is independent of the
current iterate $X$, and also independent of possible ill-conditioning
in the ground truth. However, it does require an initial point $X_{0}$
that satisfies
\begin{align}
\|\mathcal{A}(X_{0}X_{0}^{T}-M^{*})\|^{2} & <\rho^{2}(1-\delta)\lambda_{r^{*}}(M^{\star})^{2}\label{eq:radius}
\end{align}
with a radius $\rho>0$ satisfying $\rho^{2}/(1-\rho^{2})\le(1-\delta^{2})/2.$
Such an initial point can be found using spectral initialization,
even if the measurements are tainted with noise. Concretely, we choose
the initial point $X_{0}$ as 
\begin{equation}
X_{0}=\mathcal{P}_{r}\left(\frac{1}{m}\sum_{i=1}^{m}y_{i}A_{i}\right)\text{ where }\mathcal{P}_{r}(M)=\arg\min_{X\in\R^{n\times r}}\|XX^{T}-M\|_{F},\label{eq:spec}
\end{equation}
where we recall that $y=\mathcal{A}(M^{\star})+\epsilon$ are the
$m$ possibly noisy measurements collected of the ground truth, and
that the rank-$r$ projection operator can be efficiently implemented
with a singular value decomposition. The proof of the following proposition can be found in the appendix. 
\begin{prop}[Spectral Initialization]
\label{prop:spectinit}Suppose that $\delta\leq(8\kappa\sqrt{r^{*}})^{-1}$
and $m\gtrsim\frac{1+\delta}{1-\delta}\frac{\sigma^{2}rn\log n}{\rho^{2}\lambda_{r^{\star}}^{2}(M^{\star})}$
where $\kappa=\lambda_{1}(M^{\star})/\lambda_{r^{\star}}(M^{\star})$.
Then, with high probability, the initial point $X_{0}$ produced by~\eqref{eq:spec}
satisfies the radius condition (\ref{eq:radius}).
\end{prop}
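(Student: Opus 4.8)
The plan is to control the initialization error $\|X_0 X_0^T-M^\star\|_F$ directly; the radius condition \eqref{eq:radius} then follows from RIP. Indeed $X_0 X_0^T-M^\star$ has rank at most $r+r^\star\le 2r$, so Definition~\ref{def:rip} gives $\tfrac1m\|\mathcal A(X_0X_0^T-M^\star)\|^2\le(1+\delta)\|X_0X_0^T-M^\star\|_F^2$, and it suffices to show that, with high probability,
\[
\|X_0X_0^T-M^\star\|_F\le \rho\sqrt{\tfrac{1-\delta}{1+\delta}}\,\lambda_{r^\star}(M^\star).
\]
Write $\hat M\defeq\tfrac1m\sum_{i=1}^m y_iA_i=\tfrac1m\mathcal A^{*}(y)$ (symmetrizing the $A_i$ if needed), so that $X_0X_0^T=\mathcal P_r(\hat M)\mathcal P_r(\hat M)^T$ is the best rank-$r$ PSD approximation of $\hat M$ in Frobenius norm.

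The heart of the argument is a deterministic perturbation bound: for any symmetric $\hat M$ and any PSD $M^\star$ of rank $r^\star\le r$,
\[
\|X_0X_0^T-M^\star\|_F\le 2\,\|\hat M-M^\star\|_{(2r)},\qquad \|A\|_{(2r)}\defeq\max_{\rank(W)\le 2r,\ \|W\|_F\le1}\langle A,W\rangle .
\]
To prove it, I would let $P$ be the orthogonal projector onto $S=\mathrm{span}(v_1,\dots,v_r)+\mathrm{col}(M^\star)$, where $v_1,\dots,v_r$ are leading eigenvectors of $\hat M$; then $\dim S\le 2r$, $PX_0X_0^TP=X_0X_0^T$, and $PM^\star P=M^\star$. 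Because each $v_j$ ($j\le r$) is still an eigenvector of $P\hat MP$ with the same eigenvalue, while every eigenvalue of $P\hat MP$ beyond the $r$-th is at most $\lambda_{r+1}(\hat M)$, the matrix $X_0X_0^T$ remains the best rank-$r$ PSD approximation of $P\hat MP$. Comparing against the feasible competitor $M^\star$ gives $\|X_0X_0^T-P\hat MP\|_F\le\|M^\star-P\hat MP\|_F$, and the triangle inequality together with $\|P(\hat M-M^\star)P\|_F\le\|\hat M-M^\star\|_{(2r)}$ finishes the claim.

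It then remains to bound $\|\hat M-M^\star\|_{(2r)}$, and I would split $\hat M-M^\star=E_1+E_2$ with $E_1\defeq\tfrac1m(\mathcal A^{*}\mathcal A-I)(M^\star)$ (the RIP bias) and $E_2\defeq\tfrac1m\mathcal A^{*}(\epsilon)$ (the noise). Writing a rank-$2r$ test matrix as a sum of two rank-$r$ pieces and invoking the standard polarization consequence of RIP$(2r,\delta)$ yields $\|E_1\|_{(2r)}\le\sqrt2\,\delta\|M^\star\|_F\le\sqrt2\,\delta\sqrt{r^\star}\,\lambda_1(M^\star)=\sqrt2\,\delta\kappa\sqrt{r^\star}\,\lambda_{r^\star}(M^\star)$, a small constant multiple of $\lambda_{r^\star}(M^\star)$ once $\delta\le(8\kappa\sqrt{r^\star})^{-1}$. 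For the noise I would use $\|E_2\|_{(2r)}\le\sqrt{2r}\,\|E_2\|_{\mathrm{op}}$ together with a standard concentration bound for the random matrix $\tfrac1m\sum_i\epsilon_iA_i$ under the (sub-)Gaussian sensing model, namely $\|E_2\|_{\mathrm{op}}\lesssim\sigma\sqrt{n\log n/m}$ with high probability (and $\|\epsilon\|^2\lesssim m\sigma^2$); hence $\|E_2\|_{(2r)}\lesssim\sqrt{r}\,\sigma\sqrt{n\log n/m}$, which drops below any prescribed constant fraction of $\lambda_{r^\star}(M^\star)$ once $m\gtrsim\tfrac{1+\delta}{1-\delta}\cdot\tfrac{\sigma^2 rn\log n}{\rho^2\lambda_{r^\star}^2(M^\star)}$. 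Combining the three estimates — using that the admissible $\rho$ is bounded away from $0$ by the hypothesis $\rho^2/(1-\rho^2)\le(1-\delta^2)/2$, which supplies the slack — yields the displayed Frobenius bound.

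The step I expect to be delicate is the $r$-versus-$r^\star$ accounting in the deterministic bound. A crude estimate $\|X_0X_0^T-M^\star\|_F\le 2\sqrt{2r}\,\|\hat M-M^\star\|_{\mathrm{op}}$ would make the bias contribution $\sqrt{r}\,\delta\kappa\sqrt{r^\star}\,\lambda_{r^\star}(M^\star)$, which $\delta\le(8\kappa\sqrt{r^\star})^{-1}$ fails to control when $r$ is large; the reason for routing through $\|\cdot\|_{(2r)}$ and bounding $\|E_1\|_{(2r)}$ via RIP rather than via the operator norm is precisely to leave only $\sqrt{r^\star}$ on the bias and confine the unavoidable $\sqrt r$ to the noise term, where the sample-complexity hypothesis absorbs it. The remaining care is the eigenvalue bookkeeping certifying that $X_0X_0^T$ is still the best rank-$r$ PSD approximation after compression to $S$, and the operator-norm concentration for $\tfrac1m\sum_i\epsilon_iA_i$, which is design-dependent rather than a consequence of RIP alone.
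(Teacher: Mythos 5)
Your proposal follows essentially the same route as the paper: reduce the radius condition via RIP to a Frobenius bound on $X_0X_0^T-M^\star$, control it by a restricted (rank-$2r$) norm of $\hat M-M^\star$, split that into the RIP bias $\frac1m\mathcal A^{*}\mathcal A(M^\star)-M^\star$ (bounded by $\delta\|M^\star\|_F\le\delta\sqrt{r^\star}\,\kappa\,\lambda_{r^\star}(M^\star)$) and the noise $\frac1m\mathcal A^{*}(\epsilon)$ (bounded by $\sqrt{2r}$ times the operator-norm concentration of $\frac1m\sum_i\epsilon_iA_i$, i.e.\ Lemma~\ref{lem:AE}), and absorb the two pieces using the hypotheses on $\delta$ and $m$. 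Where you genuinely improve on the paper is the deterministic Eckart--Young-type step: the paper simply asserts $\|X_0X_0^T-\hat M\|_{F,2r}\le\|\hat M-M^\star\|_{F,2r}$, which does not follow from optimality of $X_0$ in the \emph{unrestricted} Frobenius norm alone; your compression onto $S=\mathrm{span}(v_1,\dots,v_r)+\mathrm{col}(M^\star)$, with the interlacing argument certifying that $X_0X_0^T$ remains the best rank-$r$ PSD approximation of $P\hat MP$, supplies the missing justification and is the right way to keep only $\sqrt{r^\star}$ on the bias term.

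One slip in your final assembly: the condition $\rho^{2}/(1-\rho^{2})\le(1-\delta^{2})/2$ is an \emph{upper} bound on $\rho$ and does not bound $\rho$ away from zero, so it cannot "supply the slack" you invoke. To make the bias contribution $\approx\delta\kappa\sqrt{r^\star}\,\lambda_{r^\star}(M^\star)$ fit inside $\rho\,\lambda_{r^\star}(M^\star)$ you need $\delta\lesssim\rho/(\kappa\sqrt{r^\star})$, not merely $\delta\le(8\kappa\sqrt{r^\star})^{-1}$; this is exactly what the paper's own proof quietly uses (it takes $\delta\le\rho/(8\sqrt{r^\star}\kappa)$, strengthening the hypothesis stated in the proposition). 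So this is a shared constant-tracking issue rather than a flaw unique to your argument, but your stated reason for dismissing it is incorrect and should be replaced by the strengthened hypothesis on $\delta$.
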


However, if the measurements $y$ are noisy, then $\sqrt{f(X)}=\|\AA(XX^{T}-M^{\star})+\varepsilon\|$
now gives a biased estimate of our desired damping parameter $\eta$.
In the next section, we show that a good choice of $\eta_{k}$ is
available based on an approximation of the noise variance.

\section{Extension to Noisy Setting}

In this section, we extend our analysis to the matrix sensing with
noisy measurements. Our main goal is to show that, with a proper choice
of the damping coefficient $\eta$, the proposed algorithm converges
linearly to an ``optimal'' estimation error.
\begin{thm}[Noisy measurements with optimal $\eta$]
\label{thm_oracle} Suppose that the noise vector $\epsilon\in\mathbb{R}^{m}$
has sub-Gaussian entries with zero mean and variance $\sigma^{2}=\frac{1}{m}\sum_{i=1}^{m}\mathbb{E}[\epsilon_{i}^{2}]$.
Moreover, suppose that $\eta_{k}=\frac{1}{\sqrt{m}}\|\mathcal{A}(X_{k}X_{k}^{T}-M^{*})\|$,
for $k=0,1,\dots,K$, and that the initial point $X_{0}$ satisfies
$\|\mathcal{A}(X_{0}X_{0}^{T}-M^{*})\|^{2}<\rho^{2}(1-\delta)\lambda_{r^{*}}(M^{\star})^{2}$.
Consider $k^{*}=\arg\min_{k}\eta_{k}$, 
and suppose that the step-size
$\alpha \leq 1/L,$ where $L>0$ is a constant that only depends on $\delta$.
Then, with high probability, we have 
\begin{align}
\|X_{k^{*}}X_{k^{*}}^{T}-M^{\star}\|_{F}^{2}\lesssim\max\left\{ \frac{1+\delta}{1-\delta}\left(1-\alpha\frac{\mu_{P}}{2}\right)^{K}\|X_{0}X_{0}^{T}-M^{*}\|_{F}^{2},\mathcal{E}_{stat}\right\} ,
\end{align}
where $\mathcal{E}_{stat}:=\frac{\sigma^{2}nr\log n}{\mu_{P}(1-\delta)m}$. 
\end{thm}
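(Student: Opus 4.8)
The strategy is to run the whole analysis on the \emph{clean} (noiseless) objective $g(X)\eqdef\frac1m\|\mathcal{A}(XX^T-M^\star)\|^2$, for which the prescribed damping is exactly $\eta_k=\sqrt{g(X_k)}$, while treating the actual search direction $D_k=\nabla f(X_k)(X_k^TX_k+\eta_k I_r)^{-1}$ as a noisy perturbation of the clean one. Writing $\nabla f(X)=\nabla g(X)-N(X)$ with $N(X)=\tfrac2m\big(\sum_i\epsilon_iA_i\big)X$, a one-line computation with $\P^{-1}=(X^TX+\eta I)^{-1}\otimes I_n$ gives
\[
\|N(X)\|_{P*}^2=\tr{N(X)(X^TX+\eta I)^{-1}N(X)^T}\le\frac{4r}{m^2}\Bigl\|\sum_i\epsilon_iA_i\Bigr\|_{\mathrm{op}}^2,
\]
since $X^TX(X^TX+\eta I)^{-1}\preceq I_r$ has trace $\le r$. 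A standard sub-Gaussian concentration bound for the sensing ensemble gives $\tfrac1m\|\sum_i\epsilon_iA_i\|_{\mathrm{op}}\lesssim\sigma\sqrt{n\log n/m}$ with high probability, hence $\|N(X)\|_{P*}^2\lesssim\frac{\sigma^2nr\log n}{m}\eqdef\mathcal{E}_0$ uniformly in $X$, and one checks $\mathcal{E}_0\asymp\mu_P(1-\delta)\,\mathcal{E}_{stat}$. This is the only place randomness enters.

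\textbf{A noisy one-step recursion for $g$.}
Because $g$ is itself a noiseless matrix-sensing loss, Lemmas~\ref{lem:lipp} and~\ref{lem:bndgrad} (whose proofs use only the noiseless structure and RIP) apply to it; in particular $\|\nabla g(X)\|_{P*}^2\le16(1+\delta)g(X)$, and, since $\eta=\sqrt{g(X)}$ pinches $\|XX^T-M^\star\|_F$ between $\sqrt{1-\delta}$ and $\sqrt{1+\delta}$ multiples of $\eta$ by RIP, Theorem~\ref{thm:pl} yields the gradient dominance $\|\nabla g(X)\|_{P*}^2\ge2\mu_P g(X)$ with the same $\mu_P$ as in Corollary~\ref{cor:main} (take $C_{\ub}=\sqrt{1+\delta}$). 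Applying Lemma~\ref{lem:lipp} to $g$ along the actual step $-\alpha D_k$, and using $\|\alpha D_k\|_P=\alpha\|\nabla f(X)\|_{P*}\le\alpha(\|\nabla g(X)\|_{P*}+\|N(X)\|_{P*})$, one sees that \emph{as long as $g(X_k)\gtrsim\mathcal{E}_0$} the ratio $\|\alpha D_k\|_P/(\lambda_{\min}(X^TX)+\eta)$ is $O(1)$, so the Lipschitz-like constant $L_P^g(X_k,-\alpha D_k)\le L$ for a constant $L$ depending only on $\delta$ (not on $X_k$ nor on $\kappa(M^\star)$). With $\alpha\le1/L$, combining the descent inequality with $\langle\nabla g,D_k\rangle=\|\nabla g\|_{P*}^2-\langle\nabla g,N\rangle_{P*}\ge\|\nabla g\|_{P*}^2-\|\nabla g\|_{P*}\|N\|_{P*}$, the bound $\|D_k\|_P^2\le2\|\nabla g\|_{P*}^2+2\|N\|_{P*}^2$, a Young split of the cross term, and the gradient dominance gives
\[
g(X_{k+1})\le\Bigl(1-\tfrac{\alpha\mu_P}{2}\Bigr)g(X_k)+C_1\,\alpha\,\mathcal{E}_0,\qquad\text{valid whenever }g(X_k)\gtrsim\mathcal{E}_0.
\]

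\textbf{Invariance, unrolling, and the best iterate.}
The initial assumption places $X_0$ in the neighborhood required by Theorem~\ref{thm:pl} (with $g$ in the role of the noiseless loss), and the sample-complexity hypothesis forces $\mathcal{E}_0/\mu_P\ll\lambda_{r^\star}^2(M^\star)$, exactly as in Proposition~\ref{prop:spectinit}. Let $\mathcal{E}_\star\asymp\mathcal{E}_0/\mu_P$ be the ``equilibrium'' level. If $g(X_k)\ge\mathcal{E}_\star$ for every $k\le K$, then the displayed recursion applies at each step, $g$ is nonincreasing and stays in the good neighborhood, and unrolling gives $g(X_K)\le(1-\alpha\mu_P/2)^Kg(X_0)+C_1\mathcal{E}_0/\mu_P$; otherwise let $k_0\le K$ be the first index with $g(X_{k_0})<\mathcal{E}_\star$, and then we simply stop, since $g(X_{k_0})<\mathcal{E}_\star$ already. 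In either case, since $k^{*}=\arg\min_k\eta_k=\arg\min_k g(X_k)$, we obtain $g(X_{k^{*}})=\min_{k\le K}g(X_k)\lesssim\max\{(1-\alpha\mu_P/2)^Kg(X_0),\,\mathcal{E}_\star\}$. Converting via RIP, $\|X_{k^{*}}X_{k^{*}}^T-M^\star\|_F^2\le\frac1{1-\delta}g(X_{k^{*}})$ and $g(X_0)\le(1+\delta)\|X_0X_0^T-M^\star\|_F^2$, which is the claimed bound with $\mathcal{E}_{stat}=\frac{\sigma^2nr\log n}{\mu_P(1-\delta)m}$.

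\textbf{Main obstacle.}
The prescribed damping $\eta_k=\sqrt{g(X_k)}$ is tuned to the clean residual, which the algorithm never observes; the difficulty is to show that feeding the \emph{noisy} gradient $\nabla f$ into the ScaledGD-type step with this clean-tuned $\eta_k$ still (i) keeps the Lipschitz-like constant of Lemma~\ref{lem:lipp} bounded by a $\delta$-only constant and (ii) retains enough correlation with $\nabla g$ to contract. Both hinge on $\|N(X_k)\|_{P*}$ being small relative to $\eta_k=\sqrt{g(X_k)}$, and both degrade once $g(X_k)$ dips to the noise level $\mathcal{E}_0$ (there the noisy and clean residuals are incomparable) — which is precisely why the theorem certifies the best iterate $k^{*}=\arg\min_k\eta_k$ rather than $X_K$, and why the case split at $\mathcal{E}_\star$ is unavoidable. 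A secondary technicality is the concentration bound $\tfrac1m\|\sum_i\epsilon_iA_i\|_{\mathrm{op}}\lesssim\sigma\sqrt{n\log n/m}$, which needs more than RIP (a distributional assumption on the $A_i$) and is the source of both the ``with high probability'' qualifier and the $n\log n$ factor in $\mathcal{E}_{stat}$.
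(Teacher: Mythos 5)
Your proposal is correct and follows essentially the same route as the paper's proof: decompose the search direction into the clean gradient $\nabla f_c$ plus the noise term $\tfrac2m(\sum_i\epsilon_iA_i)X$, bound the latter in the dual $P$-norm via $\sum_i\sigma_i^2(X)/(\sigma_i^2(X)+\eta)\le r$ and the operator-norm concentration of $\tfrac1m\sum_i\epsilon_iA_i$, invoke Theorem~\ref{thm:pl} for the clean loss, and split on whether $\eta_k$ sits above or below the noise floor before certifying the best iterate $k^{*}$. The only cosmetic difference is that you carry an additive $O(\alpha\mathcal{E}_0)$ term in the recursion and unroll a geometric sum, whereas the paper absorbs the noise terms entirely into the contraction whenever $\eta_k$ exceeds the threshold and terminates the analysis otherwise; both yield the same bound.
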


Assuming fixed parameters for the problem, the above theorem shows
that PrecGD outputs a solution with an estimation error of $\mathcal{O}\left(\mathcal{E}_{stat}\right)$
in $\mathcal{O}\left(\log\left({1}/{\mathcal{E}_{stat}}\right)\right)$
iterations. Moreover, the error $\mathcal{O}\left(\mathcal{E}_{stat}\right)$
is minimax optimal (modulo logarithmic factors), and cannot be improved
significantly. In particular,~\citet{candes2011tight} showed that
\textit{any} estimator $\widehat{X}$ must satisfy $\|\widehat{X}\widehat{X}^{T}-M^{*}\|_{F}^{2}\gtrsim\sigma^{2}nr/m$
with non-negligible probability. The classical methods for achieving
this minimax rate suffer from computationally-prohibitive per
iteration costs~\citep{chen2015fast,recht2010guaranteed,negahban2011estimation}.
Regular gradient descent alleviates this issue at the expense of a
slower convergence rate of $\mathcal{O}(\sqrt{1/\mathcal{E}_{stat}})$~\citep{zhuo2021computational}.
Our proposed PrecGD achieves the best of both worlds: it converges
to the minimax optimal error with cheap per-iteration complexity of
$\mathcal{O}(nr^{2}+r^{3})$, while benefiting from an exponentially
faster convergence rate than regular gradient descent in the over-parameterized
regime.

Theorem~\ref{thm_oracle} highlights the critical role of the damping
coefficient $\eta$ in the guaranteed linear convergence of the algorithm.
In the noiseless regime, we showed in the previous section that an
``optimal'' choice $\eta=\sqrt{f(X)}$ is available for free. In
the noisy setting, however, the same choice of $\eta$ becomes biased
by the noise variance, and is therefore no longer optimal. As is typically
the case for regularized estimation methods~\citep{de2005model,cawley2006leave,guo2011joint},
selecting the ideal parameter would amount to some kind of \textit{resampling},
such as via cross-validation or bootstrapping~\citep{good2006resampling,efron1994introduction,cox1979theoretical},
which is generally expensive to implement and use in practice. As
an alternative approach, we show in our next theorem that a good choice
of $\eta$ is available based on an approximation of the noise variance
$\sigma^{2}$.
\begin{thm}[Noisy measurements with variance proxy]
\label{thm_noisy_var} Suppose that the noise vector $\epsilon\in\mathbb{R}^{m}$
has sub-Gaussian entries with zero mean and variance $\sigma^{2}=\frac{1}{m}\sum_{i=1}^{m}\mathbb{E}[\epsilon_{i}^{2}]$.
Moreover, suppose that $\eta_{k}=\sqrt{|f(X_{k})-\hat{\sigma}^{2}|}$
for $k=0,1,\dots,K$, where $\hat{\sigma}^{2}$ is an approximation
of $\sigma^{2}$, and that the initial point $X_{0}$ satisfies $\|\mathcal{A}(X_{0}X_{0}^{T}-M^{*})\|_{F}^{2}<\rho^{2}(1-\delta)\lambda_{r^{*}}(M^{\star})^{2}$.
Consider $k^{*}=\arg\min_{k}\eta_{k}$, 
and suppose that the step-size
$\alpha \leq 1/L,$ where $L>0$ is a constant that only depends on $\delta$.
Then, with high probability, we have 
\begin{align}
\|X_{k^{*}}X_{k^{*}}^{T}-M^{*}\|_{F}^{2}\lesssim\max\Bigg\{ & \frac{1+\delta}{1-\delta}\left(1-\alpha\frac{\mu_{P}}{2}\right)^{K}\|X_{0}X_{0}^{T}-M^{*}\|_{F}^{2},\mathcal{E}_{stat},\mathcal{E}_{dev},\mathcal{E}_{var}\Bigg\},
\end{align}
where 
\begin{align}
\mathcal{E}_{stat}:=\frac{\sigma^{2}nr\log n}{\mu_{P}(1-\delta)m},\quad\mathcal{E}_{dev}:=\frac{\sigma^{2}}{1-\delta}\sqrt{\frac{\log n}{m}},\quad\mathcal{E}_{var}:=|\sigma^{2}-\hat{\sigma}^{2}|.
\end{align}
\end{thm}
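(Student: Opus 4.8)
The plan is to reduce the noisy analysis to the ``oracle'' argument of Theorem~\ref{thm_oracle} by showing that the data-driven damping $\eta_k=\sqrt{|f(X_k)-\hat\sigma^2|}$ sandwiches the true scaled residual up to an additive statistical error of order $\mathcal{E}:=\mathcal{E}_{stat}+\mathcal{E}_{dev}+\mathcal{E}_{var}$. Write $E_k=X_kX_k^T-M^{\star}$ and expand
\[
f(X_k)=\tfrac{1}{m}\|\mathcal{A}(E_k)\|^2+\tfrac{2}{m}\langle\mathcal{A}(E_k),\epsilon\rangle+\tfrac{1}{m}\|\epsilon\|^2,
\]
so that $f(X_k)-\hat\sigma^2$ equals the signal term $\tfrac1m\|\mathcal{A}(E_k)\|^2$ plus three perturbations: the cross term $\tfrac2m\langle\mathcal{A}(E_k),\epsilon\rangle$, the fluctuation $\tfrac1m\|\epsilon\|^2-\sigma^2$, and the bias $\sigma^2-\hat\sigma^2$. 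The first perturbation will be shown to be of order $\mathcal{E}_{stat}$ (after being partly absorbed into the signal term), the second of order $\mathcal{E}_{dev}$, and the third is exactly $\mathcal{E}_{var}$ by hypothesis.

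The concentration estimates are the standard ones. Since $E_k$ has rank at most $2r$, RIP with parameters $(2r,\delta)$ gives $(1-\delta)\|E_k\|_F^2\le\tfrac1m\|\mathcal{A}(E_k)\|^2\le(1+\delta)\|E_k\|_F^2$. For the cross term, write $\tfrac1m\langle\mathcal{A}(E_k),\epsilon\rangle=\langle E_k,\tfrac1m\sum_i\epsilon_iA_i\rangle$, bound it by $\sqrt{2r}\,\|E_k\|_F\,\|\tfrac1m\sum_i\epsilon_iA_i\|_2$ using $\mathrm{rank}(E_k)\le 2r$, and invoke the sub-Gaussian operator-norm bound $\|\tfrac1m\sum_i\epsilon_iA_i\|_2\lesssim\sigma\sqrt{n\log n/m}$ that holds with high probability (this is the source of the $nr\log n$ factor); Young's inequality then gives $|\tfrac2m\langle\mathcal{A}(E_k),\epsilon\rangle|\le\tfrac{1-\delta}{2}\|E_k\|_F^2+O(\mathcal{E}_{stat})$. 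For the fluctuation, $\tfrac1m\|\epsilon\|^2-\sigma^2=\tfrac1m\sum_i(\epsilon_i^2-\mathbb{E}[\epsilon_i^2])$ is an average of centered sub-exponential variables, so a Bernstein bound yields $|\tfrac1m\|\epsilon\|^2-\sigma^2|\lesssim\sigma^2\sqrt{\log n/m}\asymp(1-\delta)\mathcal{E}_{dev}$ with high probability. Collecting these, there are constants $c_\lb,c_\ub,C>0$ depending only on $\delta$ with
\[
c_\lb\|E_k\|_F^2-C\mathcal{E}\;\le\;\eta_k^2\;\le\;c_\ub\|E_k\|_F^2+C\mathcal{E}\qquad\text{for all }k=0,\dots,K.
\]

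With this sandwich, I would run the per-iteration argument of Theorem~\ref{thm_oracle} (applied to the signal objective $\tilde f(X)=\tfrac1m\|\mathcal{A}(XX^T-M^{\star})\|^2$, whose residual is $\|E_k\|_F$) with a case split. If $\|E_k\|_F^2\gtrsim\mathcal{E}$, the sandwich forces $\eta_k\asymp\|E_k\|_F$ with absolute constants, so condition~(\ref{eq:condition}) holds; Lemma~\ref{lem:lipp} and Lemma~\ref{lem:bndgrad} then bound $L_P$ by a constant depending only on $\delta$, Theorem~\ref{thm:pl} supplies gradient dominance, and the residual contracts as $\|E_{k+1}\|_F^2\le(1-\alpha\mu_P/2)\|E_k\|_F^2+O(\mathcal{E})$, with an induction identical to the noiseless case keeping $X_{k+1}$ inside the radius~(\ref{eq:radius}); the $O(\mathcal{E})$ comes from the noise part of $\nabla f$. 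If instead $\|E_k\|_F^2\lesssim\mathcal{E}$, the iterate is already at the statistical floor. Unrolling over $k=0,\dots,K$ gives $\|E_k\|_F^2\lesssim\max\{\frac{1+\delta}{1-\delta}(1-\alpha\mu_P/2)^k\|E_0\|_F^2,\mathcal{E}\}$ for every $k$; since $\eta_{k^*}^2=\min_k\eta_k^2$, the sandwich gives $c_\lb\|E_{k^*}\|_F^2\le\eta_{k^*}^2+C\mathcal{E}\le\eta_k^2+C\mathcal{E}\le c_\ub\|E_k\|_F^2+2C\mathcal{E}$ for all $k$, and taking $k$ to attain the geometric bound yields the claimed estimate on $\|X_{k^*}X_{k^*}^T-M^{\star}\|_F^2$.

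The main obstacle is the honesty of the lower bound $\eta_k^2\ge c_\lb\|E_k\|_F^2-C\mathcal{E}$ given the absolute value in $\eta_k=\sqrt{|f(X_k)-\hat\sigma^2|}$: a priori $\eta_k$ could be small not because $\|E_k\|_F$ is small but because the signal nearly cancels the noise terms $\tfrac1m\|\epsilon\|^2$ and $\hat\sigma^2$, in which case $\arg\min_k\eta_k$ would pick a bad iterate. The decomposition above rules this out precisely because once $\|E_k\|_F^2$ exceeds the statistical scale $\mathcal{E}$ the term $\tfrac1m\|\mathcal{A}(E_k)\|^2\ge(1-\delta)\|E_k\|_F^2$ dominates all three perturbations, so $f(X_k)-\hat\sigma^2>0$ and is comparable to $\|E_k\|_F^2$; making this comparison quantitative while simultaneously carrying the neighborhood-invariance induction (so that the sandwich is in fact available at every iterate) is the delicate part of the argument.
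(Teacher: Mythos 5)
Your proposal is correct and follows essentially the same route as the paper: the same decomposition of $f(X_k)-\hat\sigma^2$ into signal, cross term, variance fluctuation, and bias, the same concentration bounds (operator-norm control of $\frac1m\sum_i\epsilon_iA_i$ with the rank-$2r$ factor, and sub-exponential concentration of $\frac1m\|\epsilon\|^2$), and the same two-case reduction to the oracle argument of Theorem~\ref{thm_oracle} once $\|E_k\|_F^2$ exceeds the statistical scale. Your unconditional additive sandwich on $\eta_k^2$ is just a repackaging of the paper's Lemma~\ref{lem:proxy}, which states the equivalence $\eta_k\asymp\frac{1}{\sqrt m}\|\mathcal{A}(E_k)\|$ conditionally on the error exceeding the threshold, and your treatment of the $\arg\min_k\eta_k$ selection matches the paper's case split on $\min_k\eta_k$.
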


In the above theorem, $\mathcal{E}_{dev}$ captures the deviation
of the empirical variance $\frac{1}{m}\sum_{i=1}^{m}\epsilon_{i}^{2}$
from its expectation $\sigma^{2}$. On the other hand, $\mathcal{E}_{var}$
captures the approximation error of the true variance. According to
Theorem~\ref{thm_noisy_var}, it is possible to chose the damping
factor $\eta_{k}$ merely based on $f(X_k)$ and an approximation of
$\sigma^{2}$, at the expense of a suboptimal estimation error rate.
In particular, suppose that the noise variance is known precisely,
i.e., $\hat{\sigma}^{2}=\sigma^{2}$. Then, the above theorem implies
that the estimation error is reduced to 
\[
\|X_{k^{*}}X_{k^{*}}^{T}-M^{*}\|_{F}^{2}\lesssim\max\left\{ \mathcal{E}_{stat},\mathcal{E}_{dev}\right\} \quad\text{after}\quad\mathcal{O}\left(\log\left(\frac{1}{\max\left\{ \mathcal{E}_{stat},\mathcal{E}_{dev}\right\} }\right)\right)\ \text{iterations.}
\]
If $m$ is not too large, i.e., $m\lesssim\sigma^{2}n^{2}r^{2}\log n$,
the estimation error can be improved to $\|X_{k^{*}}X_{k^{*}}^{T}-M^{*}\|_{F}^{2}\lesssim\mathcal{E}_{stat}$,
which is again optimal (modulo logarithmic factors). As $m$ increases, the estimation error will become smaller, but the convergence rate will decrease.
This suboptimal
rate is due to the heavy tail phenomenon arising from the concentration
of the noise variance. In particular, one can write 
\begin{align}
f(X)-\sigma^{2}=\frac{1}{m}\|\mathcal{A}(XX^{T}-M^{\star})\|^{2}+\underbrace{\frac{1}{m}\|\epsilon\|^{2}-\sigma^{2}}_{\text{variance deviation}}+\underbrace{\frac{2}{m}\langle{\AA(ZZ^{T}-XX^{T})},{\epsilon}\rangle}_{\text{cross-term}}
\end{align}
Evidently, $f(X)-\sigma^{2}$ is in the order of $\frac{1}{m}\|\mathcal{A}(XX^{T}-M^{\star})\|^{2}$
if both variance deviation and cross-term are dominated by $\frac{1}{m}\|\mathcal{A}(XX^{T}-M^{\star})\|^{2}$.
In the proof of Theorem~\ref{thm_noisy_var}, we show that, with
high probability, the variance deviation is upper bounded by $(1-\delta)\mathcal{E}_{dev}$
and it dominates the cross-term. This implies that the choice of $\eta=\sqrt{|f(X)-\sigma^{2}|}$
behaves similar to $\frac{1}{\sqrt{m}}\|\mathcal{A}(XX^{T}-M^{\star})\|$,
and hence, the result of Theorem~\ref{thm_oracle} can be invoked,
so long as 
\[
\frac{1}{m}\|\mathcal{A}(XX^{T}-M^{\star})\|^{2}\geq(1-\delta)\|XX^{T}-M^{\star}\|_{F}^{2}\gtrsim(1-\delta)\mathcal{E}_{dev}.
\]

\section{Numerical Experiments}

Finally, we numerically compare PrecGD on other matrix factorization
problems that fall outside of the matrix sensing framework. We consider
the $\ell_{p}$ empirical loss $f_{p}(X)=\sum_{i=1}^{m}|\langle A_{i},XX^{T}-M^{\star}\rangle|^{p}$
for $1\le p<2$, in order to gauge the effectiveness of PrecGD for
increasing nonsmooth loss functions. Here, we set the damping parameter
$\eta_{k}=[f_{p}(X_{k})]^{1/p}$ as a heuristic for the error $\|XX^{T}-M^{\star}\|_{F}$.
 The data matrices $A_{1},\dots,A_{m}$ were taken from~\citep[Example~12]{zhang2019sharp},
the ground truth $M^{\star}=ZZ^{T}$ was constructed by sampling each
column of $Z\in\R^{n\times r^{\star}}$ from the standard Gaussian,
and then rescaling the last column to achieve a desired condition
number. 

The recent work of Tong et al. \cite{tong2021low} showed that in the exactly-parameterized setting, ScaledGD works well for the $\ell_1$ loss function. In particular, if the initial point is close to the ground truth, then with a Polyak stepsize $\alpha_k = f(X_k)/\|\nabla f(X_k)\|_{P}^*$, ScaledGD converges linearly to the ground truth. However, these theoretical guarantees no longer hold in the over-parameterized regime. 

When $r>r^*$, our numerical experiments show that ScaledGD blows up due to singularity near the ground truth while PrecGD continues to converge linearly in this nonsmooth, over-parameterized setting. In Figure
\ref{fig:p_over} we compare GD, ScaledGD and PrecGD in the exact and
over-parameterized regimes for the $\ell_{p}$ norm, with $p=1.1,1.4$
and $1.7$. For ScaledGD and PrecGD, we used a modified version of the Polyak step-size where $\alpha_k = f(X_k)^p/\|\nabla f(X_k)\|_P^*$. For GD we use a decaying stepsize. 
When $r=r^{*},$ we see that both ScaledGD and PrecGD converge linearly, but
GD stagnates due to ill-conditioning of the ground truth. When $r>r^{*}$, GD still converges
slowly and ScaledGD blows up very quickly, while PrecGD continues to
converge reliably.


\begin{figure}[ht]
\begin{tikzpicture}
\newcommand{\coord}{4.1}
\newcommand{\wid}{0.3}
\newcommand{\rl}{0.25}
\node[] at (0.5,2) {\large $p=1.1$
};
\node[] at (4.6,2) {\large $p=1.4$
};
\node[] at (8.7,2) {\large $p=1.7$
};

\node[rotate=0] at (-2.5,-0) {\large $r=r^*$
};
\node[rotate=0] at (-2.5,-3.5) {\large $r>r^*$
};
\node (image) at (\rl,0) {
\includegraphics[width = \wid\textwidth]{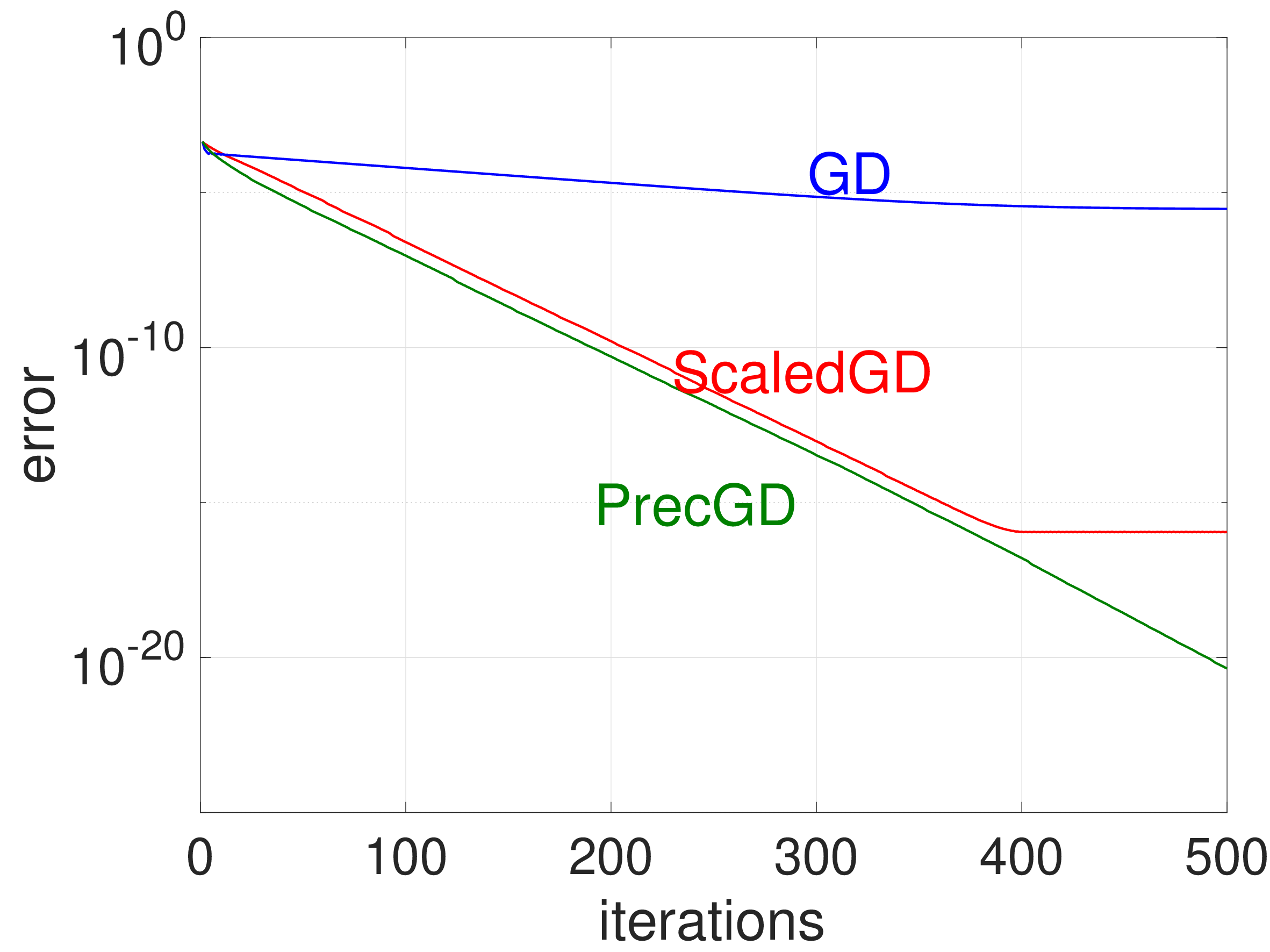}
};
\node (image) at (\rl+\coord,0) {
\includegraphics[width = \wid\textwidth]{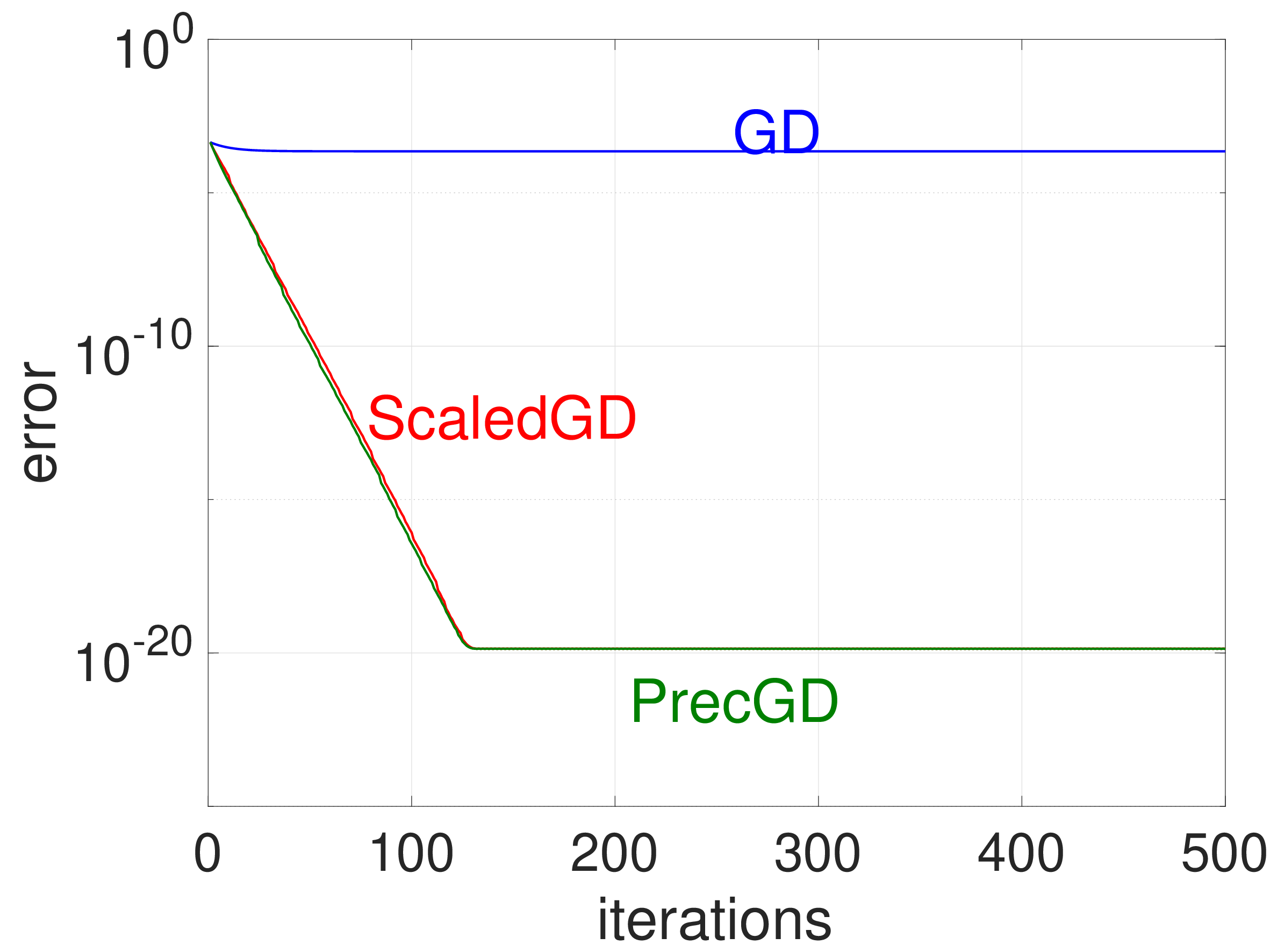}
};
\node (image) at (\rl+2*\coord,0) {
\includegraphics[width = \wid\textwidth]{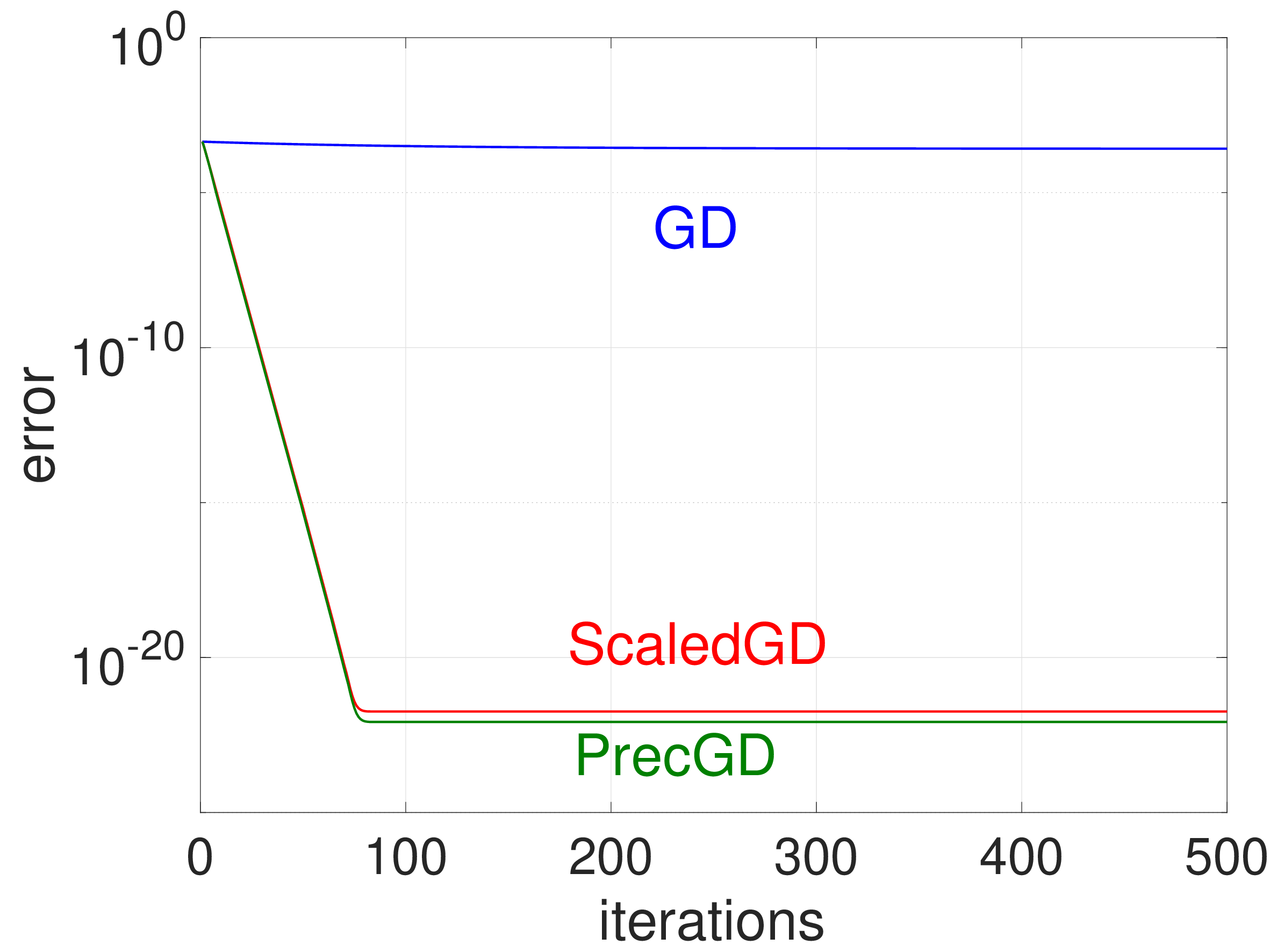}
};
\node (image) at (\rl,-3.5) {
\includegraphics[width = \wid\textwidth]{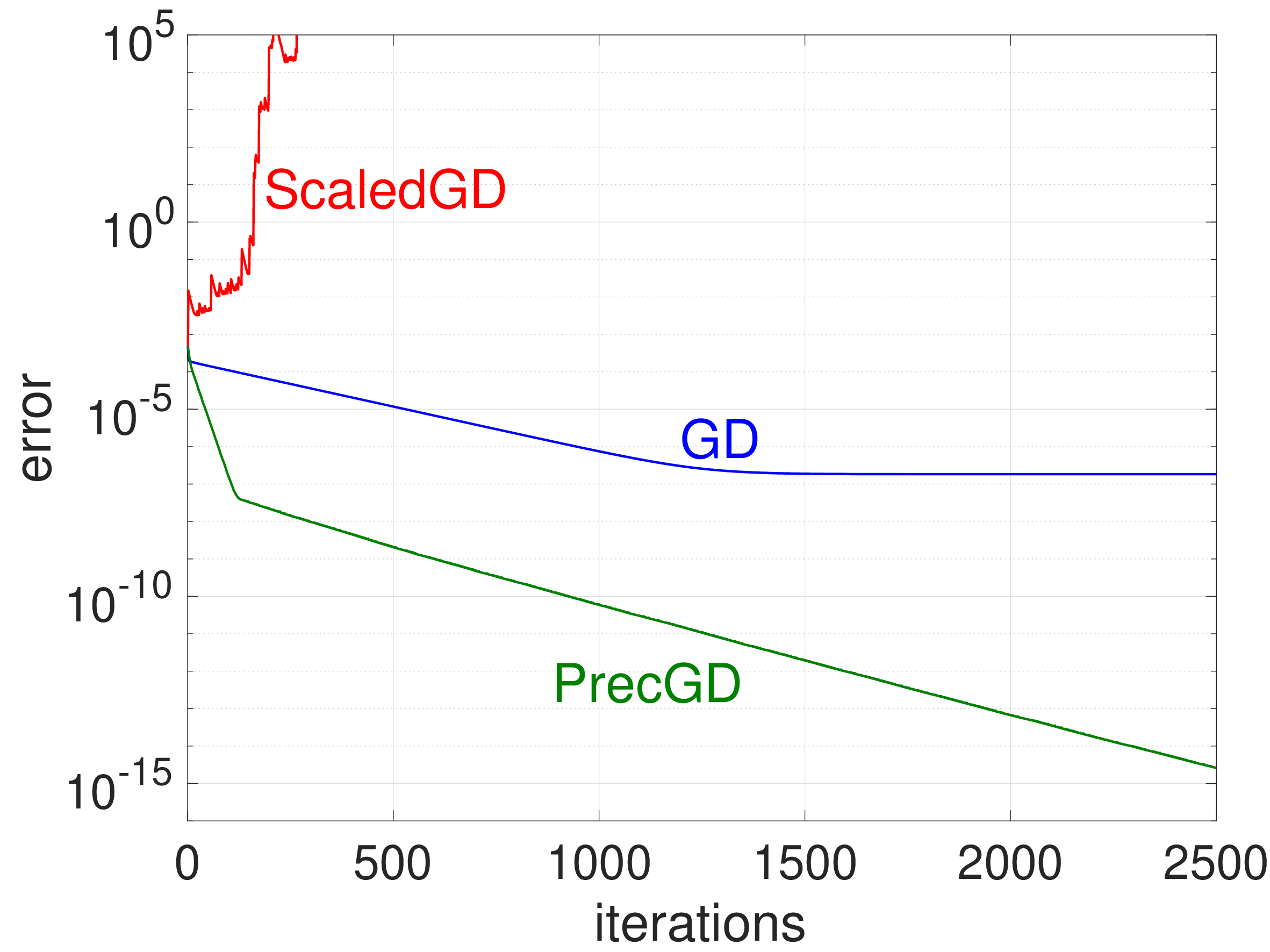}
};
\node (image) at (\rl+\coord,-3.5) {
\includegraphics[width = \wid\textwidth]{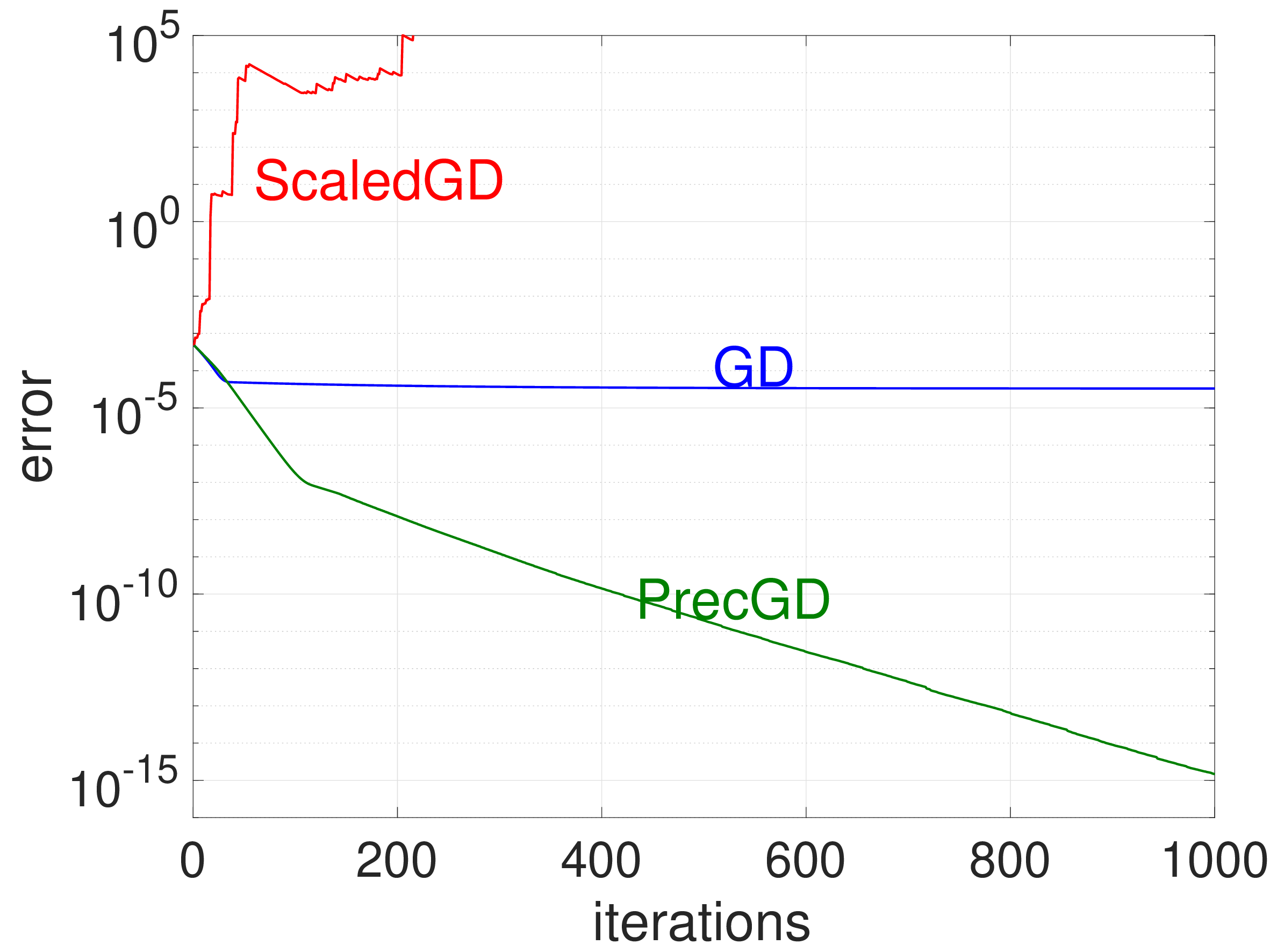}
};
\node (image) at (\rl+2*\coord,-3.5) {
\includegraphics[width = \wid\textwidth]{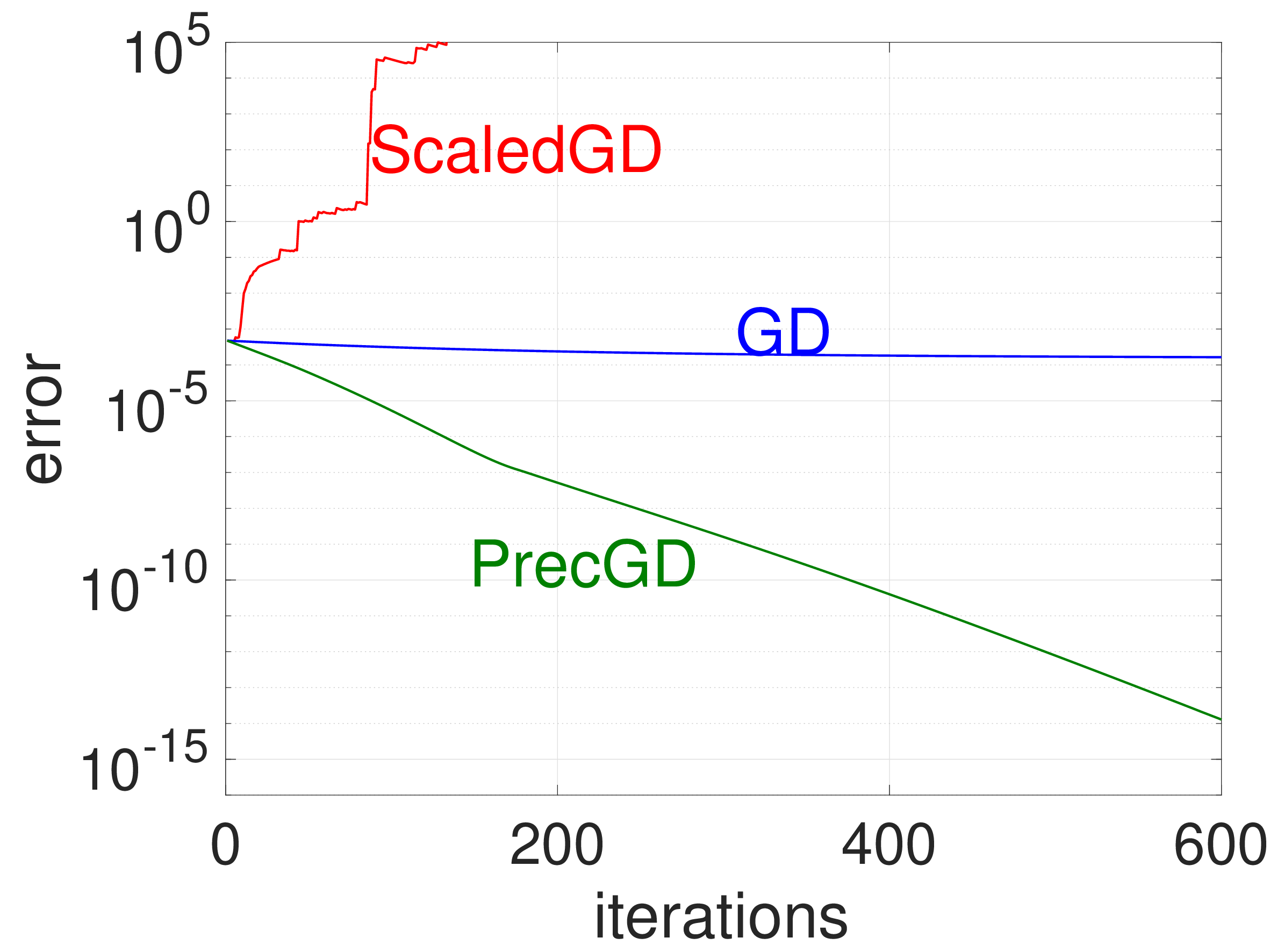}
};
\end{tikzpicture}

\caption{\textbf{Nonconvex matrix factorization with the $\ell_{p}$ empirical
loss}. We compare $\ell_{p}$ matrix sensing with $n=10$ and $r^{\star}=2$
and $\protect\AA$ taken from~\citep{zhang2019sharp}. The ground
truth is chosen to be ill-conditioned ($\kappa=10^{2})$. For ScaledGD and PrecGD, we use the Polyak step-size in \cite{tong2021low}. For GD we use a decaying step-size.
\textbf{(Top $r=r^{*}$)} For all three values of $p$,
GD stagnates due to the ill-conditioning of the ground truth, while ScaledGD and PrecGD converge linearly in all three cases. 
\textbf{(Bottom $r>r^{*}$)} With $r=4$, the problem is over-parameterized. GD again converges slowly
and ScaledGD is sporadic due to near-singularity caused by over-parameterization.
Once again we see PrecGD converge at a linear rate.}
\label{fig:p_over} 
\end{figure}

\section{Conclusions}

In this paper, we propose a \emph{preconditioned} gradient descent
or PrecGD for nonconvex matrix factorization with a comparable per-iteration
cost to classical gradient descent. For over-parameterized matrix
sensing, gradient descent slows down to a sublinear convergence rate,
but PrecGD restores the convergence rate back to linear, while also
making the iterations immune to ill-conditioning in the ground truth.
While the thoeretical analysis in our paper uses some properties specific to RIP matrix sensing, our numerical experiments find that PrecGD works well for even for nonsmooth loss functions. We believe that these current results can be extended to similar problems such as matrix completion and robust PCA, where properties like incoherence can be used to select the damping parameter $\eta_k$ with the desired properties, so that PrecGD converges linearly as well. It remains future work to provide rigorous justification for these observations. 

\section*{Acknowledgements}
G.Z. and R.Y.Z are supported by the NSF CAREER Award ECCS-2047462. S.F. is supported by MICDE Catalyst Grant and MIDAS PODS Grant. We also thank an anonymous reviewer who provided a simplified proof of Lemma \ref{lem:reform} and made various insightful comments to help us improve an earlier version of this work.

\medskip
\bibliographystyle{unsrtnat}
\bibliography{reference}

\newpage 
\appendix

\section{Preliminaries for the Noiseless Case}

Recall that the matrix inner product is defined $\inner XY\eqdef\tr{X^{T}Y}$,
and that it induces the Frobenius norm as $\|X\|_{F}=\sqrt{\inner XX}$.
The vectorization $\vect(X)$ is the usual column-stacking operation
that turns an $m\times n$ matrix into a length-$mn$ vector; it preserves
the matrix inner product $\inner XY=\vect(X)^{T}\vect(Y)$ and the
Frobenius norm $\|\vect(X)\|=\|X\|_{F}$. The Kronecker product $\otimes$
is implicitly defined to satisfy $\vect(AXB^{T})=(B\otimes A)\vect X$.

We denote $\lambda_{i}(M)$ and $\sigma_{i}(M)$ as the $i$-th eigenvalue
and singular value of a symmetric matrix $M=M^{T}$, ordered from
the most positive to the most negative. We will often write $\lambda_{\max}(M)$
and $\lambda_{\min}(M)$ to index the most positive and most negative
eigenvalues, and $\sigma_{\max}(M)$ and $\sigma_{\min}(M)$ for the
largest and smallest singular values. 

We denote $\A=[\vect(A_{1}),\ldots,\vect(A_{m})]^{T}$ as the matrix
representation of $\AA$, and note that $\AA(X)=\A\,\vect(X)$. For
fixed $X$ and $M^{\star}$, we can rewrite $f$ in terms of the error
matrix $E$ or its vectorization $\e$ as follows
\begin{equation}
f(X)=\|\AA(E)\|^{2}=\|\A\e\|^{2}\text{ where }E=XX^{T}-M^{\star},\quad\e=\vect(E).\label{eq:edef}
\end{equation}
The gradient satisfies for any matrix $D\in\R^{n\times r}$ 
\begin{equation}
\inner{\nabla f(X)}D=2\left\langle \mathcal{A}\left(XD^{T}+DX^{T}\right),\mathcal{A}\left(E\right)\right\rangle .\label{eq:grad_v}
\end{equation}
Letting $\J$ denote the Jacobian of the vectorized error $\e$ with
respect to $X$ implicitly as the matrix that satisfies
\begin{align}
 & \J\,\vect(Y)=\vect(XY^{T}+YX^{T})\qquad\text{for all }Y\in\R^{n\times r}.\label{eq:Jdef}
\end{align}
allows us to write the gradient exactly as $\vect(\nabla f(X))=2\J^{T}\A^{T}\A\e$.
The noisy versions of (\ref{eq:edef}) and (\ref{eq:grad_v}) are
obvious, though we will defer these to Section~\ref{sec:Prelim_noisy}.

Recall that $\AA$ is assumed to satisfy RIP (Definition~\ref{def:rip})
with parameters $(2r,\delta)$. Here, we set $m=1$ without loss of
generality to avoid carrying the normalizing constant; the resulting
RIP inequality reads 
\begin{equation}
(1-\delta)\|M\|_{F}^{2}\le\|\AA(M)\|^{2}\le(1+\delta)\|M\|_{F}^{2}\text{ for all }M\text{ such that }\rank(M)\le2r,\label{eq:rip}
\end{equation}
where we recall that $0\le\delta<1$. It is easy to see that RIP preserves
the Cauchy--Schwarz identity for all rank-$2r$ matrices $G$ and
$H$:
\begin{equation}
\inner{\AA(G)}{\AA(H)}\le\|\AA(G)\|\|\AA(H)\|\le(1+\delta)\|G\|_{F}\|H\|_{F}.\label{eq:rip-cs}
\end{equation}

As before, we introduce the preconditioner matrix $P$ as 
\begin{align*}
P & \eqdef X^{T}X+\eta I_{r}, & \P & \eqdef P\otimes I_{n}=(X^{T}X+\eta I_{r})\otimes I_{n}
\end{align*}
and define a corresponding $P$-inner product, $P$-norm, and dual
$P$-norm on $\R^{n\times r}$ as follows \begin{subequations}
\begin{align}
\inner XY_{P} & \eqdef\vect(X)^{T}\P\vect(Y)=\inner{XP^{1/2}}{YP^{1/2}}=\tr{XPY^{T}},\label{eq:Pinner}\\
\|X\|_{P} & \eqdef\sqrt{\inner XX_{P}}=\|\P^{1/2}\vect(X)\|=\|XP^{1/2}\|_{F},\label{eq:Pnrm}\\
\|X\|_{P*} & \eqdef\max_{\|Y\|_{P}=1}\inner YX=\|\P^{-1/2}\vect(X)\|=\|XP^{-1/2}\|_{F}.\label{eq:dPnrm}
\end{align}
\end{subequations}Finally, we will sometimes need to factorize the
ground truth $M^{\star}=ZZ^{T}$ in terms of the low-rank factor $Z\in\mathbb{R}^{n\times r^{\star}}$. 

\section{Proof of Lipschitz-like Inequality (Lemma \ref{lem:lipp})}

\label{appendix:A}
In this section we give a proof of Lemma~\ref{lem:lipp},
which is a Lipschitz-like inequality under the $P$-norm. Recall that
we proved linear convergence for PrecGD by lower-bounding the linear
progress $\inner{\nabla f(X)}D$ and upper-bounding $\|D\|_{P}$. 
\begin{lem}
[Lipschitz-like inequality; Lemma~\ref{lem:lipp} restated]
Let $\|D\|_{P}=\|D(X^{T}X+\eta I)^{1/2}\|_{F}$. Then we have 
\[
f(X+D)\le f(X)+\inner{\nabla f(X)}D+\frac{1}{2}L_{P}(X,D)\|D\|_{P}^{2}
\]
where 
\[
L_{P}(X,D)=2(1+\delta)\left[4+\frac{2\|XX^{T}-M^{\star}\|_{F}+4\|D\|_{P}}{\lambda_{\min}(X^{T}X)+\eta}+\left(\frac{\|D\|_{P}}{\lambda_{\min}(X^{T}X)+\eta}\right)^{2}\right]
\]
\end{lem}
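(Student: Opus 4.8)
The plan is to expand $f(X+D)$ exactly and match terms against the right-hand side. Write $E \eqdef XX^{T}-M^{\star}$ and $\Delta \eqdef (X+D)(X+D)^{T}-XX^{T} = XD^{T}+DX^{T}+DD^{T}$, so that $(X+D)(X+D)^{T}-M^{\star} = E+\Delta$ and, by (\ref{eq:edef}),
\[
f(X+D) = \|\AA(E)\|^{2} + 2\inner{\AA(E)}{\AA(\Delta)} + \|\AA(\Delta)\|^{2}.
\]
The first term is $f(X)$. Splitting $\Delta = (XD^{T}+DX^{T}) + DD^{T}$ and invoking the gradient identity (\ref{eq:grad_v}), the part of the cross term carrying $XD^{T}+DX^{T}$ is precisely $\inner{\nabla f(X)}{D}$. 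Hence the whole statement reduces to showing
\[
2\inner{\AA(E)}{\AA(DD^{T})} + \|\AA(\Delta)\|^{2} \le \tfrac{1}{2}L_{P}(X,D)\,\|D\|_{P}^{2}.
\]

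Next I would bound the two remaining pieces with RIP. Since $\rank(E) \le r+r^{\star} \le 2r$ and $\rank(DD^{T}) \le r$, the RIP Cauchy--Schwarz inequality (\ref{eq:rip-cs}) gives $2\inner{\AA(E)}{\AA(DD^{T})} \le 2(1+\delta)\|E\|_{F}\|DD^{T}\|_{F}$. For the quadratic piece, the key point is to write $\Delta = (X+D)(X+D)^{T}-XX^{T}$ as a difference of two matrices of rank $\le r$, so $\rank(\Delta)\le 2r$ and RIP (\ref{eq:rip}) applies: $\|\AA(\Delta)\|^{2} \le (1+\delta)\|\Delta\|_{F}^{2} \le (1+\delta)\big(2\|XD^{T}\|_{F}+\|DD^{T}\|_{F}\big)^{2}$, using the triangle inequality and $\|DX^{T}\|_{F}=\|XD^{T}\|_{F}$.

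The estimates that make the dimensional factors come out right are two elementary facts about the preconditioner $P = X^{T}X+\eta I_{r}$. First, $P \succeq (\lambda_{\min}(X^{T}X)+\eta)I$, so $\|D\|_{F} = \|(DP^{1/2})P^{-1/2}\|_{F} \le \|D\|_{P}/\sqrt{\lambda_{\min}(X^{T}X)+\eta}$, whence $\|DD^{T}\|_{F} \le \|D\|_{F}^{2} \le \|D\|_{P}^{2}/(\lambda_{\min}(X^{T}X)+\eta)$. Second, $P$ contracts $X$: $\|XP^{-1/2}\|_{2}^{2} = \lambda_{\max}\big(X^{T}X(X^{T}X+\eta I)^{-1}\big) \le 1$ because $X^{T}X \preceq X^{T}X+\eta I$, so $\|XD^{T}\|_{F} = \|XP^{-1/2}(DP^{1/2})^{T}\|_{F} \le \|XP^{-1/2}\|_{2}\|D\|_{P} \le \|D\|_{P}$. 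Substituting these into the two RIP bounds, setting $\ell \eqdef \lambda_{\min}(X^{T}X)+\eta$, and expanding $\big(2\|D\|_{P}+\|D\|_{P}^{2}/\ell\big)^{2}$, the sum becomes $(1+\delta)\|D\|_{P}^{2}\big[4 + (2\|E\|_{F}+4\|D\|_{P})/\ell + (\|D\|_{P}/\ell)^{2}\big] = \tfrac{1}{2}L_{P}(X,D)\|D\|_{P}^{2}$, which is exactly the claimed constant.

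The only place where care is genuinely needed is the rank bookkeeping: the naive bound $\rank(\Delta)\le 3r$ from the three summands would put $\Delta$ outside the reach of $(2r,\delta)$-RIP, so one must use $\Delta = (X+D)(X+D)^{T}-XX^{T}$ to get $\rank(\Delta)\le 2r$. Closely related is the observation $\|XP^{-1/2}\|_{2}\le 1$, which is what lets $\|XD^{T}\|_{F}$ be controlled by $\|D\|_{P}$ with no dependence on $\lambda_{\max}(X^{T}X)$ and thus keeps $L_{P}$ free of the model's conditioning. Everything else — the expansion of $f(X+D)$, identifying the gradient term, and the final arithmetic — is routine.
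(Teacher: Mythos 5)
Your proof is correct and matches the paper's argument essentially step for step: the same expansion of the quartic $f(X+D)$ identifying the gradient term, the same RIP-based bounds on the higher-order terms, and the same two preconditioner estimates $\|D\|_{F}^{2}\le\|D\|_{P}^{2}/(\lambda_{\min}(X^{T}X)+\eta)$ and $\sigma_{\max}(XP^{-1/2})\le1$. The only cosmetic difference is that you apply RIP once to the whole remainder $\Delta$ (via $\rank(\Delta)\le2r$) and then use the triangle inequality on $\|\Delta\|_{F}$, whereas the paper bounds the three constituent terms separately with RIP and its Cauchy--Schwarz variant; both routes yield the identical constant $L_{P}(X,D)$.
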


\begin{proof}
Recall that $E=XX^{T}-M^{\star}$. We obtain a Taylor expansion of the
quartic polynomial $f$ by directly expanding the quadratic terms
\begin{align*}
f(X+D) & =\|\AA((X+D)(X+D)^{T}-M^{\star})\|^{2}\\
 & =\underbrace{\|\AA(E)\|^{2}+2\langle\AA(E),\AA(XD^{T}+DX^{T})\rangle}_{f(X)+\inner{\nabla f(X)}D}+\underbrace{2\langle\AA(E),\AA(DD^{T})\rangle+\|\AA(XD^{T}+DX^{T})\|^{2}}_{\frac{1}{2}\inner{\nabla^{2}f(X)[D]}D}\\
 & \hfill+\underbrace{2\langle\AA(XD^{T}+DX^{T}),\AA(DD^{T})\rangle}_{\frac{1}{6}\inner{\nabla^{3}f(X)[D,D]}D}+\underbrace{\|\AA(DD^{T})\|^{2}}_{\frac{1}{24}\inner{\nabla^{4}f(X)[D,D,D]}D}.
\end{align*}
We evoke RIP to preserve Cauchy--Schwarz as in \eqref{eq:rip-cs},
and then bound the second, third, and fourth order terms
\begin{align}
T= & 2\langle\AA(E),\AA(DD^{T})\rangle+\|\AA(XD^{T}+DX^{T})\|^{2}+2\langle\AA(XD^{T}+DX^{T}),\AA(DD^{T})\rangle+\|\AA(DD^{T})\|^{2}\nonumber \\
\le & (1+\delta)\left(2\|E\|_{F}\|DD^{T}\|_{F}+\|XD^{T}+DX^{T}\|^{2}+2\|XD^{T}+DX^{T}\|_{F}\|DD^{T}\|_{F}+\|DD^{T}\|_{F}^{2}\right)\nonumber \\
\le & (1+\delta)\left(2\|E\|_{F}\|D\|_{F}^{2}+4\|XD^{T}\|^{2}+4\|XD^{T}\|_{F}\|D\|_{F}^{2}+\|D\|_{F}^{4}\right)\label{eq:Tbnd}
\end{align}
where the third line uses $\|DD^{T}\|_{F}\le\|D\|_{F}^{2}$ and $\|XD^{T}+DX^{T}\|_{F}\le2\|XD^{T}\|_{F}$.
Now, write $d=\vect(D)$ and observe that 
\begin{equation}
\|D\|_{F}^{2}=d^{T}d=(d^{T}\P^{1/2})\P^{-1}(\P^{1/2}d)\leq(d^{T}\P d)\lambda_{\max}(\P^{-1})=\|D\|_{P}^{2}/\lambda_{\min}(\P).\label{eq:dbnd}
\end{equation}
Similarly, we have 
\begin{align}
\|XD^{T}\|_{F} & =\|XP^{-1/2}P^{1/2}D^{T}\|_{F}\le\sigma_{\max}(XP^{-1/2})\|P^{1/2}D^{T}\|_{F}\le\|D\|_{P}.\label{eq:xdbnd}
\end{align}
The final inequality uses $\|P^{1/2}D^{T}\|_{F}=\|DP^{1/2}\|_{F}=\|D\|_{P}$
and that 
\begin{equation}
\sigma_{\max}(XP^{-1/2})=\sigma_{\max}[X(X^{T}X+\eta I)^{-1/2}]=\sigma_{\max}(X)/\sqrt{\sigma_{\max}^{2}(X)+\eta }\le1.\label{eq:XPsigma}
\end{equation}
Substituting (\ref{eq:dbnd}) and (\ref{eq:xdbnd}) into (\ref{eq:Tbnd})
yields 
\begin{align*}
T\leq(1+\delta)\left(2\|E\|_{F}\frac{\|D\|_{P}^{2}}{\lambda_{\min}(\P)}+4\|D\|_{P}^{2}+\frac{4\|D\|_{P}^{3}}{\lambda_{\min}(\P)}+\frac{\|D\|_{P}^{4}}{\lambda_{\min}^{2}(\P)}\right) & =\frac{1}{2}L_{P}(X,D)\|D\|_{P}^{2}
\end{align*}
where we substitute $\lambda_{\min}(\P)=\lambda_{\min}(X^{T}X)+\eta$. 
\end{proof}

\section{Proof of Bounded Gradient (Lemma \ref{lem:bndgrad})}

In this section we prove Lemma~\ref{lem:bndgrad}, which shows that
the gradient measured in the dual $P$-norm $\|\nabla f(X)\|_{P*}$
is controlled by the objective value as $\sqrt{f(X)}$. 
\begin{lem}[Bounded Gradient; Lemma~\ref{lem:bndgrad} restated]
For the search direction $D=\nabla f(X)(X^{T}X+\eta I)^{-1}$, we
have $\|D\|_{P}^{2}=\|\nabla f(X)\|_{P*}^{2}\le16(1+\delta)f(X)$. 
\end{lem}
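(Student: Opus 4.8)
The plan is to split the statement into the exact identity $\|D\|_{P}^{2}=\|\nabla f(X)\|_{P*}^{2}$ and the scalar bound on the dual norm, each obtained by elementary manipulation. First I would record that $D=\nabla f(X)(X^{T}X+\eta I)^{-1}$ is the same as $\vect(D)=\P^{-1}\vect(\nabla f(X))$, from which
\[
\|D\|_{P}^{2}=\vect(D)^{T}\P\,\vect(D)=\vect(\nabla f(X))^{T}\P^{-1}\vect(\nabla f(X))=\|\nabla f(X)\|_{P*}^{2},
\]
the last equality being the definition \eqref{eq:dPnrm} of the dual $P$-norm.

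To bound $\|\nabla f(X)\|_{P*}$ I would use the variational form $\|\nabla f(X)\|_{P*}=\max_{\|Y\|_{P}=1}\inner{Y}{\nabla f(X)}$ and estimate $\inner{Y}{\nabla f(X)}$ for a generic feasible $Y$. The gradient identity \eqref{eq:grad_v} gives $\inner{Y}{\nabla f(X)}=2\inner{\AA(XY^{T}+YX^{T})}{\AA(E)}$ where $E=XX^{T}-M^{\star}$; both $XY^{T}+YX^{T}$ and $E$ have rank at most $2r$, so ordinary Cauchy--Schwarz followed by the RIP upper bound \eqref{eq:rip} yields
\[
\inner{Y}{\nabla f(X)}\le 2\|\AA(XY^{T}+YX^{T})\|\cdot\|\AA(E)\|\le 2\sqrt{1+\delta}\,\|XY^{T}+YX^{T}\|_{F}\cdot\sqrt{f(X)},
\]
using $\|\AA(E)\|=\sqrt{f(X)}$ from \eqref{eq:edef}. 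Finally I would bound $\|XY^{T}+YX^{T}\|_{F}\le 2\|XY^{T}\|_{F}$ by the triangle inequality and invoke inequality \eqref{eq:xdbnd} from the proof of Lemma~\ref{lem:lipp} --- which uses only $\sigma_{\max}(XP^{-1/2})\le1$, valid for every $\eta\ge0$ --- to get $\|XY^{T}\|_{F}\le\|Y\|_{P}=1$. Hence $\inner{Y}{\nabla f(X)}\le 4\sqrt{1+\delta}\sqrt{f(X)}$, and taking the maximum over $\|Y\|_{P}=1$ and squaring gives $\|\nabla f(X)\|_{P*}^{2}\le16(1+\delta)f(X)$.

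There is no genuine obstacle here; the only items to watch are that the two matrices fed to RIP have rank at most $2r$ (so \eqref{eq:rip} applies), that one should apply ordinary Cauchy--Schwarz on the $\AA(E)$ factor so as to keep $\|\AA(E)\|=\sqrt{f(X)}$ exactly rather than bounding it by $\sqrt{1+\delta}\|E\|_{F}$, and that the bound $\|XY^{T}\|_{F}\le\|Y\|_{P}$ can be quoted from the preceding proof instead of re-derived.
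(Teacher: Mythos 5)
Your proof is correct and follows essentially the same route as the paper's: the variational definition of the dual $P$-norm, Cauchy--Schwarz on the $\AA(E)$ factor, RIP applied to the rank-$2r$ matrix $XY^{T}+YX^{T}$, and the bound $\sigma_{\max}(XP^{-1/2})\le1$. The only addition is your explicit verification of the identity $\|D\|_{P}^{2}=\|\nabla f(X)\|_{P*}^{2}$, which the paper asserts without spelling out.
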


\begin{proof}
We apply the variation definition of the dual $P$-norm in (\ref{eq:dPnrm})
to the gradient in (\ref{eq:grad_v}) to obtain
\begin{align*}
\|\nabla f(X)\|_{P^{*}} & =\max_{\|Y\|_{P}=1}\inner{\nabla f(X)}Y=\max_{\|Y\|_{P}=1}2\inner{\AA(XY^{T}+YX^{T})}{\AA(E)}\\
 & \overset{\text{(a)}}{\le}2\|\AA(E)\|\max_{\|Y\|_{P}=1}\|\AA(XY^{T}+YX^{T})\|\overset{\text{(b)}}{\le}4\sqrt{(1+\delta)f(X)}\max_{\|Y\|_{P}=1}\|XY^{T}\|_{F}
\end{align*}
Here (a) applies Cauchy--Schwarz; and (b) substitutes $f(X)=\|\AA(E)\|^{2}$
and $\|\AA(M)\|\le\sqrt{1+\delta}\|M\|_{F}$ for rank-$2r$ matrix
$M$ and $\|XY^{T}+YX^{T}\|_{F}\le2\|XY^{T}\|_{F}$. Now, we bound
the final term 
\[
\max_{\|Y\|_{P}=1}\|XY^{T}\|_{F}=\max_{\|YP^{1/2}\|_{F}=1}\|XY^{T}\|_{F}=\max_{\|\tilde{Y}\|_{F}=1}\|XP^{-1/2}\tilde{Y}^{T}\|_{F}=\sigma_{\max}(XP^{-1/2})\le1
\]
where the final inequality uses \eqref{eq:XPsigma}. 
\end{proof}

\section{Proof of Gradient Dominance (Theorem \ref{thm:pl})}

\label{appendix:B}In this section we prove our first main result:
the gradient $\nabla f(X)$ satisfies gradient dominance the $P$-norm.
This is the key insight that allowed us to establish the linear convergence
rate of PrecGD in the main text. The theorem is restated below. 
\begin{thm}[Gradient Dominance; Theorem~\ref{thm:pl} restated]
Let $\min_{X}f(X)=0$ for $M^{\star}\ne0$. Suppose that $X$ satisfies
$f(X)\le\rho^{2}\cdot(1-\delta)\lambda_{r^{\star}}^{2}(M^{\star})$
with radius $\rho>0$ that satisfies $\rho^{2}/(1-\rho^{2})\le(1-\delta^{2})/2$.
Then, we have 
\[
\eta\le C_{\ub}\|XX^{T}-M^{\star}\|_{F}\quad\implies\quad\|\nabla f(X)\|_{P*}^{2}\ge\mu_{P}f(X)
\]
where 
\begin{equation}
\mu_{P}=\left(\sqrt{\frac{1+\delta^{2}}{2}}-\delta\right)^{2}\cdot\min\left\{ \left(1+\frac{C_{\ub}}{\sqrt{2}-1}\right)^{-1},\left(1+3C_{\ub}\sqrt{\frac{(r-r^{\star})}{1-\delta^{2}}}\right)^{-1}\right\}. \label{eq:mueq2}
\end{equation}
\end{thm}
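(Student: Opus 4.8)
The plan is to lower-bound $\|\nabla f(X)\|_{P*}$ by testing the gradient against a single well-chosen direction $D$, using $\|\nabla f(X)\|_{P*}\ge\langle\nabla f(X),D\rangle/\|D\|_{P}$, and then showing simultaneously that (i) the linear progress $\langle\nabla f(X),D\rangle$ is at least a constant (involving $\delta$) times $f(X)$, and (ii) $\|D\|_{P}^{2}$ is at most a constant (involving $C_{\ub}$, $\delta$ and $r-r^{\star}$) times $\|E\|_{F}^{2}$, where $E=XX^{T}-M^{\star}$. Since $f(X)\le(1+\delta)\|E\|_{F}^{2}$ and $\|E\|_{F}^{2}\le f(X)/(1-\delta)$ by RIP, squaring (i) and dividing by (ii) and converting yields the stated $\mu_{P}$. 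Two facts make this go through with RIP of order only $2r$: first, $\nabla f(X)=4\,\mathcal A^{*}\mathcal A(E)X$, which follows from \eqref{eq:grad_v}; and second, choosing $D$ so that both $XD^{T}+DX^{T}$ and $E$ have their row- and column-spaces inside $\operatorname{span}[X,Z]$ --- a subspace of dimension at most $r+r^{\star}\le 2r$ --- so every matrix arising in the polarization of the cross term $\langle\mathcal A(XD^{T}+DX^{T}),\mathcal A(E)\rangle$ has rank at most $2r$ and RIP applies verbatim (the rank bound comes from the subspace, not from summing the ranks of the two factors).

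For the test direction I would pad the ground truth as $\bar Z=[\,Z\ \ 0\,]\in\R^{n\times r}$ so that $\bar Z\bar Z^{T}=M^{\star}$, take the aligning rotation $R\in O(r)$ minimizing $\|X-\bar ZR\|_{F}$, and set $D=\Delta\defeq X-\bar ZR$. The identity $E=X\Delta^{T}+\Delta X^{T}-\Delta\Delta^{T}$ then gives $\langle\nabla f(X),\Delta\rangle=2f(X)+2\langle\mathcal A(\Delta\Delta^{T}),\mathcal A(E)\rangle$, so the linear progress is $2f(X)$ minus a correction that is cubic and quartic in $\Delta$. Keeping this correction small relative to $f(X)$ --- even when $r-r^{\star}$ is large --- is exactly where the neighborhood hypothesis $f(X)\le\rho^{2}(1-\delta)\lambda_{r^{\star}}^{2}(M^{\star})$ and the condition $\rho^{2}/(1-\rho^{2})\le(1-\delta^{2})/2$ enter; the crude bound $\|\Delta\Delta^{T}\|_{F}\le\|\Delta\|_{F}^{2}$ is too lossy, and one instead re-uses the identity for $E$ to rewrite $\langle E,\Delta\Delta^{T}\rangle$ self-referentially and absorb it, producing $\langle\nabla f(X),\Delta\rangle\gtrsim(\sqrt{(1+\delta^{2})/2}-\delta)\,\|E\|_{F}^{2}$ up to RIP slack --- this is the leading factor in \eqref{eq:mueq2}.

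For step (ii) I would split $\Delta$ along the SVD $X=U\Sigma V^{T}$ into an ``aligned'' part supported on the top-$r^{\star}$ singular directions of $X$ (whose span is $O(\|E\|_{F}/\lambda_{r^{\star}})$-close to $\operatorname{col}(Z)$, by Weyl's inequality applied to $XX^{T}=M^{\star}+E$) and an ``extra'' part $\Delta_{\mathrm{ex}}$ supported on the remaining $r-r^{\star}$ directions. For the aligned part I would invoke the exactly-parameterized estimate that the preconditioned distance is comparable to the matrix distance, $\|\Delta_{\mathrm{align}}(X^{T}X)^{1/2}\|_{F}\lesssim\|E\|_{F}$, and note that the $\eta I$ contribution to it is $O(\sqrt{\eta}\,\|E\|_{F}/\sqrt{\lambda_{r^{\star}}})=O(\rho^{1/2}\|E\|_{F})$ since $\eta\le C_{\ub}\|E\|_{F}\le C_{\ub}\rho\lambda_{r^{\star}}(M^{\star})$. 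For the extra part, the key estimate is $\sum_{i>r^{\star}}\sigma_{i}(X)^{4}\le\|E\|_{F}^{2}$ (Weyl, since $\lambda_{i}(XX^{T})\le\lambda_{i-r^{\star}}(E)$ for $i>r^{\star}$), whence
\[
\|\Delta_{\mathrm{ex}}(X^{T}X+\eta I)^{1/2}\|_{F}^{2}=\sum_{i>r^{\star}}\sigma_{i}^{2}(\sigma_{i}^{2}+\eta)\le\|E\|_{F}^{2}+\eta\sum_{i>r^{\star}}\sigma_{i}^{2}\le\Big(1+C_{\ub}\sqrt{r-r^{\star}}\Big)\|E\|_{F}^{2}
\]
by Cauchy--Schwarz --- this produces the $\sqrt{r-r^{\star}}$ and the $(1-\delta^{2})^{-1/2}$ dependence in the second branch of the $\min$ in \eqref{eq:mueq2}; the first branch comes from the alternative of bounding the loss from damping on the large directions, $\sigma_{i}^{2}/(\sigma_{i}^{2}+\eta)\ge(1+C_{\ub}\rho/(1-\rho))^{-1}$ for $i\le r^{\star}$.

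Putting the pieces together, $\|\nabla f(X)\|_{P*}^{2}\ge\langle\nabla f(X),\Delta\rangle^{2}/\|\Delta\|_{P}^{2}\gtrsim f(X)$ with the constant as stated, after the RIP conversions. I expect the main obstacle to be the alignment estimate $\|\Delta_{\mathrm{align}}(X^{T}X)^{1/2}\|_{F}\lesssim\|E\|_{F}$ in this rank-deficient, over-parameterized setting: the classical proofs assume $\bar Z$ has full column rank $r$, whereas here $\operatorname{rank}(\bar Z)=r^{\star}<r$, the minimization over $O(r)$ is degenerate, and one must verify that $R$ aligns the ``large'' directions of $X$ with $\operatorname{col}(Z)$ without the extra directions contaminating the bound. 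Handling that coupling, absorbing the higher-order terms in the linear progress using the precise radius condition, and carrying every constant explicitly are what make the argument lengthy; the remaining ingredients --- Weyl's inequality, $\nabla f(X)=4\mathcal A^{*}\mathcal A(E)X$, RIP-preserved Cauchy--Schwarz, and the bound $\sigma_{\max}(X(X^{T}X+\eta I)^{-1/2})\le1$ from \eqref{eq:XPsigma} --- are routine.
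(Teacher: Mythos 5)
Your route is genuinely different from the paper's, and it has a gap at its load-bearing step. The paper never tests the gradient against the Procrustes direction $\Delta=X-\bar ZR$. Instead it proves (Lemma~\ref{lem:scaled}) the family of bounds $\|\nabla f(X)\|_{P*}^{2}\ge\frac{2(\cos\theta_{k}-\delta)^{2}}{1+\eta/\lambda_{k}}\|E\|_{F}^{2}$, where $\theta_{k}$ measures how much of $E$ lives outside the top-$k$ eigenspace of $XX^{T}$, and then runs an inductive dichotomy over $k$: either $\cos\theta_{k}\ge\mu:=\sqrt{(1+\delta^{2})/2}$ and one of the two branches of the $\min$ follows, or $\sin\theta_{k}$ is large, which (via Lemma~\ref{lem:sintheta} and $\|R\|_{F}\le\sqrt{r-k}\,\lambda_{k+1}$) forces $\lambda_{k+1}(X^{T}X)\gtrsim\|E\|_{F}\sqrt{(1-\mu^{2})/(r-k)}$, so the induction advances; it terminates at $k=r$ where alignment is automatic. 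This dichotomy is the theorem's central idea, it is what produces the leading factor $(\mu-\delta)^{2}=(\sqrt{(1+\delta^{2})/2}-\delta)^{2}$ (as an alignment threshold chosen so that $1-\mu^{2}=(1-\delta^{2})/2$ matches the radius condition), and it is entirely absent from your plan. Your $\|D\|_{P}^{2}$ estimate for the extra directions, $\sum_{i>r^{\star}}\sigma_{i}^{2}(\sigma_{i}^{2}+\eta)\le(1+C_{\ub}\sqrt{r-r^{\star}})\|E\|_{F}^{2}$ via $\sum_{i>r^{\star}}\sigma_{i}^{4}\le\|E\|_{F}^{2}$ and Cauchy--Schwarz, is correct and does mirror the mechanism behind \eqref{sigup}; that part of your intuition is right.

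The unproved step is the linear-progress bound $\langle\nabla f(X),\Delta\rangle\gtrsim f(X)$, and it is exactly the point where the classical exact-rank argument breaks under over-parameterization. You have $\langle\nabla f(X),\Delta\rangle=2f(X)+2\langle\mathcal{A}(\Delta\Delta^{T}),\mathcal{A}(E)\rangle$, and in the exact-rank case the correction is negligible because $\|\Delta\|_{F}^{2}\lesssim\|E\|_{F}^{2}/\lambda_{r}(M^{\star})\le\rho\|E\|_{F}$. Here that fails: already for $X=\mathrm{diag}(1,\xi)$, $M^{\star}=e_{1}e_{1}^{T}$ one has $\|\Delta\Delta^{T}\|_{F}=\|E\|_{F}$ exactly, so the correction is of the \emph{same order} as the main term and cannot be treated as a perturbation. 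Even after the sharpest generic estimates (RIP-Cauchy--Schwarz gives a $-2\delta\|\Delta\Delta^{T}\|_{F}\|E\|_{F}\ge-2\delta(1+o(1))\|E\|_{F}^{2}$ slack, and $\langle\Delta\Delta^{T},E\rangle=\|X^{T}\Delta\|_{F}^{2}-\|Q^{T}\Delta\|_{F}^{2}$ with $Q=\bar ZR$ costs another fraction of $\|E\|_{F}^{2}$), the resulting lower bound has the form $\bigl(2-c_{1}-c_{2}\delta/(1-\delta)\bigr)f(X)$, which is positive only for $\delta$ below an absolute threshold --- whereas the theorem holds with a positive $\mu_{P}$ for every $\delta<1$. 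Your proposed rescue, ``re-use the identity for $E$ self-referentially and absorb,'' is not an argument, and there is no computation in this framework that would output the specific constant $(\sqrt{(1+\delta^{2})/2}-\delta)^{2}$. You also flagged the wrong obstacle: the rank-deficient Procrustes alignment estimate you worry about is the tractable part; the cross term in the linear progress is what forces the paper to abandon this direction entirely and work with the eigenspace-alignment induction instead.
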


The theorem is a consequence of the following lemma, which shows that
the PL constant $\mu_{P}>0$ is driven in part by the alignment between
the model $XX^{T}$ and the ground truth $M^{\star}$, and in part
in the relationship between $\eta$ and the singular values of $X$.
We defer its proof to Section~\ref{subsec:Proof-of-Gradient} and first use it to prove Theorem~\ref{thm:pl}.
\begin{lem}[Gradient lower bound]
\label{lem:scaled} Let $XX^{T}=U\Lambda U^{T}$ where $\Lambda=\mathrm{diag}(\lambda_{1},\dots,\lambda_{r})$
, $\lambda_{1}\ge\cdots\ge\lambda_{r}\geq 0$, and $U^{T}U=I_{r}$ denote
the usual eigenvalue decomposition. Let $U_{k}$ denote the first
$k$ columns of $U$. Then, we have 
\begin{equation}
\|\nabla f(X)\|_{P^{*}}^{2}\ge\max_{k\in\{1,2,\dots,r\}}\frac{2(\cos\theta_{k}-\delta)^{2}}{1+\eta/\lambda_{k}}\|XX^{T}-M^{\star}\|_{F}^{2}\label{eq:maxcos}
\end{equation}
where each $\theta_{k}$ is defined 
\begin{equation}
\sin\theta_{k}=\frac{\left\Vert \left(I-U_{k}U_{k}^{T}\right)(XX^{T}-M^{\star})\left(I-U_{k}U_{k}^{T}\right)\right\Vert _{F}}{\left\Vert XX^{T}-M^{\star}\right\Vert _{F}}.\label{sin}
\end{equation}
\end{lem}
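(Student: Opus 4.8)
The plan is to lower-bound $\|\nabla f(X)\|_{P*}$ by plugging one carefully chosen test direction into the variational formula \eqref{eq:dPnrm}. First I would use rotational invariance: the right-hand side of \eqref{eq:maxcos} depends on $X$ only through $XX^{T}$, and $\|\nabla f(X)\|_{P*}$ is unchanged under $X\mapsto XR$ for $R\in O(r)$ (since $\nabla f(XR)=\nabla f(X)R$ and $P\mapsto R^{T}PR$), so I may assume $X=U\Lambda^{1/2}$. Then $P=\Lambda+\eta I_{r}$ and $\hat X\eqdef XP^{-1/2}=U\hat\Lambda^{1/2}$ with $\hat\Lambda=\mathrm{diag}(\hat\lambda_{1},\dots,\hat\lambda_{r})$, $\hat\lambda_{j}=\lambda_{j}/(\lambda_{j}+\eta)=(1+\eta/\lambda_{j})^{-1}$. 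Writing a test direction as $Y=ZP^{-1/2}$ gives $\|Y\|_{P}=\|Z\|_{F}$ and $XY^{T}+YX^{T}=\hat XZ^{T}+Z\hat X^{T}$, so \eqref{eq:dPnrm} together with \eqref{eq:grad_v} yields, for every nonzero $Z$,
\[
\|\nabla f(X)\|_{P*}\;\ge\;\frac{2\,\langle\mathcal{A}(\hat XZ^{T}+Z\hat X^{T}),\,\mathcal{A}(E)\rangle}{\|Z\|_{F}} .
\]
I then fix an index $k$ with $\lambda_{k}>0$ (when $\lambda_{k}=0$ the $k$-th term of \eqref{eq:maxcos} vanishes) and build $Z$ so that $\hat XZ^{T}+Z\hat X^{T}$ equals exactly the part of $E$ that overlaps $\mathrm{range}(U_{k})$.

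Concretely, put $P_{k}=U_{k}U_{k}^{T}$ and $G^{\star}\eqdef E-(I-P_{k})E(I-P_{k})=P_{k}E+EP_{k}-P_{k}EP_{k}$. Since $G^{\star}$ is Frobenius-orthogonal to $(I-P_{k})E(I-P_{k})$, one gets $\langle G^{\star},E\rangle=\|G^{\star}\|_{F}^{2}=\|E\|_{F}^{2}-\|(I-P_{k})E(I-P_{k})\|_{F}^{2}=\cos^{2}\theta_{k}\,\|E\|_{F}^{2}$ by the definition \eqref{sin} of $\theta_{k}$. The identity $G^{\star}=U_{k}C^{T}+CU_{k}^{T}$ with $C\eqdef(I-\tfrac12P_{k})EU_{k}$ then lets me realize $G^{\star}=\hat XZ^{T}+Z\hat X^{T}$ exactly, by taking $Z$ whose first $k$ columns are $C\hat\Lambda_{k}^{-1/2}$ and whose remaining columns vanish, where $\hat\Lambda_{k}=\mathrm{diag}(\hat\lambda_{1},\dots,\hat\lambda_{k})\succ0$. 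For this $Z$, $\|Z\|_{F}^{2}=\tr{\hat\Lambda_{k}^{-1}C^{T}C}\le\hat\lambda_{k}^{-1}\|C\|_{F}^{2}$, and expanding $C^{T}C=U_{k}^{T}E(I-\tfrac34P_{k})EU_{k}$ gives $\|C\|_{F}^{2}=\|EP_{k}\|_{F}^{2}-\tfrac34\|P_{k}EP_{k}\|_{F}^{2}$, which is at most $2\|G^{\star}\|_{F}^{2}$ because $\|EP_{k}\|_{F}\ge\|P_{k}EP_{k}\|_{F}$ and $\|G^{\star}\|_{F}^{2}=2\|EP_{k}\|_{F}^{2}-\|P_{k}EP_{k}\|_{F}^{2}$. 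Hence $\|Z\|_{F}\le\sqrt2\,\|G^{\star}\|_{F}/\sqrt{\hat\lambda_{k}}$.

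To finish, I would estimate the numerator with RIP in its polarized form $\langle\mathcal{A}(G),\mathcal{A}(H)\rangle\ge\langle G,H\rangle-\delta\|G\|_{F}\|H\|_{F}$, obtaining $\langle\mathcal{A}(G^{\star}),\mathcal{A}(E)\rangle\ge\|G^{\star}\|_{F}^{2}-\delta\|G^{\star}\|_{F}\|E\|_{F}=\|G^{\star}\|_{F}(\cos\theta_{k}-\delta)\|E\|_{F}$. Substituting this and the bound on $\|Z\|_{F}$ into the displayed inequality, the factor $\|G^{\star}\|_{F}$ cancels and $2/\sqrt2=\sqrt2$, leaving $\|\nabla f(X)\|_{P*}\ge\sqrt{2\hat\lambda_{k}}\,(\cos\theta_{k}-\delta)\|E\|_{F}$; squaring this (for the indices with $\cos\theta_{k}\ge\delta$) and maximizing over $k$ is exactly \eqref{eq:maxcos}.

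The crux — and where I expect to spend the most effort — is the second paragraph: recognizing that the right test matrix is $G^{\star}$ (the Frobenius projection of $E$ onto the matrices not entirely supported off $\mathrm{range}(U_{k})$), checking that it is \emph{exactly} attainable by a preconditioned gradient direction, and verifying that the Frobenius cost of attaining it inflates by precisely the factor $\hat\lambda_{k}^{-1/2}=\sqrt{1+\eta/\lambda_{k}}$ — with the numerical constants conspiring so that the two $\sqrt2$ factors match. A secondary technical point is the RIP step, where one checks the polarization applies at the available rank: $G^{\star}$ and $E$ each have rank at most $2r$, and $\langle\mathcal{A}(G^{\star}),\mathcal{A}(E)\rangle$ is controlled via RIP by splitting $E=XX^{T}-M^{\star}$ and using that $XX^{T}$ and $XY^{T}+YX^{T}$ are supported on the subspace $\mathrm{range}(X)+\mathrm{range}(Y)$ of dimension at most $2r$. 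Finally, one notes that the $k$-th term of \eqref{eq:maxcos} is negligible for indices with $\cos\theta_{k}<\delta$, while the index governing the maximum — near $r^{\star}$, inside the neighborhood of Theorem~\ref{thm:pl} — has $\cos\theta_{k}$ close to $1$.
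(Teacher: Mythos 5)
Your argument is correct and is essentially the paper's proof recast in matrix form: your test matrix $G^{\star}=E-(I-U_{k}U_{k}^{T})E(I-U_{k}U_{k}^{T})$ is precisely the projection $\mathbf{U}_{k}\mathbf{U}_{k}^{T}\e$ that the paper obtains from the partial SVD of the preconditioned Jacobian $\J\P^{-1/2}$, your factorization $G^{\star}=U_{k}C^{T}+CU_{k}^{T}$ with the $\hat{\Lambda}_{k}^{-1/2}$ rescaling reproduces the paper's choice $d=\mathbf{V}_{k}\mathbf{\Sigma}_{k}^{-1}\mathbf{U}_{k}^{T}\e$ with $\lambda_{\min}(\mathbf{\Sigma}_{k})=\sqrt{2/(1+\eta/\lambda_{k})}$, and your rank-restricted polarization step is exactly the paper's Lemma~\ref{lem:reform}. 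The constants check out as you computed (in fact $\|C\|_{F}^{2}\le\|G^{\star}\|_{F}^{2}$, so you have slack), and the caveat that squaring is only legitimate for indices with $\cos\theta_{k}\ge\delta$ applies equally to the paper's own proof, where it is likewise resolved by invoking the bound only for such $k$.
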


From Lemma~\ref{lem:scaled}, we see that deriving a PL constant
$\mu_{P}$ requires balancing two goals: 
(1) ensuring that $\cos\theta_{k}$
is large with respect to the RIP constant $\delta$;
(2) ensuring that
$\lambda_{k}(X^{T}X)$ is large with respect to the damping parameter
$\eta$.

As we will soon show, in the case that $k=r$, the corresponding
$\cos\theta_{r}$ is guaranteed to be large with respect to $\delta$,
once $XX^{T}$ converges towards $M^{\star}$. At the same time, we
have by Weyl's inequality 
\begin{equation*}
\lambda_{k}(X^{T}X)=\lambda_{k}(XX^{T})\ge\lambda_{k}(M^{\star})-\|XX^{T}-M^{\star}\|_{F}\text{ for all }k\in\{1,2,\dots,r\}.
\end{equation*}
Therefore, when $k=r^{\star}$ and $XX^{T}$ is close to $M^{\star}$, the corresponding $\lambda_{r^{\star}}(X^{T}X)$
is guaranteed to be large with respect to $\eta$ .
However, in order to use Lemma~\ref{lem:scaled} to derive a PL constant
$\mu_{P}>0$, we actually need $\cos\theta_{k}$ and $\lambda_{k}(X^{T}X)$
to both be large for the \emph{same} value of $k$. It turns out that
when $\eta\gtrsim\|XX^{T}-M^{\star}\|_{F}$, it is possible to prove
this claim using an inductive argument. 

Before we present the complete argument and prove Theorem \ref{thm:pl}, we state one more lemma that will be used in the proof.

\begin{lem}[Basis alignment]
\label{lem:sintheta}Define the $n\times k$ matrix $U_{k}$ in terms
of the first $k$ eigenvectors of $X$ as in Lemma~\ref{lem:scaled}.
Let $Z\in\R^{n\times r^{\star}}$ satisfy $\lambda_{\min}(Z^{T}Z)>0$
and suppose that $\|XX^{T}-ZZ^{T}\|_{F}\le\rho\lambda_{\min}(Z^{T}Z)$
with $\rho\le1/\sqrt{2}$. Then, 
\begin{equation}
\frac{\|Z^{T}(I-U_{k}U_{k}^{T})Z\|_{F}}{\|XX^{T}-ZZ^{T}\|_{F}}\le\frac{1}{\sqrt{2}}\frac{\rho}{\sqrt{1-\rho^{2}}}\quad\text{for all }k\ge r^{\star}.\label{eq:ratio}
\end{equation}
\end{lem}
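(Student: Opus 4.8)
The plan is to prove the inequality first for the single case $k=r^{\star}$ and then get all $k\ge r^{\star}$ for free. Since $U_{r^{\star}}$ consists of the first $r^{\star}$ columns of the fixed ordered eigenbasis $U$, and $U_k$ of the first $k$, we have $\mathrm{range}(U_{r^{\star}})\subseteq\mathrm{range}(U_k)$, hence $I-U_kU_k^{T}\preceq I-U_{r^{\star}}U_{r^{\star}}^{T}$, hence $Z^{T}(I-U_kU_k^{T})Z\preceq Z^{T}(I-U_{r^{\star}}U_{r^{\star}}^{T})Z$ as PSD matrices; the Frobenius norm is monotone along the PSD order, so it suffices to bound the $k=r^{\star}$ quantity. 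From here I would write $Q\defeq I-U_{r^{\star}}U_{r^{\star}}^{T}$, $E\defeq XX^{T}-ZZ^{T}$, $\sigma\defeq\lambda_{\min}(Z^{T}Z)$, and $x\defeq\|Z^{T}QZ\|_{F}$, and aim to derive a scalar quadratic inequality for $x$.

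The derivation rests on two identities. First, $Q$ is the spectral projector onto the bottom $n-r^{\star}$ eigenvectors of $XX^{T}$, so $Q\,XX^{T}=\Lambda_{\mathrm{tail}}\defeq\sum_{i>r^{\star}}\lambda_i u_iu_i^{T}$ with $\Lambda_{\mathrm{tail}}U_{r^{\star}}=0$; substituting $ZZ^{T}=XX^{T}-E$ gives $Q ZZ^{T}U_{r^{\star}}=-QEU_{r^{\star}}$, and splitting the right factor as $I=U_{r^{\star}}U_{r^{\star}}^{T}+Q$ yields the Pythagorean identity $\|QZZ^{T}\|_{F}^{2}=\|QEU_{r^{\star}}\|_{F}^{2}+\|QZZ^{T}Q\|_{F}^{2}=\|QEU_{r^{\star}}\|_{F}^{2}+x^{2}$, where $\|QZZ^{T}Q\|_{F}=\|Z^{T}QZ\|_{F}=x$ because $QZZ^{T}Q=(QZ)(QZ)^{T}$ and $Z^{T}QZ=(QZ)^{T}(QZ)$ have the same nonzero singular values. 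Decomposing the symmetric matrix $E$ into its four blocks relative to the complementary projectors $U_{r^{\star}}U_{r^{\star}}^{T}$ and $Q$, the two off-diagonal blocks are transposes of each other, so $2\|QEU_{r^{\star}}\|_{F}^{2}\le\|E\|_{F}^{2}$, and therefore $\|QZZ^{T}\|_{F}^{2}\le x^{2}+\tfrac12\|E\|_{F}^{2}$. Second, in the other direction, $\|QZZ^{T}\|_{F}^{2}=\mathrm{Tr}\big((Z^{T}QZ)(Z^{T}Z)\big)\ge\sigma\,\mathrm{Tr}(Z^{T}QZ)\ge\sigma\,\|Z^{T}QZ\|_{F}=\sigma x$, using $Z^{T}Z\succeq\sigma I$ and $\mathrm{Tr}(M)\ge\|M\|_{F}$ for PSD $M$. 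Combining the two bounds gives $x^{2}-\sigma x+\tfrac12\|E\|_{F}^{2}\ge 0$.

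Solving this quadratic is the last step. The hypothesis $\rho\le 1/\sqrt2$ is exactly what makes the discriminant $\sigma^{2}-2\|E\|_{F}^{2}\ge\sigma^{2}(1-2\rho^{2})$ nonnegative, so the roots $t_{\pm}=\tfrac12\big(\sigma\pm\sqrt{\sigma^{2}-2\|E\|_{F}^{2}}\big)$ are real; assuming $x$ lies on the small branch, $x\le t_{-}=\|E\|_{F}^{2}/\big(\sigma+\sqrt{\sigma^{2}-2\|E\|_{F}^{2}}\big)$, so $x/\|E\|_{F}\le\|E\|_{F}/\big(\sigma+\sqrt{\sigma^{2}-2\|E\|_{F}^{2}}\big)$, which is increasing in $\|E\|_{F}$; plugging in $\|E\|_{F}\le\rho\sigma$ gives $x/\|E\|_{F}\le\rho/\big(1+\sqrt{1-2\rho^{2}}\big)$, and one finishes by checking the elementary inequality $\sqrt2\,\sqrt{1-\rho^{2}}\le 1+\sqrt{1-2\rho^{2}}$ (valid whenever $\rho^{2}\le 1/2$, seen by squaring), which turns the last bound into the claimed $\tfrac{1}{\sqrt2}\,\rho/\sqrt{1-\rho^{2}}$.

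The main obstacle is the branch selection in the final step: the quadratic inequality alone is also consistent with $x\ge t_{+}$, so one must certify $x$ is on the small branch. I would do this by a continuity/limiting argument — deform $XX^{T}$ toward $ZZ^{T}$ along $M_{\tau}=(1-\tau)XX^{T}+\tau ZZ^{T}$, along which (by Weyl, using $\lambda_{r^{\star}+1}(ZZ^{T})=0$ and $\lambda_{r^{\star}}(ZZ^{T})=\sigma$) the gap $\lambda_{r^{\star}}(M_{\tau})-\lambda_{r^{\star}+1}(M_{\tau})$ stays bounded away from $0$, so $U_{r^{\star}}U_{r^{\star}}^{T}$ varies continuously, $x$ is continuous in $\tau$, and $x\to 0$ as $\tau\to 1$ (the top-$r^{\star}$ eigenspace of $ZZ^{T}$ is $\mathrm{col}(Z)$); since $x$ can never take a value strictly between $t_{-}$ and $t_{+}$, it must stay $\le t_{-}$. (The reduction to $k=r^{\star}$ is precisely what keeps this deformation argument clean, since for $k>r^{\star}$ the relevant spectral gap can close.) The only other delicate points are bookkeeping: squeezing out the sharp constant $\tfrac12$ rather than $1$ or $2$ — this is exactly what the off-diagonal-block cancellation of $E$ buys — and avoiding a direct appeal to Davis--Kahan, which would introduce a spurious factor $\lambda_{1}(Z^{T}Z)/\lambda_{\min}(Z^{T}Z)$.
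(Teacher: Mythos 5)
Your route is genuinely different from the paper's. The paper rotates into a basis where $X$ is block diagonal, writes $Z=[Z_1;Z_2]$, identifies $\|Z^T(I-U_kU_k^T)Z\|_F=\|Z_2Z_2^T\|_F$, and does the branch selection through a separate lemma ($\lambda_{\min}(Z_1^TZ_1)\ge\lambda_{\max}(Z_2^TZ_2)$, proved by contradiction via a semidefinite relaxation of the radius constraint). Your projector identities are correct and arguably cleaner: the reduction to $k=r^\star$ by PSD monotonicity matches the paper, the Pythagorean split $\|QZZ^T\|_F^2=\|QEU_{r^\star}\|_F^2+x^2$, the off-diagonal bound $2\|QEU_{r^\star}\|_F^2\le\|E\|_F^2$, and the trace lower bound $\|QZZ^T\|_F^2\ge\sigma x$ all check out, and the final algebra (including $\sqrt{2}\sqrt{1-\rho^2}\le1+\sqrt{1-2\rho^2}$) recovers the stated constant.

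The genuine gap is exactly where you flagged it: branch selection. You justify continuity of $U_{r^\star}U_{r^\star}^T$ along $M_\tau=(1-\tau)XX^T+\tau ZZ^T$ by asserting that the spectral gap $\lambda_{r^\star}(M_\tau)-\lambda_{r^\star+1}(M_\tau)$ stays bounded away from zero ``by Weyl.'' Weyl only gives $\lambda_{r^\star}(M_\tau)\ge\sigma-(1-\tau)\|E\|_F$ and $\lambda_{r^\star+1}(M_\tau)\le(1-\tau)\|E\|_F$, i.e.\ a gap lower bound of $\sigma\bigl(1-2(1-\tau)\rho\bigr)$, which at $\tau=0$ equals $\sigma(1-2\rho)$ and is negative for $\rho>1/2$. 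Since the lemma must cover all $\rho\le1/\sqrt{2}$, your argument as written does not exclude an eigenvalue crossing near $\tau=0$, at which point $U_{r^\star}$ (and hence $x_\tau$) can jump discontinuously over the forbidden interval. The repair is to replace Weyl by Mirsky's inequality, $\sum_i(\lambda_i(M_\tau)-\lambda_i(ZZ^T))^2\le\|M_\tau-ZZ^T\|_F^2$: a crossing $\lambda_{r^\star}(M_\tau)=\lambda_{r^\star+1}(M_\tau)=c$ forces $(c-\sigma)^2+c^2\le(1-\tau)^2\rho^2\sigma^2$, hence $(1-\tau)^2\rho^2\ge1/2$, which is possible only in the boundary case $\tau=0$, $\rho=1/\sqrt{2}$ (handled separately, as the discriminant then vanishes and $t_-=t_+$). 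A second, smaller bookkeeping point: the forbidden interval $(t_-(\tau),t_+(\tau))$ itself moves with $\tau$ because $\|E_\tau\|_F=(1-\tau)\|E\|_F$; it is cleaner to run the intermediate-value argument against the fixed threshold $\sigma/2$, since $x_\tau=\sigma/2$ together with the quadratic inequality forces $(1-\tau)^2\|E\|_F^2\ge\sigma^2/2$, again impossible for $\rho<1/\sqrt{2}$. With those two repairs your proof is complete and yields a marginally sharper bound, $\rho/(1+\sqrt{1-2\rho^2})$, than the one stated.
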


Essentially, this lemma states that as the rank-$r$ matrix $XX^{T}$ converges
to the rank-$r^{\star}$ matrix $M^{\star}$, the top $r^{\star}$
eigenvectors of $XX^{T}$ must necessarily rotate into alignment with
$M^{\star}$. In fact, this is easily verified
to be sharp by considering the $r=r^{\star}=1$ case; we defer its
proof to Section~\ref{subsec:alignment}.

With Lemma~\ref{lem:scaled} and Lemma~\ref{lem:sintheta}, we are
ready to prove Theorem \ref{thm:pl}. 
\begin{proof}[Proof of Theorem \ref{thm:pl}]
We pick some $\mu$ satisfying $\delta<\mu<1$ and prove that $\frac{\rho^{2}}{1-\rho^{2}}\le1-\mu^{2}$
implies $\|\nabla f(X)\|_{P*}^{2}\ge\mu_{P}f(X)$ where 
\begin{equation}
\mu_{P}=(\mu-\delta)^{2}\cdot\min\left\{ \left(1+\frac{C_{\ub}}{\sqrt{2}-1}\right)^{-1},\left(1+3C_{\ub}\sqrt{\frac{r-r^{\star}}{1-\mu^{2}}}\right)^{-1}\right\} .\label{eq:mup}
\end{equation}
Then, setting $1-\mu^{2}=\frac{1}{2}(1-\delta^{2})$ yields our desired
claim.

To begin, note that the hypothesis $\frac{\rho^{2}}{1-\rho^{2}}\le1-\mu^{2}\le1$
implies $\rho\le1/\sqrt{2}$. Denote $E=XX^{T}-M^{\star}$. We have
\begin{align}
\frac{\|\nabla f(X)\|_{P^{*}}^{2}}{f(X)} & \overset{\text{(a)}}{\ge}\frac{\|\nabla f(X)\|_{P^{*}}^{2}}{(1+\delta)\|E\|_{F}^{2}}\overset{\text{(b)}}{\ge}\frac{2(\cos\theta_{k}-\delta)^{2}}{(1+\delta)(1+\eta/\lambda_{k}(X^{T}X))}\overset{\text{(c)}}{\ge}\frac{(\cos\theta_{k}-\delta)^{2}}{1+\eta/\lambda_{k}(X^{T}X)}\text{ for all }k\ge r^{\star}.\label{eq:gradlb}
\end{align}
Step (a) follows from RIP; Step (b) applies Lemma \ref{lem:scaled};
Step (c) applies $1+\delta\leq2$. Equation (\ref{eq:gradlb}) proves
gradient dominance if we can show that both $\lambda_{k}(X^{T}X)$
and $\cos\theta_{k}$ are large for the same $k$. We begin with $k=r^{\star}$.
Here we have by RIP and by hypothesis
\begin{equation}
(1-\delta)\|XX^{T}-M^{\star}\|_{F}^{2}\leq f(X)\le\rho^{2}\cdot(1-\delta)\lambda_{\min}^{2}(Z^{T}Z),\label{eq:ripEE}
\end{equation}
which by Weyl's inequality yields
\begin{align*}
\lambda_{r^{\star}}(X^{T}X) & =\lambda_{r^{\star}}(XX^{T})\ge\lambda_{r^{\star}}(M^{\star})-\|XX^{T}-M^{\star}\|_{F}\ge(1-\rho)\lambda_{r^{\star}}(M^{\star}).
\end{align*}
This, combined with (\ref{eq:ripEE}) and our hypothesis $\eta\leq C_{\ub}\|XX^{T}-ZZ^{T}\|_{F}$
and $\rho\le1/\sqrt{2}$ gives
\begin{equation}
\frac{\eta}{\lambda_{r^{\star}}(X^{T}X)}\le\frac{\rho C_{\ub}\lambda_{r^{\star}}(M^{\star})}{(1-\rho)\lambda_{r^{\star}}(M^{\star})}=\frac{\rho C_{\ub}}{1-\rho}\le\frac{C_{\ub}}{\sqrt{2}-1},\label{eq:gradub}
\end{equation}
which shows that $\lambda_{r^{\star}}(X^{T}X)$ is large. If $\cos\theta_{k}\ge\mu$
is also large, then substituting (\ref{eq:gradub}) into (\ref{eq:gradlb})
yields gradient dominance 
\[
\frac{\|\nabla f(X)\|_{P^{*}}^{2}}{f(X)}\ge(\mu-\delta)^{2}\left(1+\frac{C_{\ub}}{\sqrt{2}-1}\right)^{-1},
\]
and this yields the first term in (\ref{eq:mup}). If $\cos\theta_{k}<\mu$
is actually small, then $\sin^{2}\theta_{k}>1-\mu^{2}$ is large.
We will show that this lower bound on $\sin\theta_k$ actually implies that $\lambda_{k+1}(X^TX)$ will be large. 

To see this,
let us write $XX^{T}=U_{k}\Lambda_{k}U_{k}^{T}+R$ where
the $n\times k$ matrix of eigenvectors $U_{k}$ is defined as in
Lemma~\ref{lem:scaled}, $\Lambda_{k}$ is the corresponding
$k\times k$ diagonal matrix of eigenvalues, and $U_k^TR=0$. Denote $\Pi_k = I-U_kU_k^T$ and note that 
\[
\|\Pi_{k}(X X^{T}-M^{\star}) \Pi_{k}\|_F=\|\Pi_{k} X X^{T} \Pi_{k}-\Pi_{k} M^{\star} \Pi_{k}\|_F=\|R-\Pi_{k} M^{\star} \Pi_{k}\|_F.
\]
By the subaddivity of the norm $
\|R-\Pi_{k} M^{\star} \Pi_{k}\|_{F} \leq \|R\|_{F}+\|\Pi_{k} M^{\star} \Pi_{k}\|_{F}.$
Dividing both sides by $\|E\|_F$ yields
\[
\sin \theta_k = \frac{\|R-\Pi_{k} M^{\star} \Pi_{k}\|_{F}}{\|E\|_F} \leq \frac{\|\Pi_{k} M^{\star} \Pi_{k}\|_{F}}{\|E\|_{F}}+\frac{\|R\|_{F}}{\|E\|_{F}}.
\]
Since $\rho\leq 1/\sqrt{2}$ by assumption, Lemma \ref{lem:sintheta} yields
\[
\frac{\|\Pi_{k} M^{\star} \Pi_{k}\|_{F}}{\|E\|_{F}} \leq \frac{1}{\sqrt{2}} \frac{\rho}{\sqrt{1-\rho^{2}}} \leq \rho.
\]
In addition, 
\[
\|R\|_F \leq \|R\| \cdot \sqrt{\mathrm{rank}(R)} = \lambda_{k+1}(XX^T)\cdot \sqrt{r-k}.
\]
Combining the two inequalities above we get 
\[
\sqrt{1-\mu^2} \leq \sin \theta_{k} \leq \frac{1}{\sqrt{2}} \frac{\rho}{\sqrt{1-\rho^{2}}}+\sqrt{r-k} \cdot \frac{\lambda_{k+1}\left(X^{T} X\right)}{\|E\|_{F}}.
\]
Rearranging, we get
\[
\frac{\lambda_{k+1}\left(X^{T} X\right)}{\|E\|_{F}} \geq \frac{1}{\sqrt{r-k}} \left(\sqrt{1-\mu^2} - \frac{1}{\sqrt{2}} \frac{\rho}{\sqrt{1-\rho^{2}}}\right) \geq \left(1-\frac{1}{\sqrt{2}}\right) \sqrt{\frac{1-\mu^2}{r-k}}.
\]
Note that the last inequality above follows from the assumption that $\frac{\rho^2}{1-\rho^{2}}\leq 1-\mu^2$. Now 
substituting $\eta\leq C_{\ub}\|XX^{T}-M^{\star}\|_{F}$ and $r-k\leq r-r^{\star}$ and noting that $\left(1-\frac{1}{\sqrt{2}}\right)\leq 1/3$ we get
\begin{equation}
\frac{\eta}{\lambda_{k+1}(X^{T}X)}\le C_{\ub}\frac{\|XX^{T}-M^{\star}\|_{F}}{\lambda_{k+1}(X^{T}X)}\le 3C_{\ub}\sqrt{\frac{r-k}{1-\mu^{2}}}\le 3C_{\ub}\sqrt{\frac{r-r^{\star}}{1-\mu^{2}}},\label{sigup}
\end{equation}
which shows that $\lambda_{k+1}(X^{T}X)$ is large.

If $\cos\theta_{k+1}\ge\mu$
is also large, then substituting (\ref{sigup}) into (\ref{eq:gradlb})
yields gradient dominance
\begin{equation}
\frac{\|\nabla f(X)\|_{P^{*}}^{2}}{f(X)}\geq\frac{(\cos\theta_{k+1}-\delta)^{2}}{1+\eta/\lambda_{k+1}^{2}(X)}\ge(\mu-\delta)^{2}\left(1+3C_{\ub}\sqrt{\frac{r-r^{\star}}{1-\mu^{2}}}\right)^{-1},\label{eq:gdt2}
\end{equation}
and this yields the second term in (\ref{eq:mup}) so we are done. If $\cos\theta_{k+1}<\mu$ then we can simply repeat the argument above to show that $\lambda_{k+1}(X^TX)$ is large. We can repeat this process until $k+1=r$. At this point, we have
\[
\cos^{2}\theta_{r}=1-\sin^{2}\theta_{r}\ge1-\frac{1}{2}\frac{\rho^{2}}{1-\rho^{2}}\ge\mu^{2}
\]
where we used our hypothesis $1-\mu^{2}\ge\frac{\rho^{2}}{1-\rho^{2}}\ge\frac{1}{2}\frac{\rho^{2}}{1-\rho^{2}}$,
and substituting (\ref{sigup}) into (\ref{eq:gradlb}) again yields
gradient dominance in (\ref{eq:gdt2}). 
\end{proof}

\subsection{\label{subsec:Proof-of-Gradient}Proof of Gradient Lower Bound (Lemma
\ref{lem:scaled})}

In this section we prove Lemma~\ref{lem:scaled}, where we prove
gradient dominance $\|\nabla f(X)\|_{P^{*}}^{2}\ge\mu_{P}f(X)$ with
a PL constant $\mu_{P}$ that is proportional to $\cos\theta_{k}-\delta$
and to $\lambda_{k}(X^{T}X)/\eta$. We first prove the following result which will be useful in the proof of Lemma~\ref{lem:scaled}.

\begin{lem}
\label{lem:reform}Let $\AA$ satisfy RIP with parameters $(\zeta,\delta)$,
where $\zeta=\rank([X,Z])$. Then, we have 
\begin{equation}
\label{eq:max_y}
\|\nabla f(X)\|_{P*}\ge\max_{\|Y\|_{P}\le1}\langle XY^{T}+YX^{T},E\rangle-\delta\|XY^{T}+YX^{T}\|_{F}\|E\|_{F}
\end{equation}
\end{lem}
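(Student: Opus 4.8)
The plan is to unfold both sides of \eqref{eq:max_y} through the variational definition of the dual $P$-norm and to bridge them with a Cauchy--Schwarz inequality that survives RIP. Write $E = XX^T - M^\star = XX^T - ZZ^T$. By the definition \eqref{eq:dPnrm} of $\|\cdot\|_{P*}$ together with the gradient formula \eqref{eq:grad_v}, the left-hand side of \eqref{eq:max_y} is $\|\nabla f(X)\|_{P*} = \max_{\|Y\|_P \le 1} \langle \nabla f(X), Y\rangle = \max_{\|Y\|_P \le 1} 2\langle \AA(XY^T + YX^T), \AA(E)\rangle$, while the right-hand side is $\max_{\|Y\|_P \le 1} \psi(Y)$ with $\psi(Y) := \langle XY^T + YX^T, E\rangle - \delta\|XY^T + YX^T\|_F \|E\|_F$. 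Since $\psi(0) = 0$, this maximum is nonnegative; combined with the factor $2$ in the gradient, this is what will let me ignore the sign of $\psi$. So it suffices to exhibit, for a $Y$ attaining $\max \psi$, the RIP-robust estimate $2\langle \AA(XY^T + YX^T), \AA(E)\rangle \ge \psi(Y)$.

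First I would reduce to $Y$ supported in a low-dimensional subspace. Let $\Pi$ be the orthogonal projector onto $\mathcal{V} := \mathrm{col}([X,Z])$, a space of dimension $\zeta$. Replacing $Y$ by $\Pi Y$ does not enlarge $\|Y\|_P = \|YP^{1/2}\|_F$ because $\Pi$ is a contraction. It also does not decrease $\psi$: using $\Pi X = X$ and $\Pi E\Pi = E$ one checks $X(\Pi Y)^T + (\Pi Y)X^T = \Pi(XY^T + YX^T)\Pi$, so the linear term is unchanged, $\langle E, \Pi(XY^T+YX^T)\Pi\rangle = \langle E, XY^T + YX^T\rangle$, whereas the penalty $\|\Pi(XY^T+YX^T)\Pi\|_F \le \|XY^T+YX^T\|_F$ can only shrink. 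Hence $\max_{\|Y\|_P \le 1}\psi(Y) = \max\{\psi(Y) : \|Y\|_P \le 1,\ \mathrm{col}(Y) \subseteq \mathcal{V}\}$, and I may fix a maximizer $Y^\star$ with $\mathrm{col}(Y^\star) \subseteq \mathcal{V}$, for which $\psi(Y^\star) \ge \psi(0) = 0$.

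Next I would invoke RIP on $\mathcal{V}$. Both $E$ and $H := XY^{\star T} + Y^\star X^T$ are symmetric with column and row spaces inside $\mathcal{V}$, so for every $t > 0$ the matrix $tE \pm t^{-1}H$ has rank at most $\dim \mathcal{V} = \zeta$ and the $(\zeta,\delta)$-RIP hypothesis applies to it. Feeding $tE + t^{-1}H$ and $tE - t^{-1}H$ into the RIP bounds and using the polarization identity $4\langle \AA(E),\AA(H)\rangle = \|\AA(tE + t^{-1}H)\|^2 - \|\AA(tE - t^{-1}H)\|^2$ gives $\langle \AA(E), \AA(H)\rangle \ge \langle E, H\rangle - \frac{\delta}{2}(t^2\|E\|_F^2 + t^{-2}\|H\|_F^2)$; optimizing over $t$ (AM--GM, $t^2 = \|H\|_F/\|E\|_F$) yields $\langle \AA(E),\AA(H)\rangle \ge \langle E, H\rangle - \delta\|E\|_F\|H\|_F = \psi(Y^\star)$. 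Therefore $\|\nabla f(X)\|_{P*} \ge \langle \nabla f(X), Y^\star\rangle = 2\langle \AA(E),\AA(H)\rangle \ge 2\psi(Y^\star) \ge \psi(Y^\star) = \max_{\|Y\|_P \le 1}\psi(Y)$, which is exactly \eqref{eq:max_y}.

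The step I expect to be the crux is the subspace reduction in the second paragraph. The naive route --- applying an RIP Cauchy--Schwarz inequality directly to the pair $(E, XY^T + YX^T)$ for a general $Y$ --- breaks down because $XY^T + YX^T$ need not be low rank (its rank is controlled by the unconstrained factor $Y$), so the matrices in play can lie outside the reach of the $(\zeta,\delta)$-RIP. The projection argument is precisely what confines the relevant matrices to the $\zeta$-dimensional range of $[X,Z]$, on which RIP acts as a near-isometry for all symmetric matrices irrespective of rank. Verifying that this single projection simultaneously preserves feasibility $\|Y\|_P \le 1$, leaves the bilinear term $\langle E, XY^T + YX^T\rangle$ fixed, and does not increase the Frobenius penalty is the technical heart; everything after it is standard RIP bookkeeping.
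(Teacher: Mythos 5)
Your proof is correct and follows essentially the same route as the paper's: restrict the maximizer to the column space of $[X,Z]$ via an orthogonal projection (checking that feasibility, the bilinear term, and the Frobenius penalty all behave correctly), then apply an RIP-robust Cauchy--Schwarz bound to the resulting rank-$\zeta$ matrices. The only difference is cosmetic --- you derive the inequality $\langle\AA(E),\AA(H)\rangle\ge\langle E,H\rangle-\delta\|E\|_F\|H\|_F$ from scratch via polarization and AM--GM, whereas the paper invokes it as a standard consequence of RIP.
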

\begin{proof}
Let $Y$ maximize the right-hand side of \eqref{eq:max_y} and let $W$ be the matrix corrresponding to the orthogonal projection onto $\mathrm{range}(X)+\mathrm{range}(Y)$. Set $\tilde{Y} = WY$, then 
\begin{align*}
\langle X \tilde{Y}^{T}+\tilde{Y} X^{T}, E\rangle=\langle X Y^{T}, E W\rangle+\langle Y X^{T}, W E\rangle=\langle X Y^{T}+Y X^{T}, E\rangle.    
\end{align*}
On the other hand, we have 
\begin{align*}
    \|X \tilde{Y}^{T}+\tilde{Y} X^{T}\|_{F}=\|W\left(X Y^{T}+Y X^{T}\right) W\|_{F} \leq \|X Y^{T}+Y X^{T}\|_{F}
\end{align*}
and
\[
\|\tilde{Y}\|_{P} = \|WYP^{1/2}\|_F \leq \|YP^{1/2}\|_F = \|Y\|_P.
\]
This means that $\tilde{Y}$ is feasible and makes the right-hand side at least as large as $Y$. Since $Y$ is the maximizer by definition, we conclude that $\tilde{Y}$ also maximizes the right-hand side of \eqref{eq:max_y}.

By definition, $\operatorname{range}(\tilde{Y}) \subset \operatorname{range}(X)+\operatorname{range}(Z)$, so $(2r,\delta)$-RIP implies 
\[
|\langle A (X \tilde{Y}^{T}+\tilde{Y} X^{T}), A (E)\rangle-\langle X \tilde{Y}^{T}+\tilde{Y} X^{T}, E\rangle| \leq \delta \|X \tilde{Y}^{T}+\tilde{Y} X^{T}\|_F\| E \|_F.
\]
Now we have 
\begin{align*}
   \|\nabla f(X)\|_{P*} &=  \max_{\|Y\|_{P}\le1}\langle \AA(XY^{T}+YX^{T}),\AA(E)\rangle \\
   &\geq \langle \AA(X\tilde{Y}^{T}+\tilde{Y}X^{T}),\AA(E)\rangle \\
   &\geq \langle X\tilde{Y}^{T}+\tilde{Y}X^{T},E\rangle-\delta \|X \tilde{Y}^{T}+\tilde{Y} X^{T}\|_F\| E \|_F \\
   &= \max_{\|Y\|_{P}\le1}\langle XY^{T}+YX^{T},E\rangle-\delta\|XY^{T}+YX^{T}\|_{F}\|E\|_{F}.
\end{align*}
This completes the proof. 
\end{proof}

\begin{proof}[Proof of Lemma \ref{lem:scaled}]
\global\long\def\JU{\mathbf{U}}%
\global\long\def\JS{\mathbf{\Sigma}}%
\global\long\def\JV{\mathbf{V}}%
Let $X=\sum_{i=1}^{r}\sigma_{i}u_{i}v_{i}^{T}$ with $\|u_{i}\|=\|v_{i}\|=1$
and $\sigma_{1}\ge\cdots\ge\sigma_{r}$ denote the usual singular
value decomposition. Observe that the preconditioned Jacobian $\J\P^{-1/2}$
satisfies
\[
\J\P^{-1/2}\vect(Y)=\vect(XP^{-1/2}Y^{T}+YP^{-1/2}X^{T})=\vect\left(\sum_{i=1}^{r}\frac{u_{i}y_{i}^{T}+y_{i}u_{i}^{T}}{\sqrt{1+\eta/\sigma_{i}^{2}}}\right)
\]
where $y_{i}=Yv_{i}$. This motivates the following family of singular
value decompositions 
\begin{align}
\JU_{k}\JS_{k}\JV_{k}^{T}\vect(Y) & =\vect\left(\sum_{i=1}^{k}\frac{u_{i}y_{i}^{T}+y_{i}u_{i}^{T}}{\sqrt{1+\eta/\sigma_{i}^{2}}}\right)\text{ for all }k\in\{1,2,\dots,r\}, & \J\P^{-1/2} & =\JU_{r}\JS_{r}\JV_{r}^{T}.\label{eq:Jsvd}
\end{align}
Here, the $n^{2}\times\zeta_{k}$ matrix $\JU_{k}$ and the $nr\times\zeta_{k}$
matrix $\JV_{k}$ have orthonormal columns, and the rank can be verified
as $\zeta_{k}=nk-k(k-1)/2<nr\le n^{2}$. Now, we rewrite Lemma \ref{lem:reform}
by vectorizing $y=\vect(Y)$ and writing
\begin{align*}
\|\nabla f(X)\|_{P*} & \ge\max_{\|\P^{1/2}y\|\le1}\left(\frac{\e^{T}\J y}{\|\e\|\|\J y\|}-\delta\right)\|\e\|\|\J y\|\overset{\text{(a)}}{=}\max_{\|y'\|\le1}\left(\frac{\e^{T}\J\P^{-1/2}y}{\|\e\|\|\J\P^{-1/2}y\|}-\delta\right)\|\e\|\|\J\P^{-1/2}y\|\\
 & \overset{\text{(b)}}{=}\max_{\|y'\|\le1}\left(\frac{\e^{T}\JU_{r}\JS_{r}\JV_{r}^{T}y}{\|\e\|\|\JU_{r}\JS_{r}\JV_{r}^{T}y\|}-\delta\right)\|\e\|\|\JU_{r}\JS_{r}\JV_{r}^{T}y\|\\
 & \overset{\text{(c)}}{\ge}\left(\frac{\e^{T}\JU_{k}\JU_{k}^{T}\e}{\|\e\|\|\JU_{k}^{T}\e\|}-\delta\right)\|\e\|\frac{\|\JU_{k}^{T}\e\|}{\|\JS_{k}^{-1}\JU_{k}^{T}\e\|}\overset{\text{(d)}}{\ge}\left(\frac{\|\JU_{k}^{T}\e\|}{\|\e\|}-\delta\right)\|\e\|\lambda_{\min}(\JS_{k}).\\
\end{align*}
Step (a) makes a change of variables $y\gets\P^{1/2}y$; Step (b)
substitutes (\ref{eq:Jsvd}); Step (c) substitutes the heuristic choice
$y=d/\|d\|$ where $d=\JV_{k}\JS_{k}^{-1}\JU_{k}^{T}\e$; Step (d)
notes that $\e^{T}\JU_{k}\JU_{k}^{T}\e=\|\JU_{k}^{T}\e\|^{2}$ and
that $\|\JS_{k}^{-1}\JU_{k}^{T}\e\|\le\|\JU_{k}^{T}\e\|\cdot\lambda_{\max}(\JS_{k}^{-1})=\|\JU_{k}^{T}\e\|/\lambda_{\min}(\JS_{k})$.
Finally, we can mechanically verify from (\ref{eq:Jsvd}) that
\[
\cos^{2}\theta_{k}\eqdef\frac{\|\JU_{k}^{T}\e\|^{2}}{\|\e\|^{2}}=1-\frac{\|(I-\JU_{k}^{T}\JU_{k}^{T})\e\|^{2}}{\|\e\|^{2}}=1-\frac{\|(I-U_{k}U_{k}^{T})E(I-U_{k}U_{k}^{T})\|_{F}^{2}}{\|E\|_{F}^{2}}
\]
where $U_{k}=[u_{1},\dots,u_{k}]$, and that 
\[
\lambda_{\min}^{2}(\JS_{k})=\min_{\|y_{k}\|=1}\left\Vert \frac{u_{k}y_{k}^{T}+y_{k}u_{k}^{T}}{\sqrt{1+\eta/\sigma_{k}^{2}}}\right\Vert _{F}^{2}=\min_{\|y_{k}\|=1}\frac{2\|u_{k}\|^{2}\|y_{k}\|^{2}+2(u_{k}^{T}y_{k})^{2}}{1+\eta/\sigma_{k}^{2}}=\frac{2}{1+\eta/\sigma_{k}^{2}}.
\]
\end{proof}

\subsection{\label{subsec:alignment}Proof of Basis Alignment (Lemma \ref{lem:sintheta})}

Before we prove this lemma, we make two observations that simplifies
the proof. First, even though our goal is to prove the inequality
(\ref{eq:ratio}) for all $k\geq r^{*}$, it actually suffices to
consider the case $k=r^{*}$. This is because the numerator $\|Z^{T}(I-U_{k}U_{k}^{T})Z\|_{F}$
decreases monotonically as $k$ increases. Indeed, for any $k\ge r^{\star}$,
define $VV^{T}$ as below 
\[
I-U_{k}U_{k}^{T}=I-U_{r^{\star}}U_{r^{\star}}^{T}-VV^{T}=(I-U_{r^{\star}}U_{r^{\star}}^{T})(I-VV^{T})=(I-VV^{T})(I-U_{r^{\star}}U_{r^{\star}}^{T}).
\]
Then, we have 
\begin{align*}
\|Z^{T}(I-U_{k}U_{k}^{T})Z\|_{F} & =\|(I-U_{k}U_{k}^{T})ZZ^{T}(I-U_{k}U_{k}^{T})\|_{F}\\
 & =\|(I-VV^{T})(I-U_{r^{\star}}U_{r^{\star}}^{T})ZZ^{T}(I-U_{r^{\star}}U_{r^{\star}}^{T})(I-VV^{T})\|_{F}\\
 & \le\|(I-U_{r^{\star}}U_{r^{\star}}^{T})ZZ^{T}(I-U_{r^{\star}}U_{r^{\star}}^{T})\|_{F}.
\end{align*}
Second, due to the rotational invariance of this problem, we can assume
without loss of generality that $X,Z$ are of the form 
\begin{equation}
X=\begin{bmatrix}X_{1} & 0\\
0 & X_{2}
\end{bmatrix},\;Z=\begin{bmatrix}Z_{1}\\
Z_{2}
\end{bmatrix}.\label{invari}
\end{equation}
where $X_{1}\in\mathbb{R}^{k\times k},\;Z_{1}\in\R^{k\times r^{\star}}$
and $\sigma_{\min}(X_{1})\ge\sigma_{\max}(X_{2})$. (Concretely, we
compute the singular value decomposition $X=USV^{T}$ with $U\in\R^{n\times n}$
and $V\in\R^{r\times r}$, and then set $X\gets U^{T}XV$ and $Z\gets U^{T}Z$.)
We first need to show that as $XX^{T}$ approaches $ZZ^{T}$, the
dominant directions of $X$ must align with $Z$ in a way as to make
the $Z_{2}$ portion of $Z$ go to zero.
\begin{lem}
Suppose that $X,Z$ are in the form in (\ref{invari}), and $k\geq r^{\star}$.
If $\|XX^{T}-ZZ^{T}\|_{F}\leq\rho\lambda_{\min}(Z^{T}Z)$ and $\rho^{2}<1/2$,
then $\lambda_{\min}(Z_{1}^{T}Z_{1})\ge\lambda_{\max}(Z_{2}^{T}Z_{2})$.
\label{lem:ab} 
\end{lem}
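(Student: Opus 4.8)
The plan is to argue by contradiction. Suppose the conclusion fails, so that $a < b$ where I abbreviate $a := \lambda_{\min}(Z_1^{T}Z_1)$ and $b := \lambda_{\max}(Z_2^{T}Z_2)$ (both are eigenvalues of $r^{\star}\times r^{\star}$ matrices). The starting observation is that $XX^{T} = \mathrm{diag}(X_1X_1^{T},\,X_2X_2^{T})$ is block diagonal while $ZZ^{T}$ is not, so expanding block by block yields the exact identity
\[
\|XX^{T}-ZZ^{T}\|_F^2 = \|X_1X_1^{T}-Z_1Z_1^{T}\|_F^2 + 2\|Z_1Z_2^{T}\|_F^2 + \|X_2X_2^{T}-Z_2Z_2^{T}\|_F^2 .
\]
I will lower bound each of the three summands, show their sum is at least $\tfrac12(a+b)^2$, and then note $\lambda_{\min}(Z^{T}Z)=\lambda_{\min}(Z_1^{T}Z_1+Z_2^{T}Z_2)\le a+b$ by Weyl's inequality. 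Since $\rho^2<\tfrac12$, this produces $\|XX^{T}-ZZ^{T}\|_F^2 \ge \tfrac12(a+b)^2 \ge \tfrac12\lambda_{\min}(Z^{T}Z)^2 > \rho^2\lambda_{\min}(Z^{T}Z)^2$, contradicting the hypothesis.

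For the cross term I will use $\|Z_1Z_2^{T}\|_F^2 = \mathrm{Tr}(Z_1^{T}Z_1\,Z_2^{T}Z_2) \ge \lambda_{\min}(Z_1^{T}Z_1)\,\mathrm{Tr}(Z_2^{T}Z_2) \ge ab$, where the middle step is the standard bound $\mathrm{Tr}(AB)\ge\lambda_{\min}(A)\,\mathrm{Tr}(B)$ for positive semidefinite $A,B$ and the last uses $\mathrm{Tr}(Z_2^{T}Z_2)\ge\lambda_{\max}(Z_2^{T}Z_2)=b$. For the two diagonal blocks I will combine Weyl's perturbation bound $|\lambda_i(A)-\lambda_i(B)|\le\|A-B\|_F$ with the ordering hypothesis $\sigma_{\min}(X_1)\ge\sigma_{\max}(X_2)$. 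Writing $u^2:=\sigma_{\min}(X_1)^2$ and $v^2:=\sigma_{\max}(X_2)^2$, so that $u\ge v\ge 0$: comparing the largest eigenvalues of the two $(n-k)\times(n-k)$ blocks gives $\|X_2X_2^{T}-Z_2Z_2^{T}\|_F^2 \ge (v^2-b)^2$; and comparing the smallest eigenvalues of the two $k\times k$ blocks gives $\|X_1X_1^{T}-Z_1Z_1^{T}\|_F^2 \ge (u^2-a)^2$ when $k=r^{\star}$ (so $\lambda_{\min}(Z_1Z_1^{T})=a$) and $\ge u^4$ when $k>r^{\star}$ (so $Z_1Z_1^{T}$ has rank $\le r^{\star}<k$, hence $\lambda_{\min}(Z_1Z_1^{T})=0$).

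It then remains to minimize the resulting lower bound over $u\ge v\ge 0$. When $k=r^{\star}$ the bound is $(u^2-a)^2 + 2ab + (v^2-b)^2$; a short optimization (the unconstrained minimizer $(u^2,v^2)=(a,b)$ is infeasible since $a<b$, so the minimum lies on $u^2=v^2$) shows it is at least $\tfrac12(a+b)^2$. When $k>r^{\star}$ the bound is $u^4 + 2ab + (v^2-b)^2$, whose minimum over $u\ge v\ge 0$ is attained at $u=v$ and is again $\ge \tfrac12(a+b)^2$, using $b>a$. In both cases $\|XX^{T}-ZZ^{T}\|_F^2 \ge \tfrac12(a+b)^2 \ge \tfrac12\lambda_{\min}(Z^{T}Z)^2$, which contradicts $\|XX^{T}-ZZ^{T}\|_F^2 \le \rho^2\lambda_{\min}(Z^{T}Z)^2$ because $\rho^2<\tfrac12$; hence $a\ge b$. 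I expect the main obstacle to be precisely this balancing act: each of the three summands can individually be small, so the bounds must be put in exactly the right form for their sum to reach $\tfrac12(a+b)^2$, and this is where the ordering hypothesis $\sigma_{\min}(X_1)\ge\sigma_{\max}(X_2)$ is indispensable — without it one could take $u^2=a$, $v^2=b$, making both diagonal terms vanish and leaving only $2ab$, which is too small to force a contradiction when $a\ll b$.
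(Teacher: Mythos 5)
Your proof is correct and follows essentially the same route as the paper's: argue by contradiction assuming $a<b$, use the same block expansion of $\|XX^{T}-ZZ^{T}\|_F^2$, the same lower bound $2ab$ on the cross term, and the same reduction to a constrained two-variable quadratic minimization whose value is $\tfrac12(a+b)^2\ge\tfrac12\lambda_{\min}(Z^{T}Z)^2$, contradicting $\rho^2<\tfrac12$. The only (minor) difference is that you obtain the diagonal-block lower bounds via Weyl's inequality on the extremal eigenvalues, treating $k>r^{\star}$ separately, whereas the paper relaxes $X_iX_i^{T}$ to PSD matrices, diagonalizes, and drops all but two coordinates; both land on the same scalar optimization.
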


\begin{proof}
Denote $\gamma=\lambda_{\min}(Z_{1}^{T}Z_{1})$ and $\beta=\lambda_{\max}(Z_{2}^{T}Z_{2})$.
We will assume $\gamma<\beta$ and prove that $\rho^{2}\geq1/2$,
which contradicts our hypothesis. The claim is invariant to scaling
of $X$ and $Z$, so we assume without loss of generality that $\lambda_{\min}(Z^{T}Z)=1$.
Our radius hypothesis then reads 
\begin{align*}
\|XX^{T}-ZZ^{T}\|_{F}^{2} & =\left\Vert \begin{bmatrix}X_{1}X_{1}^{T}-Z_{1}Z_{1}^{T} & -Z_{1}Z_{2}^{T}\\
-Z_{2}Z_{1}^{T} & X_{2}X_{2}^{T}-Z_{2}Z_{2}^{T}
\end{bmatrix}\right\Vert _{F}^{2}\\
 & =\|X_{1}X_{1}^{T}-Z_{1}Z_{1}^{T}\|_{F}^{2}+2\langle Z_{1}^{T}Z_{1},Z_{2}^{T}Z_{2}\rangle+\|X_{2}X_{2}^{T}-Z_{2}Z_{2}^{T}\|_{F}^{2}\leq\rho^{2}.
\end{align*}
Now, we optimize over $X_{1}$ and $X_{2}$ to minimize the left-hand
side. Recall by construction in (\ref{invari}) we restricted $\sigma_{\min}(X_{1})\ge\sigma_{\max}(X_{2})$.
Accordingly, we consider
\begin{equation}
\min_{X_{1},X_{2}}\left\{ \|X_{1}X_{1}^{T}-Z_{1}Z_{1}^{T}\|_{F}^{2}+\|X_{2}X_{2}^{T}-Z_{2}Z_{2}^{T}\|_{F}^{2}:\lambda_{\min}(X_{1}X_{1}^{T})\ge\lambda_{\max}(X_{2}X_{2}^{T})\right\} .\label{eq:s_0}
\end{equation}
We relax $X_{1}X_{1}^{T}$ and $X_{2}X_{2}^{T}$ into positive semidefinite
matrices
\begin{align}
\text{\eqref{eq:s_0}}\geq & \min_{S_{1}\succeq0,S_{2}\succeq0}\{\|S_{1}-Z_{1}Z_{1}^{T}\|_{F}^{2}+\|S_{2}-Z_{2}Z_{2}^{T}\|_{F}^{2}:\lambda_{\min}(S_{1})\ge\lambda_{\max}(S_{2})\}\label{eq:s_01}
\end{align}
The equation above is invariant to a change of basis for both $S_{1}$
and $S_{2}$, so we change the basis of $S_{1}$ and $S_{2}$ into
the eigenbases of $Z_{1}Z_{1}^{T}$ and $Z_{2}Z_{2}^{T}$ to yield
\begin{align}
\text{\eqref{eq:s_01}}= & \min_{s_{1}\ge0,s_{2}\ge0}\{\|s_{1}-\lambda(Z_{1}Z_{1}^{T})\|^{2}+\|s_{2}-\lambda(Z_{2}Z_{2}^{T})\|^{2}:\min(s_{1})\ge\max(s_{2})\}\label{s_1}
\end{align}
where $\lambda(Z_{1}Z_{1}^{T})\ge0$ and $\lambda(Z_{2}Z_{2}^{T})\ge0$
are the vector of eigenvalues. We lower-bound (\ref{s_1}) by dropping
all the terms in the sum of squares except the one associated with
$\lambda_{\min}(Z_{1}^{T}Z_{1})$ and $\lambda_{\max}(Z_{2}Z_{2}^{T})$
to obtain 
\begin{align}
(\ref{s_1}) & \geq\min_{d_{1},d_{2}\in\R_{+}}\{[d_{1}-\lambda_{\min}(Z_{1}^{T}Z_{1})]^{2}+[d_{2}-\lambda_{\max}(Z_{2}Z_{2}^{T})]^{2}:d_{1}\ge d_{2}\}\\
 & =\min_{d_{1},d_{2}\in\R_{+}}\{[d_{1}-\gamma]^{2}+[d_{2}-\beta]^{2}:d_{1}\ge d_{2}\}=(\gamma-\beta)^{2}/2,\label{optalpha}
\end{align}
where we use the fact that $\gamma<\beta$ to argue that $d_{1}=d_{2}$
at optimality. Now we have 
\begin{align*}
\rho^{2} & \ge\|X_{1}X_{1}^{T}-Z_{1}Z_{1}^{T}\|_{F}^{2}+\|X_{2}X_{2}^{T}-Z_{2}Z_{2}^{T}\|_{F}^{2}+2\langle Z_{1}^{T}Z_{1},Z_{2}^{T}Z_{2}\rangle\\
 & \ge\|X_{1}X_{1}^{T}-Z_{1}Z_{1}^{T}\|_{F}^{2}+\|X_{2}X_{2}^{T}-Z_{2}Z_{2}^{T}\|_{F}^{2}+2\lambda_{\min}(Z_{1}^{T}Z_{1})\lambda_{\max}(Z_{2}^{T}Z_{2})\\
 & \ge\min_{d_{1},d_{2}\in\R_{+}}\{[d_{1}-\gamma]^{2}+[d_{2}-\beta]^{2}:d_{1}\ge d_{2}\}+2\gamma\beta\\
 & \geq\frac{(\gamma-\beta)^{2}}{2}+2\gamma\beta=\frac{1}{2}(\gamma+\beta)^{2}.
\end{align*}
Finally, note that 
\begin{align*}
\gamma+\beta=\lambda_{\min}(Z_{1}^{T}Z_{1})+\lambda_{\max}(Z_{2}^{T}Z_{2})\geq\lambda_{\min}(Z_{1}^{T}Z_{1}+Z_{2}^{T}Z_{2})=\lambda_{\min}(Z^{T}Z)=1.
\end{align*}
Therefore, we have $\rho^{2}\geq1/2$, a contradiction. This completes
the proof. 
\end{proof}
Now we are ready to prove Lemma \ref{lem:sintheta}. 
\begin{proof}
As before, assume with out loss of generality that $X,Z$ are of the
form (\ref{invari}). From the proof of Lemma \ref{lem:ab} we already
know 
\begin{align*}
\|XX^{T}-ZZ^{T}\|_{F}^{2}=\|X_{1}X_{1}^{T}-Z_{1}Z_{1}^{T}\|_{F}^{2}+2\langle Z_{1}^{T}Z_{1},Z_{2}^{T}Z_{2}\rangle+\|X_{2}X_{2}^{T}-Z_{2}Z_{2}^{T}\|_{F}^{2}.
\end{align*}
Moreoever, we can compute 
\begin{equation}
\|Z^{T}(I-U_{k}U_{k}^{T})Z\|_{F}=\left\Vert \begin{bmatrix}Z_{1}\\
Z_{2}
\end{bmatrix}^{T}\left(I-\begin{bmatrix}I_{k} & 0\\
0 & 0
\end{bmatrix}\right)\begin{bmatrix}Z_{1}\\
Z_{2}
\end{bmatrix}\right\Vert _{F}=\|Z_{2}^{T}Z_{2}\|_{F}=\|Z_{2}Z_{2}^{T}\|_{F}.\label{eq:numer}
\end{equation}
We will show that in the neighborhood $\|XX^{T}-ZZ^{T}\|\leq\rho\lambda_{\min}(Z^{T}Z)$
that 
\begin{equation}
\rho\le1/\sqrt{2}\implies\sin\phi\eqdef\|(I-U_{k}U_{k}^{T})Z\|_{F}/\sigma_{k}(Z)=\|Z_{2}\|_{F}/\sigma_{r^{\star}}(Z)\le\rho.\label{eq:sinbnd}
\end{equation}
Then we obtain 
\begin{align}
\frac{\|Z_{2}Z_{2}^{T}\|_{F}^{2}}{\|XX^{T}-ZZ^{T}\|^{2}} & \overset{\text{(a)}}{\le}\frac{\|Z_{2}\|_{F}^{4}}{2\langle Z_{1}^{T}Z_{1},Z_{2}^{T}Z_{2}\rangle}\overset{\text{(b)}}{\le}\frac{\|Z_{2}\|_{F}^{4}}{2\lambda_{\min}(Z_{1}^{T}Z_{1})\|Z_{2}\|_{F}^{2}}\nonumber \\
 & \overset{\text{(c)}}{\le}\frac{\|Z_{2}\|_{F}^{2}}{2[\lambda_{\min}(Z^{T}Z)-\|Z_{2}\|_{F}^{2}]}=\frac{\sin^{2}\phi}{2[1-\sin^{2}\phi]}\\
 & \leq\frac{1}{2}\frac{\rho^{2}}{1-\rho^{2}}.\label{eq:mainbnd}
\end{align}
Step (a) bounds the numerator as $\|Z_{2}Z_{2}^{T}\|_{F}\leq\|Z_{2}\|_{F}^{2}$
and uses the fact that the denominator is greater than $2\langle Z_{1}^{T}Z_{1},Z_{2}^{T}Z_{2}\rangle$.
Step (b) follows from the inequality $\langle Z_{1}^{T}Z_{1},Z_{2}^{T}Z_{2}\rangle\geq\lambda_{\min}(Z_{1}^{T}Z_{1})\|Z_{2}Z_{2}^{T}\|_{F}$.
Finally, step (c) bounds the minimum eigenvalue of $Z_{1}^{T}Z_{1}$
by noting that 
\begin{align}
\lambda_{\min}(Z_{1}^{T}Z_{1}) & =\lambda_{\min}(Z_{1}^{T}Z_{1}+Z_{2}^{T}Z_{2}-Z_{2}^{T}Z_{2})\nonumber \\
 & \ge\lambda_{\min}(Z_{1}^{T}Z_{1}+Z_{2}^{T}Z_{2})-\lambda_{\max}(Z_{2}^{T}Z_{2})\nonumber \\
 & \geq\lambda_{\min}(Z^{T}Z)-\|Z_{2}\|_{F}^{2},\label{eq:eigen_pert}
\end{align}
where the last line bounds the operator norm of $Z_{2}$ with the
Frobenius norm.

To prove (\ref{eq:sinbnd}), we know from Lemma \ref{lem:ab} that
$\rho\leq1/\sqrt{2}$ implies that $\lambda_{\min}(Z_{1}^{T}Z_{1})\ge\lambda_{\max}(Z_{2}^{T}Z_{2})$.
This implies $\lambda_{\min}(Z_{1}^{T}Z_{1})\ge\frac{1}{2}\lambda_{\min}(Z^{T}Z)$,
since
\[
2\lambda_{\min}(Z_{1}^{T}Z_{1})\ge\lambda_{\min}(Z_{1}^{T}Z_{1})+\lambda_{\max}(Z_{2}^{T}Z_{2})\ge\lambda_{\min}(Z^{T}Z)
\]
This implies the following
\begin{align*}
\|XX^{T}-ZZ^{T}\|_{F}^{2} & =\|X_{1}X_{1}^{T}-Z_{1}Z_{1}^{T}\|_{F}^{2}+2\langle Z_{1}^{T}Z_{1},Z_{2}^{T}Z_{2}\rangle+\|X_{2}X_{2}^{T}-Z_{2}Z_{2}^{T}\|_{F}^{2}\\
 & \geq2\langle Z_{1}^{T}Z_{1},Z_{2}^{T}Z_{2}\rangle\geq2\lambda_{\min}(Z_{1}^{T}Z_{1})\|Z\|_{F}^{2}\geq\lambda_{\min}(Z^{T}Z)\|Z\|_{F}^{2}
\end{align*}
and we have therefore
\[
\rho^{2}\lambda_{\min}^{2}(Z^{T}Z)\ge\|XX^{T}-ZZ^{T}\|_{F}^{2}\ge\lambda_{\min}(Z^{T}Z)\|Z\|_{F}^{2}\ge\lambda_{\min}(Z^{T}Z)\|Z_{2}\|_{F}^{2}
\]
which this proves $\sin^{2}\phi=\|Z_{2}\|_{F}^{2}/\lambda_{\min}(Z^{T}Z)\le\rho^{2}$
as desired. 
\end{proof}

\section{\label{sec:Prelim_noisy}Preliminaries for the Noisy Case}

\subsection{Notations}

In the following sections, we extend our proofs to the noisy setting.
As before, we denote by $M^{\star}=ZZ^{T}\in\R^{n\times n}$ our ground
truth. Our measurements are of the form $y=\AA(ZZ^{T})+\epsilon\in\R^{m}$.
We make the standard assumption that the noise vector $\epsilon\in\mathbb{R}^{m}$
has sub-Gaussian entries with zero mean and variance $\sigma^{2}=\frac{1}{m}\sum_{i=1}^{m}\mathbb{E}[\epsilon_{i}^{2}]$.

In this case, the objective function can be written as 
\[
f(X)=\frac{1}{m}\|\AA(XX^{T})-y\|^{2}=f_{c}(X)+\frac{1}{m}\|\epsilon\|^{2}-\frac{2}{m}\langle\AA(XX^{T}-M^{\star}),\epsilon\rangle,
\]
where $f_{c}(X)=\frac{1}{m}\|\AA(XX^{T}-M^{\star})\|^{2}$ is the
objective function with clean measurements that are not corrupted
with noise. Note that compared to the noiseless case, we have rescaled
our objective by a factor of $1/m$ to emphasize the number of measurements
$m$.

Moreover, we say that an event $\mathcal{E}$ happens with overwhelming
or high probability, if its probability of occurrence is at least
$1-cn^{-c'}$, for some $0<c,c'<\infty$. Moreover, to streamline
the presentation, we omit the statement ``with high or overwhelming
probabily'' if it is implied by the context.

We make a few simplifications on notations. As before, we will use
$\alpha$ to denote the step-size and $D$ to denote the local search
direction. We will use lower case letters $x$ and $d$ to refer to
$\vect{(X)}$ and $\vect{(D)}$ respectively.

Similarly, we will write $f(x)\in\R^{nr}$ and $\nabla f(x)\in R^{nr}$
as the vectorized versions of $f(X)$ and its gradient. This notation
is also used for $f_{c}(X)$. As before, we define $P=X^{T}X+\eta I_{r}$
and $\P=(X^{T}X+\eta I_{r})\otimes I_{n}.$ For the vectorized version
of the gradient, we simply define its $P$-norm (and $P^{*}$-norm)
to be the same as the matrix version, that is, 
\[
\|\nabla f(x)\|_{P}=\|\nabla f(X)\|_{P},\qquad\|\nabla f(x)\|_{P^{*}}=\|\nabla f(X)\|_{P^{*}}.
\]

We drop the iteration index $k$ from our subsequent analysis, and
refer to $x_{k+1}$ and $x_{k}$ as $\tilde{x}$ and $x$, respectively.
Thus, with noisy measurements, the iterations of PrecGD take the form
\[
X_{k+1}=X_{k}-\alpha\nabla f(X_{k})(X_{k}^{T}X_{k})^{-1}.
\]
The \textit{vectorized} version of the gradient update above can be
written as $\tilde{x}=x-\alpha d$, where 
\begin{equation}
\begin{aligned}d & =\vect{(\nabla f(X)P^{-1})}=\vect\left(f_{c}(X)+\frac{1}{m}\|\epsilon\|^{2}-\frac{2}{m}\langle\AA(XX^{T}-M^{\star}),\epsilon\rangle\right)\\
 & =\P^{-1}\nabla f_{c}(x)-\frac{2}{m}\P^{-1}\left(I_{r}\otimes\sum_{i=1}^{m}\epsilon_{i}A_{i}\right)x.
\end{aligned}
\label{eq:d_gd}
\end{equation}
Inspired by the variational representation of the Frobenius norm,
for any matrix $H\in\R^{n\times n}$ we define its \textit{restricted
Frobenius norm} as 
\begin{equation}
\|H\|_{F,r}=\argmax_{Y\in S_{n}^{+},\mathrm{rank}(Y)\leq r}\inner HY,\label{eq:frob}
\end{equation}
where $S_{n}^{+}$ is the set of $n\times n$ positive semidefinite
matrices. It is easy to verify that $\|H\|_{F}=\|H\|_{F,n}$ and $\|H\|_{F,r}=\sqrt{\sum_{i=1}^{r}\sigma_{i}(H)^{2}}$.

For any two real numbers $a,b\in R$, we say that $a\asymp b$ if
there exists some constant $C_{1},C_{2}$ such that $C_{1}b\leq a\leq C_{2}b$.
Through out the section we will use one symbol $C$ to denote constants
that might differ.

Finally, we also recall that $\mu_{P}$, which is used repeatedly
in this section, is the constant defined in (\ref{eq:mueq2}).

\subsection{Auxiliary Lemmas}

Now we present a few auxiliary lemmas that we will use for the proof
of the noisy case. At the core of our subsequent proofs is the following
standard concentration bound.
\begin{lem}
\label{lem:AE} Suppose that the number of measurements satisfies
$m\gtrsim\sigma n\log n$. Then, with high probability, we have 
\[
\frac{1}{m}\left\Vert \sum_{i=1}^{m}A_{i}\epsilon_{i}\right\Vert _{2}\lesssim\sqrt{\frac{\sigma^{2}n\log n}{m}},
\]
where $\|\cdot\|_{2}$ denotes the operator norm of a matrix. 
\end{lem}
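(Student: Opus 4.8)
The plan is to prove this by a standard covering argument over the unit sphere, using RIP to control the ``variance'' of the relevant linear form in each fixed direction. Write $W\defeq\frac{1}{m}\sum_{i=1}^{m}\epsilon_{i}A_{i}$. Since $\epsilon$ is independent of $\AA$, I would condition on $\AA$ and treat the $A_{i}$ as fixed matrices satisfying $(2r,\delta)$-RIP, so that all randomness lies in $\epsilon$; as is standard in symmetric matrix sensing we may take each $A_{i}=A_{i}^{T}$ (only the symmetric part contributes to $\langle A_{i},XX^{T}\rangle$ anyway), so that $W$ is symmetric and $\|W\|_{2}=\sup_{u\in S^{n-1}}|u^{T}Wu|$. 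First I would fix a $1/4$-net $\mathcal{N}$ of $S^{n-1}$ with $|\mathcal{N}|\le9^{n}$; the usual perturbation bound then gives $\|W\|_{2}\le2\max_{u\in\mathcal{N}}|u^{T}Wu|$, reducing the problem to a union bound over finitely many directions.

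Next, fix $u\in S^{n-1}$ and write $u^{T}Wu=\frac{1}{m}\sum_{i=1}^{m}\epsilon_{i}c_{i}$ with $c_{i}\defeq u^{T}A_{i}u=\langle A_{i},uu^{T}\rangle$. The key step is that RIP bounds the energy of $c=(c_{1},\dots,c_{m})$: since $uu^{T}$ has rank $1\le2r$, we get $\frac{1}{m}\sum_{i=1}^{m}c_{i}^{2}=\frac{1}{m}\|\AA(uu^{T})\|^{2}\le(1+\delta)\|uu^{T}\|_{F}^{2}=1+\delta$. Hence $u^{T}Wu$ is a weighted sum of independent mean-zero variables each with sub-Gaussian norm $O(\sigma)$, and the sub-Gaussian (Hoeffding-type) tail bound yields $\Pb\bigl[\,|u^{T}Wu|\ge t\,\bigr]\le2\exp\!\bigl(-c_{0}mt^{2}/(\sigma^{2}(1+\delta))\bigr)$ for an absolute constant $c_{0}>0$.

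Finally I would union-bound over $\mathcal{N}$ and pick the threshold $t=C\sqrt{\sigma^{2}(1+\delta)\,n\log n/m}$ with $C$ a large enough absolute constant: the failure probability is at most $2\cdot9^{n}\exp(-c_{0}C^{2}n\log n)$, which is $\le n^{-c'}$ once $c_{0}C^{2}$ exceeds $\log 9$ by a constant, and combining with the net reduction gives $\frac{1}{m}\|\sum_{i=1}^{m}A_{i}\epsilon_{i}\|_{2}=\|W\|_{2}\le2t\lesssim\sqrt{\sigma^{2}n\log n/m}$ with high probability. The hypothesis $m\gtrsim\sigma n\log n$ is inherited from the paper's standing assumptions and is not essential here beyond making the stated rate the dominant term. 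I do not expect a serious obstacle, since the argument is textbook. The one point needing care is that the net has cardinality $e^{\Theta(n)}$, so the exponential tail must be spent to absorb it --- this is precisely why the extra $\log n$ appears in the final rate (giving $\sqrt{n\log n/m}$ rather than $\sqrt{n/m}$) --- and correctly passing the RIP energy bound $\sum_{i}c_{i}^{2}\le m(1+\delta)$ into the sub-Gaussian tail with the right dependence on $\sigma$ is the only mildly technical bookkeeping.
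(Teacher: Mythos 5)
Your argument is correct, but it is a genuinely different route from the paper's. The paper proves this lemma via the matrix Bernstein inequality: it first assumes (justified for Gaussian ensembles by a separate operator-norm bound) that each measurement matrix satisfies $\|A_{i}\|_{2}\lesssim\sqrt{n}$, uses this to control both the sub-Gaussian norm of each summand $A_{i}\epsilon_{i}$ and the matrix variance $\|\sum_{i}\E[(A_{i}\epsilon_{i})^{2}]\|_{2}\lesssim\sigma^{2}mn$, and then reads off the rate (the $\log n$ there comes from the dimensional prefactor in Bernstein). You instead run a $1/4$-net over the sphere and, for each fixed direction $u$, control the coefficient energy $\frac{1}{m}\sum_{i}\langle A_{i},uu^{T}\rangle^{2}\le(1+\delta)$ purely from the standing $(2r,\delta)$-RIP assumption applied to the rank-one matrix $uu^{T}$, then apply scalar sub-Gaussian Hoeffding and a union bound. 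This buys you something real: you never need Gaussianity or an operator-norm bound on the $A_{i}$, only RIP, and the conditioning on $\AA$ is clean since $\epsilon$ is independent of the measurements. Two small remarks. First, your explanation of the $\log n$ is off: absorbing the $9^{n}$-point net only requires the exponent to be $\gtrsim n$, which $t\asymp\sqrt{\sigma^{2}n/m}$ with a large constant already achieves, so your argument actually proves the slightly stronger bound $\lesssim\sqrt{\sigma^{2}n/m}$; keeping the $\log n$ merely matches the paper's (weaker) statement. Second, your symmetrization step (so that $\|W\|_{2}=\sup_{u}|u^{T}Wu|$) is consistent with the paper's symmetric sensing setup, but if one insisted on non-symmetric $A_{i}$ a two-sided net over pairs $(u,v)$ with the same rank-one RIP bound on $uv^{T}$ would be needed; this is routine.
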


Lemma~\ref{lem:AE} will be used extensively in the proofs of Proposition
6, and Theorems 7 and 8.

Our strategy for establishing linear convergence is similar to that
of the noiseless case. Essentially, our goal is to show that with
an appropriate step-size, there is sufficient decrement in the objective
value in terms of $\|\nabla f_{c}(X)\|_{P^{*}}$. Then applying Theorem
\ref{thm:pl} will result in the desired convergence rate.

In the noiseless case, we proved a Lipschitz-like inequality (Lemma
\ref{lem:lipp}) and bounded the Lipschitz constant above in a neighborhood
around the ground truth. Similar results hold in the noisy case. However,
because of the noise, it will be easier to directly work with the
quartic polynomial $f_{c}(X-\alpha D)$ instead. In particular, we
have the following lemma that characterizes how much progress we make
by taking a step in the direction $D$.
\begin{lem}
\label{lem:descent} For any descent direction $D\in\mathbb{R}^{n\times r}$
and step-size $\alpha>0$ we have 
\begin{align}
f_{c}(X-\alpha D)&\le f_{c}(X)-\alpha\nabla f_{c}(X)^{T}D+\frac{\alpha^{2}}{2}D^{T}\nabla^{2}f_{c}(X)D\\
&+\frac{(1+\delta)\alpha^{3}}{m}\|D\|_{F}^{2}\left(2\|DX^{T}+XD^{T}\|_{F}+\alpha\|D\|_{F}^{2}\right).\label{eq:f_descent}
\end{align}
\end{lem}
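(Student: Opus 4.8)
The plan is to treat $f_c(X-\alpha D)$ as a quartic polynomial in $\alpha$, expand it exactly, match the low-order coefficients with the Taylor data of $f_c$, and control the two remaining (cubic and quartic) terms by RIP. Write $E = XX^T - M^\star$, $G = XD^T + DX^T$ and $H = DD^T$, so that $(X-\alpha D)(X-\alpha D)^T - M^\star = E - \alpha G + \alpha^2 H$ and hence
\[
f_c(X-\alpha D) = \frac1m\|\mathcal{A}(E) - \alpha\,\mathcal{A}(G) + \alpha^2\,\mathcal{A}(H)\|^2 .
\]
Expanding the square produces exactly five terms, of orders $\alpha^0,\dots,\alpha^4$.

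First I would identify the $\alpha^0$, $\alpha^1$ and $\alpha^2$ terms. The constant term is $f_c(X)$; the $\alpha^1$ term is $-\tfrac{2\alpha}{m}\langle\mathcal{A}(E),\mathcal{A}(G)\rangle$, which equals $-\alpha\langle\nabla f_c(X),D\rangle$ by the gradient identity \eqref{eq:grad_v} (with the $1/m$ normalization of $f_c$); and the $\alpha^2$ term is $\tfrac{\alpha^2}{m}\big(2\langle\mathcal{A}(E),\mathcal{A}(H)\rangle + \|\mathcal{A}(G)\|^2\big)$, which is precisely $\tfrac{\alpha^2}{2}\, D^T\nabla^2 f_c(X)D$ by the Hessian identification already carried out in the proof of Lemma~\ref{lem:lipp}. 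These three contributions reproduce the first line of the claimed inequality, in fact as an equality.

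It then remains to bound the leftover cubic and quartic terms
\[
-\frac{2\alpha^3}{m}\langle\mathcal{A}(G),\mathcal{A}(H)\rangle + \frac{\alpha^4}{m}\|\mathcal{A}(H)\|^2 .
\]
Since $G$ has rank at most $2r$ and $H$ at most $r$, the RIP-preserved Cauchy--Schwarz bound \eqref{eq:rip-cs} gives $|\langle\mathcal{A}(G),\mathcal{A}(H)\rangle| \le (1+\delta)\|G\|_F\|H\|_F$, and RIP \eqref{eq:rip} gives $\|\mathcal{A}(H)\|^2 \le (1+\delta)\|H\|_F^2$. Using the submultiplicative bound $\|H\|_F = \|DD^T\|_F \le \|D\|_F^2$, the cubic term is at most $\tfrac{2(1+\delta)\alpha^3}{m}\|XD^T+DX^T\|_F\|D\|_F^2$ and the quartic term at most $\tfrac{(1+\delta)\alpha^4}{m}\|D\|_F^4$; summing the two and factoring out $\tfrac{(1+\delta)\alpha^3}{m}\|D\|_F^2$ yields exactly the second line of the lemma.

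The argument is essentially a mechanical polynomial expansion, so there is no genuine obstacle; the only points needing care are bookkeeping the $1/m$ normalization when matching the linear and quadratic coefficients to $\nabla f_c$ and $\nabla^2 f_c$, and checking that the matrices to which RIP is applied, namely $G$ and $H$, have rank at most $2r$, so that the $(2r,\delta)$-RIP hypothesis is legitimately invoked.
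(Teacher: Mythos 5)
Your proposal is correct and follows essentially the same route as the paper's proof: an exact expansion of the quartic $f_c(X-\alpha D)$, identification of the zeroth-, first-, and second-order coefficients with $f_c$, its gradient, and its Hessian, and an RIP-plus-Cauchy--Schwarz bound on the cubic and quartic remainders using $\|DD^T\|_F\le\|D\|_F^2$. The only difference is cosmetic notation ($G$, $H$ for $XD^T+DX^T$, $DD^T$), and your explicit check that these matrices have rank at most $2r$ is a point the paper leaves implicit.
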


\begin{proof}
Directly expanding the quadratic $f_{c}(X-\alpha D)$, we get 
\begin{align*}
f_{c}(X-\alpha D) & =\frac{1}{m}\|\AA((X-\alpha D)(X-\alpha D)^{T}-M^{\star})\|^{2}\\
 & =\frac{1}{m}\|\AA(XX^{T}-M^{\star})\|^{2}-\frac{2\alpha}{m}\langle\AA(XX^{T}-M^{\star}),\AA(XD^{T}+DX^{T})\rangle\\
 & \qquad+\frac{\alpha^{2}}{m}\left[2\langle\AA(XX^{T}-M^{\star}),\AA(DD^{T})\rangle+\|\AA(XD^{T}+DX^{T})\|^{2}\right]\\
 & \qquad-\frac{2\alpha^{3}}{m}\langle\AA(XD^{T}+DX^{T}),\AA(DD^{T})\rangle+\frac{\alpha^{4}}{m}\|\AA(DD^{T})\|^{2}.
\end{align*}
We bound the third- and fourth- order terms 
\begin{align*}
|\langle\AA(XD^{T}+DX^{T}),\AA(DD^{T})\rangle| & \overset{\text{(a)}}{\le}\|\AA(XD^{T}+DX^{T})\|\|\AA(DD^{T})\rangle\|\\
 & \overset{\text{(b)}}{\le}(1+\delta)\|XD^{T}+DX^{T}\|_{F}\|DD^{T}\|_{F}\\
 & \overset{\text{(c)}}{\le}(1+\delta)\|XD^{T}+DX^{T}\|_{F}\|D\|_{F}^{2}
\end{align*}
and 
\[
\|\AA(DD^{T})\|^{2}\overset{\text{(b)}}{\le}(1+\delta)\|DD^{T}\|_{F}^{2}\overset{\text{(c)}}{\le}(1+\delta)\|D\|_{F}^{4},
\]
Step (a) uses the Cauchy--Schwarz inequality; Step (b) applies $(\delta,2r)$-RIP;
Step (c) bounds $\|DD^{T}\|_{F}\le\|D\|_{F}^{2}$. Summing up these
inequalities we get the desired result. 
\end{proof}
It turns out that in our proofs it will be easier to work with the
\textit{vectorized} version of (\ref{eq:f_descent}), which we can
write as 
\begin{equation}
f_{c}(x-\alpha d)\le f_{c}(x)-\alpha\nabla f_{c}(x)^{T}d+\frac{\alpha^{2}}{2}d^{T}\nabla^{2}f_{c}(x)d+\frac{(1+\delta)\alpha^{3}}{m}\|d\|^{2}\left(2\|\J_{X}d\|+\alpha\|d\|^{2}\right),\label{eq:noisy_descent_vec}
\end{equation}
where we recall that $J_{X}:\R^{nr}\to\R^{n^{2}}$ is the linear operator
that satisfies $J_{X}d=\vect(XD^{T}+DX^{T})$.

Now we proceed to bound the higher-order terms in the Taylor-like
expansion above. 
\begin{lem}[Second-order term]
We have 
\[
\sigma_{\max}(\P^{-1/2}\nabla^{2}f_{c}(x)\P^{-1/2})\le\frac{2(1+\delta)}{m}\left(\frac{8\sigma_{r}^{2}(X)+\|XX^{T}-ZZ^{T}\|_{F}}{\sigma_{r}^{2}(X)+\eta}\right).
\]
\label{hess} 
\end{lem}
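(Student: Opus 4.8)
The plan is to reduce the claim to a uniform bound on the Rayleigh quotient of the Hessian of the quartic $f_{c}$, and then to import, almost verbatim, the Frobenius-to-$P$-norm estimates already established in Appendix~\ref{appendix:A}. Since $\P^{-1/2}\nabla^{2}f_{c}(x)\P^{-1/2}$ is symmetric, its largest singular value equals the supremum of the absolute value of its Rayleigh quotient, and writing the constrained unit vector as $d=\P^{1/2}\vect(W)$ (so that $\|d\|=1$ is exactly $\|W\|_{P}=1$) gives
\[
\sigma_{\max}\bigl(\P^{-1/2}\nabla^{2}f_{c}(x)\P^{-1/2}\bigr)=\max_{\|W\|_{P}=1}\bigl|\inner{\nabla^{2}f_{c}(X)[W]}{W}\bigr|,
\]
so it suffices to bound the Hessian quadratic form uniformly over $\|W\|_{P}=1$.

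The first step is to write the Hessian form explicitly. With $E=XX^{T}-ZZ^{T}$, the same quartic Taylor expansion used in the proofs of Lemma~\ref{lem:lipp} and Lemma~\ref{lem:descent} (now carrying the $1/m$ normalization of $f_{c}$) shows that
\[
\inner{\nabla^{2}f_{c}(X)[W]}{W}=\frac{2}{m}\Bigl(\|\AA(XW^{T}+WX^{T})\|^{2}+2\inner{\AA(E)}{\AA(WW^{T})}\Bigr).
\]
The second step is to apply RIP. The matrices $WW^{T}$, $XW^{T}+WX^{T}$, and $E$ have ranks at most $r$, $2r$, and $r+r^{\star}\le 2r$, so $(2r,\delta)$-RIP together with the RIP Cauchy--Schwarz inequality \eqref{eq:rip-cs}, applied termwise with a triangle inequality, yields
\[
\bigl|\inner{\nabla^{2}f_{c}(X)[W]}{W}\bigr|\le\frac{2(1+\delta)}{m}\Bigl(\|XW^{T}+WX^{T}\|_{F}^{2}+2\|E\|_{F}\,\|WW^{T}\|_{F}\Bigr).
\]

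The third step converts these Frobenius norms into $P$-norms, exactly as in Appendix~\ref{appendix:A}: the triangle inequality gives $\|XW^{T}+WX^{T}\|_{F}\le 2\|XW^{T}\|_{F}$ and $\|WW^{T}\|_{F}\le\|W\|_{F}^{2}$, the inequality \eqref{eq:xdbnd} gives $\|XW^{T}\|_{F}\le\|W\|_{P}$, and the inequality \eqref{eq:dbnd} gives $\|W\|_{F}^{2}\le\|W\|_{P}^{2}/(\lambda_{\min}(X^{T}X)+\eta)=\|W\|_{P}^{2}/(\sigma_{r}^{2}(X)+\eta)$. Substituting these, putting $\|W\|_{P}=1$, and collecting the two contributions over the common denominator $\sigma_{r}^{2}(X)+\eta$ produces the claimed estimate. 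In effect, this lemma is just the second-order part of the Lipschitz-like inequality of Lemma~\ref{lem:lipp}, re-derived for $f_{c}$ and phrased as an operator-norm bound on the preconditioned Hessian; the higher-order terms that appear in Lemma~\ref{lem:descent} are handled separately and do not enter here.

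Since the conceptual content is already present in Appendix~\ref{appendix:A}, I expect the only real work to be careful bookkeeping of the numerical constants and, in particular, deciding which denominator (one of $\eta$, $\sigma_{r}^{2}(X)+\eta$, or $\sigma_{\max}^{2}(X)+\eta$) to attach to each term so as to land exactly on the advertised expression. The most convenient tool here is the exact identity $\|W\|_{P}^{2}=\|XW^{T}\|_{F}^{2}+\eta\|W\|_{F}^{2}$, which bounds $\|XW^{T}\|_{F}^{2}$ and $\eta\|W\|_{F}^{2}$ simultaneously by $\|W\|_{P}^{2}$ and is what forces both terms to share the denominator $\sigma_{r}^{2}(X)+\eta$.
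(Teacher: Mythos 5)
Your route is the same as the paper's: the paper also passes to the Rayleigh quotient via the substitution $v=\P^{-1/2}u$ (your $d=\P^{1/2}\vect(W)$ is the identical change of variables), expands $m\,v^{T}\nabla^{2}f_{c}(x)v=4\langle\AA(E),\AA(VV^{T})\rangle+2\|\AA(XV^{T}+VX^{T})\|^{2}$, applies the RIP Cauchy--Schwarz bound \eqref{eq:rip-cs}, and then converts $\|XV^{T}\|_{F}$ and $\|V\|_{F}$ into $P$-norms exactly as in \eqref{eq:xdbnd} and \eqref{eq:dbnd}. There is no missing idea.

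The one point of substance is the final constant, which you correctly flag but wrongly dismiss as bookkeeping. Your estimates yield
\[
\sigma_{\max}(\P^{-1/2}\nabla^{2}f_{c}(x)\P^{-1/2})\le\frac{2(1+\delta)}{m}\left(4+\frac{2\|XX^{T}-M^{\star}\|_{F}}{\sigma_{r}^{2}(X)+\eta}\right),
\]
which is \emph{not} the advertised $\frac{2(1+\delta)}{m}\cdot\frac{8\sigma_{r}^{2}(X)+\|XX^{T}-M^{\star}\|_{F}}{\sigma_{r}^{2}(X)+\eta}$, and no reshuffling of denominators will make it so: when $\eta\gg\sigma_{r}^{2}(X)$ the stated first term tends to zero while yours stays at $4$. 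The paper reaches the sharper first term by asserting $\sigma_{\max}(XP^{-1/2})\le\sigma_{r}(X)/\sqrt{\sigma_{r}^{2}(X)+\eta}$, but the singular values of $XP^{-1/2}$ are $\sigma_{i}(X)/\sqrt{\sigma_{i}^{2}(X)+\eta}$, an increasing function of $\sigma_{i}(X)$, so the correct bound is $\sigma_{\max}(X)/\sqrt{\sigma_{\max}^{2}(X)+\eta}\le1$ as in \eqref{eq:XPsigma} --- precisely what you use. (The stated inequality even fails at $\sigma_{r}(X)=0$, $E=0$, $X\ne0$, where its right-hand side vanishes but the quadratic form $\tfrac{2}{m}\|\AA(XW^{T}+WX^{T})\|^{2}$ does not.) Likewise your coefficient $2\|E\|_{F}$ on the cross term is the correct accounting of the factor $4$ in $4\langle\AA(E),\AA(VV^{T})\rangle$; the paper drops a factor of $2$ there. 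So what you have proved is the corrected form of Lemma~\ref{hess}; in the only regime where it is used ($\eta\gtrsim\|XX^{T}-M^{\star}\|_{F}$), both versions are bounded by an absolute constant times $(1+\delta)/m$, so the downstream absorption into $L_{\delta}$ in the proof of Theorem~\ref{thm_oracle} is unaffected. You should state and prove your version rather than chase the printed constants.
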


\begin{proof}
For any $v\in\R^{nr}$ where $v=\vect(V)$, we have 
\begin{align*}
m\cdot v^{T}\nabla^{2}f_{c}(x)v & =4\langle\AA(XX^{T}-ZZ^{T}),\AA(VV^{T})+2\|\AA(XV^{T}+VX^{T})\|^{2}\\
 & \le4\|\AA(XX^{T}-ZZ^{T})\|\|\AA(VV^{T})\|+2\|\AA(XV^{T}+VX^{T})\|^{2}\\
 & \le2(1+\delta)\left(\|XX^{T}-ZZ^{T}\|_{F}\|VV^{T}\|_{F}+2\|XV^{T}+VX^{T}\|_{F}^{2}\right)
\end{align*}
Now, let $v=\P^{-1/2}u$ for $u=\vect(U)$. Then, $V=UP^{-1/2}$ and
\begin{align*}
\|VV^{T}\|_{F} & =\|UP^{-1}U^{T}\|_{F}\le\sigma_{\max}(P^{-1})\|U\|_{F}^{2}=\frac{\|U\|_{F}^{2}}{\sigma_{r}^{2}(X)+\eta}.
\end{align*}
Also, $\|XV^{T}+VX^{T}\|_{F}\le2\|XV^{T}\|_{F}$ and 
\[
\|XV^{T}\|=\|XP^{-1/2}U^{T}\|\le\sigma_{\max}(XP^{-1/2})\|U\|_{F}=\left(\frac{\sigma_{r}^{2}(X)}{\sigma_{r}^{2}(X)+\eta}\right)^{1/2}\|U\|_{F}.
\]
Since $\|u\|=\|U\|_{F}$, it follows that 
\[
u^{T}\P^{-1/2}\nabla^{2}f_{c}(x)\P^{-1/2}u\leq\frac{2(1+\delta)}{m}\left(\frac{8\sigma_{r}^{2}(X)+\|XX^{T}-ZZ^{T}\|}{\sigma_{r}^{2}(X)+\eta}\right)\|u\|^{2},
\]
which gives the desired bound on the largest singular value. 
\end{proof}
The following lemma gives a bound on the third- and fourth-order terms
in (\ref{eq:noisy_descent_vec}).
\begin{lem}
Set $d=\P^{-1}\nabla f_{c}(x)$, then we have $\|\J d\|^{2}\leq8m^{2}\|\nabla f_{c}(x)\|_{P^{*}}^{2}$
and $\|d\|^{2}\leq\|\nabla f_{c}(x)\|_{P^{*}}^{2}/\eta$. \label{lem:high_order} 
\end{lem}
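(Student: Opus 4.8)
The plan is to reduce both estimates to two elementary facts already recorded in the preliminaries: the identity $\|Y\|_{P^{*}}=\|YP^{-1/2}\|_{F}$ from the definition (\ref{eq:dPnrm}), carried over verbatim to the vectorized notation so that $\|\nabla f_{c}(x)\|_{P^{*}}=\|\nabla f_{c}(X)P^{-1/2}\|_{F}$, together with the operator-norm bound $\sigma_{\max}(XP^{-1/2})\le 1$ from (\ref{eq:XPsigma}). Writing $D=\nabla f_{c}(X)P^{-1}$ so that $d=\vect(D)$, both bounds will then follow by inserting the resolution $P^{-1}=P^{-1/2}P^{-1/2}$ and applying $\|AB\|_{F}\le\sigma_{\max}(A)\|B\|_{F}$.

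For the bound on $\|d\|^{2}$: since $\|d\|^{2}=\|D\|_{F}^{2}=\|\nabla f_{c}(X)P^{-1/2}\cdot P^{-1/2}\|_{F}^{2}$, I would factor out $\sigma_{\max}(P^{-1/2})^{2}=1/\lambda_{\min}(P)=1/(\sigma_{r}^{2}(X)+\eta)\le 1/\eta$ and recognize the remaining factor $\|\nabla f_{c}(X)P^{-1/2}\|_{F}^{2}=\|\nabla f_{c}(x)\|_{P^{*}}^{2}$, which yields $\|d\|^{2}\le\|\nabla f_{c}(x)\|_{P^{*}}^{2}/\eta$ immediately.

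For the bound on $\|\J d\|^{2}$: by the definition of $\J$ one has $\J d=\vect(XD^{T}+DX^{T})$, hence $\|\J d\|=\|XD^{T}+DX^{T}\|_{F}\le 2\|XD^{T}\|_{F}$ by the triangle inequality and $\|DX^{T}\|_{F}=\|XD^{T}\|_{F}$. Now $XD^{T}=(XP^{-1/2})(P^{-1/2}\nabla f_{c}(X)^{T})$, so $\|XD^{T}\|_{F}\le\sigma_{\max}(XP^{-1/2})\,\|\nabla f_{c}(X)P^{-1/2}\|_{F}\le\|\nabla f_{c}(x)\|_{P^{*}}$ by (\ref{eq:XPsigma}) and (\ref{eq:dPnrm}). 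Therefore $\|\J d\|^{2}\le 4\|\nabla f_{c}(x)\|_{P^{*}}^{2}$, and since the number of measurements satisfies $m\ge 1$ this in particular implies the stated (looser) bound $\|\J d\|^{2}\le 8m^{2}\|\nabla f_{c}(x)\|_{P^{*}}^{2}$.

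There is no genuinely hard step here — as with Lemmas~\ref{lem:lipp} and~\ref{lem:bndgrad}, this is straightforward linear algebra. The only care required is the bookkeeping between the matrix and vectorized notations and not introducing stray constants; the factor $m^{2}$ in the statement is a harmless over-estimate that simply keeps the subsequent algebra in the noisy descent inequality (\ref{eq:noisy_descent_vec}) uniform, and I would note that the sharper constant $4$ is available should a tighter bound be needed downstream.
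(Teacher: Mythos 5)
Your proof is correct and follows essentially the same route as the paper's: both bounds reduce to $\sigma_{\max}(XP^{-1/2})\le 1$ and $\lambda_{\min}(\P)=\sigma_{r}^{2}(X)+\eta\ge\eta$, together with the identification $\|\nabla f_{c}(x)\|_{P^{*}}=\|\nabla f_{c}(X)P^{-1/2}\|_{F}$. Your constant $4$ for the first bound is in fact sharper than the stated $8m^{2}$ (the paper's own derivation detours through RIP and picks up a factor $(1+\delta)\le 2$ plus an $m^{2}$ that is immaterial since $m\ge 1$), and your remark that the looser stated bound follows a fortiori is valid.
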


\begin{proof}
We have 
\begin{align*}
\|\J_{X}d\|^{2} & =\|\AA(XD^{T}+DX^{T})\|^{2}\le(1+\delta)\|XD^{T}+DX^{T}\|^{2}\\
 & =(1+\delta)\|\J_{X}d\|^{2}=m^{2}(1+\delta)\|\J\P^{-1}\nabla f_{c}(x)\|^{2}\\
 & \le m^{2}(1+\delta)\sigma_{\max}^{2}(\J\P^{-1/2})\|\P^{-1/2}\nabla f_{c}(x)\|^{2}\\
 & =4m^{2}(1+\delta)\frac{\sigma_{r}^{2}}{\sigma_{r}^{2}+\eta}\|\nabla f_{c}(x)\|_{P^{*}}^{2}\le8m^{2}\|\nabla f_{c}(x)\|_{P^{*}}^{2}
\end{align*}
and 
\begin{align*}
\|d\|^{2}=\|\P^{-1}\nabla f_{c}(x)\|^{2} & \le\sigma_{\max}(\P^{-1})\|\P^{-1/2}\nabla f_{c}(x)\|^{2}\\
 & =\frac{1}{\sigma_{r}^{2}+\eta}\|\nabla f(x)\|_{P^{*}}^{2}\le\|\nabla f(x)\|_{P^{*}}^{2}/\eta.
\end{align*}
\end{proof}

\section{Proof of Noisy Case with Optimal Damping Parameter }

Now we are ready to prove Theorem \ref{thm_oracle}, which we restate
below for convenience. 
\begin{thm}[Noisy measurements with optimal $\eta$]
Suppose that the noise vector $\epsilon\in\mathbb{R}^{m}$ has sub-Gaussian
entries with zero mean and variance $\sigma^{2}=\frac{1}{m}\sum_{i=1}^{m}\mathbb{E}[\epsilon_{i}^{2}]$.
Moreover, suppose that $\eta_{k}=\frac{1}{\sqrt{m}}\|\mathcal{A}(X_{k}X_{k}^{T}-M^{*})\|$,
for $k=0,1,\dots,K$, and that the initial point $X_{0}$ satisfies
$\|\mathcal{A}(X_{0}X_{0}^{T}-M^{*})\|^{2}<\rho^{2}(1-\delta)\lambda_{r^{*}}(M^{\star})^{2}$.
Consider $k^{*}=\arg\min_{k}\eta_{k}$, and suppose that 
$\alpha\leq1/L,$ where $L>0$ is a constant that only depends on
$\delta$. Then, with high probability, we have 
\begin{align}
\|X_{k^{*}}X_{k^{*}}^{T}-M^{\star}\|_{F}^{2}\lesssim\max\left\{ \frac{1+\delta}{1-\delta}\left(1-\alpha\frac{\mu_{P}}{2}\right)^{K}\|X_{0}X_{0}^{T}-M^{*}\|_{F}^{2},\mathcal{E}_{stat}\right\} ,
\end{align}
where $\mathcal{E}_{stat}:=\frac{\sigma^{2}nr\log n}{\mu_{P}(1-\delta)m}$. 
\end{thm}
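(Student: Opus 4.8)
The plan is to track the decrement of the \emph{clean} objective $f_c(X)=\frac1m\|\AA(XX^T-M^\star)\|^2$ along the noisy iterates, isolate the contribution of the noise, and then run a geometric-decay argument driven by the noiseless gradient dominance of Theorem~\ref{thm:pl}.

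\medskip\noindent\textbf{Step 1: reduction and noise budget.} Fix an iteration, write $E=XX^T-M^\star$, $x=\vect(X)$, $\eta=\eta_k$. Since $\eta=\frac1{\sqrt m}\|\AA(E)\|=\sqrt{f_c(X)}$, RIP gives $\sqrt{1-\delta}\,\|E\|_F\le\eta\le\sqrt{1+\delta}\,\|E\|_F$, so $\eta$ satisfies the damping condition \eqref{eq:condition} with $C_{\lb}=\sqrt{1-\delta}$, $C_{\ub}=\sqrt{1+\delta}$. Consequently all of the noiseless machinery applies to $f_c$ with constants depending only on $\delta$: Lemma~\ref{lem:bndgrad} gives $\|\nabla f_c(X)\|_{P^*}\le 4\sqrt{1+\delta}\,\eta$, and Theorem~\ref{thm:pl} gives $\|\nabla f_c(X)\|_{P^*}^2\ge 2\mu_P f_c(X)$ whenever $f_c(X)\le\rho^2(1-\delta)\lambda_{r^\star}^2(M^\star)$. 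Using \eqref{eq:d_gd}, split the search direction as $d=\P^{-1}\nabla f(x)=d_c+d_\epsilon$, where $d_c=\P^{-1}\nabla f_c(x)$ and $d_\epsilon=-\frac2m\P^{-1}\bigl(I_r\otimes\sum_i\epsilon_iA_i\bigr)x$. With $\sigma_{\max}(XP^{-1/2})\le1$ and $\|XP^{-1/2}\|_F\le\sqrt r$, and invoking Lemma~\ref{lem:AE},
\[
\tau:=\|d_\epsilon\|_P=\tfrac2m\Bigl\|\textstyle\sum_i\epsilon_iA_i\,XP^{-1/2}\Bigr\|_F\le\tfrac{2\sqrt r}{m}\Bigl\|\textstyle\sum_i\epsilon_iA_i\Bigr\|_2\lesssim\sqrt{\tfrac{\sigma^2nr\log n}{m}},
\]
so that $\tau^2\lesssim \mu_P(1-\delta)\,\mathcal{E}_{stat}$; the auxiliary bounds $\|d_\epsilon\|\le\tau/\sqrt\eta$ and $\|\J_X d_\epsilon\|\le 2\tau$ follow from $\P^{-1}\preceq\eta^{-1}I$ and $\sigma_{\max}(XP^{-1/2})\le1$.

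\medskip\noindent\textbf{Step 2: per-step descent in $f_c$.} Apply the clean descent inequality \eqref{eq:noisy_descent_vec} at $x$ with step $\alpha d$. The linear term splits as $\nabla f_c(x)^Td=\|\nabla f_c(x)\|_{P^*}^2+\nabla f_c(x)^Td_\epsilon$, and Cauchy--Schwarz in the $P$-metric gives $|\nabla f_c(x)^Td_\epsilon|\le\|\nabla f_c(x)\|_{P^*}\tau$, which Young's inequality absorbs into $\tfrac14\|\nabla f_c(x)\|_{P^*}^2+\tau^2$. The quadratic term is bounded by $\sigma_{\max}(\P^{-1/2}\nabla^2 f_c(x)\P^{-1/2})\,\|d\|_P^2$, where Lemma~\ref{hess} together with $\|E\|_F\le\eta/\sqrt{1-\delta}$ makes the coefficient a constant depending only on $\delta$, and $\|d\|_P\le\|\nabla f_c(x)\|_{P^*}+\tau$; the cubic and quartic terms are bounded the same way using the estimates of Lemma~\ref{lem:high_order} for $d_c$ together with the $d_\epsilon$ bounds of Step~1, and they are subdominant because $\|\nabla f_c(x)\|_{P^*}\lesssim_\delta\eta$ keeps the relevant ratios $O_\delta(1)$. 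Choosing $\alpha\le1/L$ for a constant $L=L(\delta)$ large enough to absorb every higher-order remainder into $\tfrac14\alpha\|\nabla f_c(x)\|_{P^*}^2$ yields
\[
f_c(X_{k+1})\;\le\; f_c(X_k)-\tfrac12\,\alpha\,\|\nabla f_c(X_k)\|_{P^*}^2+c\,\alpha\,\tau^2
\]
for an absolute constant $c>0$.

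\medskip\noindent\textbf{Step 3: combine.} We may assume $\mathcal{E}_{stat}\le\rho^2(1-\delta)\lambda_{r^\star}^2(M^\star)$ (otherwise the claimed bound is trivial). Then a short induction using the estimate of Step~2 shows $f_c(X_k)\le\max\{f_c(X_0),\,2c\tau^2/\mu_P\}\le\rho^2(1-\delta)\lambda_{r^\star}^2(M^\star)$ for all $k\le K$, since $f_c$ strictly decreases whenever $f_c(X_k)>c\tau^2/\mu_P$ and otherwise cannot exceed $2c\tau^2/\mu_P$. Hence Theorem~\ref{thm:pl} applies at every iterate, and plugging $\|\nabla f_c(X_k)\|_{P^*}^2\ge 2\mu_P f_c(X_k)$ into Step~2 gives $f_c(X_{k+1})\le(1-\alpha\mu_P/2)f_c(X_k)+c\alpha\tau^2$; unrolling over $k=0,\dots,K-1$,
\[
f_c(X_{k^\star})\le f_c(X_K)\le\Bigl(1-\tfrac{\alpha\mu_P}{2}\Bigr)^{K}f_c(X_0)+\tfrac{2c}{\mu_P}\tau^2.
\]
Since $k^\star=\arg\min_k\eta_k=\arg\min_k f_c(X_k)$, RIP converts this to $\|X_{k^\star}X_{k^\star}^T-M^\star\|_F^2\le f_c(X_{k^\star})/(1-\delta)$, and using $f_c(X_0)\le(1+\delta)\|X_0X_0^T-M^\star\|_F^2$ and $\tau^2/(\mu_P(1-\delta))\lesssim\mathcal{E}_{stat}$ gives exactly the stated bound.

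\medskip\noindent\textbf{Main obstacle.} The delicate part is Step~2: verifying that one fixed step-size $\alpha\le1/L(\delta)$, independent of over-parameterization and of the current iterate, dominates all second-, third-, and fourth-order Taylor remainders \emph{together with} the noise cross-terms. This is exactly where the oracle choice $\eta_k=\sqrt{f_c(X_k)}\asymp\|E_k\|_F$ is essential: it keeps the ratios $\|E\|_F/(\sigma_r^2(X)+\eta)$ and $\|\nabla f_c(X)\|_{P^*}/\sqrt{\sigma_r^2(X)+\eta}$ appearing in Lemmas~\ref{hess}--\ref{lem:high_order} bounded by $\delta$-dependent constants even as $X$ becomes singular, mirroring \eqref{eq:lipbnd} in the noiseless analysis. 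A secondary point is the inductive maintenance of the neighborhood so that gradient dominance keeps applying; the concentration input (Lemma~\ref{lem:AE}) and the final comparison are comparatively routine.
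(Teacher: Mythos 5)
Your overall strategy is the same as the paper's: track the clean objective $f_c$, split the search direction into $\P^{-1}\nabla f_c(x)$ plus a noise part, bound the noise part in the dual $P$-norm by $\tau\lesssim\sqrt{\sigma^2 nr\log n/m}$ via Lemma~\ref{lem:AE}, and drive the recursion with the gradient dominance of Theorem~\ref{thm:pl}. Steps 1 and the $T_1,T_2$ bounds in Step 2 match the paper's computation essentially term for term.

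There is, however, a genuine gap in the uniform descent inequality of Step 2, and the Step 3 induction inherits it. The third- and fourth-order Taylor remainders are controlled through Lemma~\ref{lem:high_order}, whose only handle on $\|d\|$ in the over-parameterized regime is $\sigma_{\max}(\P^{-1})\le 1/\eta$; the resulting bounds carry factors $1/\eta$ and $1/\eta^2$. For the clean part these are harmless because $\|\nabla f_c(x)\|_{P^*}\lesssim_\delta\eta$, but the noise part $\tau$ does \emph{not} scale with $\eta$: it is a fixed quantity of order $\sqrt{\sigma^2nr\log n/m}$. The cubic remainder therefore contains terms like $\alpha^3\tau^3/\eta$ and $\alpha^4\tau^4/\eta^2$, which cannot be absorbed into $c\alpha\tau^2$ by any $\alpha\le 1/L(\delta)$ once $\eta_k=\sqrt{f_c(X_k)}$ drops below a constant multiple of $\tau$ (equivalently, the Euclidean step $\|\alpha d_\epsilon\|\le\alpha\tau/\sqrt{\eta}$ becomes unboundedly large). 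Consequently your claimed inequality $f_c(X_{k+1})\le f_c(X_k)-\tfrac{\alpha}{2}\Delta^2+c\alpha\tau^2$ fails exactly in the regime $f_c(X_k)\lesssim\tau^2$, which is the branch of your induction (``otherwise cannot exceed $2c\tau^2/\mu_P$'') that needs it. The paper closes this hole with a dichotomy that you should adopt: either $\min_k\eta_k$ falls below the statistical threshold $\asymp\sqrt{\sigma^2nr\log n/(\mu_P m)}$, in which case the conclusion already holds at $k^{*}=\arg\min_k\eta_k$ by definition and RIP, with no descent argument needed at or after that iterate; or else $\eta_k\gtrsim\tau/\sqrt{\mu_P}$ for \emph{every} $k$, so that $\Delta_k\ge 2\tau$ and $\tau/\eta_k=O(1)$ at every step, all noise terms are dominated by the corresponding $\Delta_k$ terms, and one gets pure geometric decay $f_c(X_{k+1})\le(1-\alpha\mu_P/4)f_c(X_k)$ with a fixed $\alpha\le 1/L(\delta)$. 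With that case split inserted, the rest of your argument (and your correct observation that $k^{*}$ minimizes $f_c$ over the iterates) goes through.
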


\begin{proof}
\textbf{Step I. Using Lemma \ref{lem:descent} to establish sufficient
decrement.}

First, we write out the vectorized version of Lemma \ref{eq:noisy_descent_vec}:
\begin{equation}
f_{c}(x-\alpha d)\le f_{c}(x)-\alpha\nabla f_{c}(x)^{T}d+\frac{\alpha^{2}}{2}d^{T}\nabla^{2}f_{c}(x)d+\frac{(1+\delta)\alpha^{3}}{m}\|d\|^{2}\left(2\|\J_{X}d\|+\alpha\|d\|^{2}\right).\label{eq:noisy_descent2}
\end{equation}

To simplify notation, we define the error term $\E(x)=\frac{2}{m}\left(I_{r}\otimes\sum_{i=1}^{m}\epsilon_{i}A_{i}\right)x$,
so that the search direction (\ref{eq:d_gd}) can be rewritten as
$d=\P^{-1}(\nabla f_{c}(x)-\E(x))$.

Now plugging this $d$ into (\ref{eq:noisy_descent2}) yields 
\begin{align*}
f_{c}(x-\alpha d)\leq & f_{c}(x)-\alpha\|\nabla f_{c}(x)\|_{P*}^{2}+T_{1}+T_{2}+T_{3}
\end{align*}
where 
\begin{align*}
T_{1}= & \alpha\nabla f_{c}(x)^{T}\P^{-1}\E(x)\\
T_{2}= & \frac{\alpha^{2}}{2}\Big(\nabla f_{c}(x)^{T}\P^{-1}\nabla^{2}f_{c}(x)\P^{-1}\nabla f_{c}(x)+\E(x)^{T}\P^{-1}\nabla^{2}f_{c}(x)\P^{-1}\E(x)\\
 & -2\nabla f_{c}(x)^{T}\P^{-1}\nabla^{2}f_{c}(x)\P^{-1}\E(x)\Big)\\
T_{3}= & (1+\delta)\alpha^{3}\left(\|\P^{-1}\nabla f_{c}(x)-\P^{-1}\E(x)\|^{2}\right)\left(2\|\J\P^{-1}\nabla f_{c}(x)\|+2\|\J\P^{-1}\E(x)\|\right.\\
 & +\left.\alpha\|\P^{-1}\nabla f_{c}(x)-\P^{-1}\E(x)\|^{2}\right).
\end{align*}

\textbf{II. Bounding $T_{1},T_{2}$ and $T_{3}$}.

We control each term in the above expression individually. First,
we have 
\begin{align*}
T_{1} & =\alpha\nabla f_{c}(x)^{T}\P^{-1}\E(x)\leq\alpha\|\P^{-1}\nabla f_{c}(x)\|_{P}\|\E(x)\|_{P^{*}}=\alpha\|\nabla f_{c}(x)\|_{P^{*}}\|\E(x)\|_{P^{*}}.
\end{align*}

To bound $T_{2}$, first we note that for any vectors $x,y\in\mathbb{R}^{n}$
and any positive semidefinite matrix $P\in S_{+}^{n}$, we always
have $(x+y)^{T}P(x+y)\leq2(x^{T}Px+y^{T}Py)$. Therefore we can bound
\[
T_{2}\leq\alpha^{2}\left(\nabla f_{c}(x)^{T}\P^{-1}\nabla^{2}f_{c}(x)\P^{-1}\nabla f_{c}(x)+\E(x)^{T}\P^{-1}\nabla^{2}f_{c}(x)\P^{-1}\E(x)\right).
\]
Next, we apply Lemma \ref{hess} to arrive at 
\begin{align*}
\frac{1}{2}\sigma_{\max}(\P^{-1/2}\nabla^{2}f_{c}(x)\P^{-1/2}) & {\le}\frac{1+\delta}{m}\left(\frac{8\sigma_{r}^{2}(X)+\|XX^{T}-M^{\star}\|}{\sigma_{r}^{2}(X)+\eta}\right)\overset{def}{\le}L_{\delta},
\end{align*}
where $L_{\delta}$ is a constant that only depends on $\delta$ and
$m$. Note that the last inequality follows from the fact that $\eta=O(\|XX^{T}-M^{\star}\|)$.

Now based on the above inequality, we have 
\begin{align*}
 & \alpha^{2}\left(\nabla f_{c}(x)^{T}\P^{-1}\nabla^{2}f_{c}(x)\P^{-1}\nabla f_{c}(x)\right)\leq2\alpha^{2}L_{\delta}\|\nabla f_{c}(x)\|_{P^{*}}^{2}\\
 & \alpha^{2}\left(\E(x)^{T}\P^{-1}\nabla^{2}f_{c}(x)\P^{-1}\E(x)\right)\leq2\alpha^{2}L_{\delta}\|\E(x)\|_{P^{*}}^{2},
\end{align*}
which implies 
\[
T_{2}\leq2\alpha^{2}L_{\delta}\|\nabla f_{c}(x)\|_{P^{*}}^{2}+2\alpha^{2}L_{\delta}\|\E(x)\|_{P^{*}}^{2}
\]

Finally, to bound $T_{3}$, we first write 
\[
\|\P^{-1}\nabla f_{c}(x)-\P^{-1}\E(x)\|^{2}\leq2\|\P^{-1}\nabla f_{c}(x)\|^{2}+2\|\P^{-1}\E(x)\|^{2}.
\]
Moreover, invoking Lemma \ref{lem:high_order} leads to the following
inequalities 
\begin{align*}
 & \|\P^{-1}\nabla f_{c}(x)\|^{2}\leq\frac{\|\nabla f_{c}(x)\|_{P^{*}}^{2}}{\eta},\qquad &  & \|\P^{-1}\E(x)\|^{2}\leq\frac{\|\E(x)\|_{P^{*}}^{2}}{\eta}.\\
 & \|\J\P^{-1/2}\nabla f_{c}(x)\|\leq2\sqrt{2}\|\nabla f_{c}(x)\|_{P^{*}},\qquad &  & \|\J\P^{-1/2}\E(x)\|\leq2\sqrt{2}\|\E(x)\|_{P^{*}}.
\end{align*}
Combining the above inequalities with the definition of $T_{3}$ leads
to: 
\begin{align*}
T_{3}\leq & \frac{4(1+\delta)\alpha^{3}}{\eta}\left(\|\nabla f_{c}(x)\|_{P^{*}}^{2}+\|\E(x)\|_{P^{*}}^{2}\right)\\
 & \times\left(2\sqrt{2}\|\nabla f_{c}(x)\|_{P^{*}}+2\sqrt{2}\|\nabla\E(x)\|_{P^{*}}+\frac{\alpha}{\eta}\|\nabla f_{c}(x)\|_{P^{*}}^{2}+\frac{\alpha}{\eta}\|\E(x)\|_{P^{*}}^{2}\right).
\end{align*}

\textbf{III. Bounding the Error Term}

Next, we provide an upper bound on $\|\E(x)\|_{P^{*}}$. The following
chain of inequalities hold with high probability: 
\begin{align*}
\|\E(x)\|_{P^{*}}^{2} & =\E(x)^{T}\P^{-1}\E(x)=\left\Vert \left(\frac{2}{m}\sum_{i=1}^{m}\epsilon_{i}A_{i}\right)X(X^{T}X+\eta I)^{-1/2}\right\Vert _{F}^{2}\\
 & \leq\left\Vert \left(\frac{2}{m}\sum_{i=1}^{m}\epsilon_{i}A_{i}\right)\right\Vert _{2}^{2}\left\Vert X(X^{T}X+\eta I)^{-1/2}\right\Vert _{F}^{2}\\
 & \overset{(a)}{}\leq C\frac{\sigma^{2}n\log n}{m}\left(\sum_{i=1}^{r}\frac{\sigma_{i}^{2}(X)}{\sigma_{i}(X)^{2}+\eta}\right)\\
 & \leq C\frac{\sigma^{2}rn\log n}{m},
\end{align*}
where $C$ is an absolute constant and (a) follows from Lemma \ref{lem:AE}.

\textbf{IV. Bounding all the terms using $\|\nabla f_{c}(x)\|_{P*}$}

Combining the upper bound on $\|\E(X)\|_{P^{*}}$ with the previous
bounds for $T_{1},T_{2},T_{3}$ and denoting $\Delta=\|\nabla f_{c}(x)\|_{P*}$,
we have 
\begin{align*}
 & T_{1}\leq\alpha\Delta\sqrt{\frac{C\sigma^{2}rn\log n}{m},}\\
 & T_{2}\leq2\alpha^{2}L_{\delta}\Delta^{2}+2\alpha^{2}L_{\delta}\frac{\sigma^{2}rn\log n}{m}\\
 & T_{3}\leq\frac{4(1+\delta)\alpha^{3}}{\eta}\left(\Delta^{2}+\frac{C\sigma^{2}rn\log n}{m}\right)\left(\frac{\alpha\Delta^{2}}{\eta}+\frac{\alpha C\sigma^{2}rn\log n}{\eta m}+2\sqrt{2}\Delta+2\sqrt{2}\sqrt{\frac{C\sigma^{2}rn\log n}{m}}\right)
\end{align*}
Now, combining the upper bounds for $T_{1},T_{2}$ and $T_{3}$ with~\eqref{eq:noisy_descent2}
yields 
\begin{align}
f_{c}(x-\alpha d) & \leq f_{c}(x)-\alpha\Delta^{2}+\alpha\Delta\sqrt{\frac{C\sigma^{2}rn\log n}{m}}+2\alpha^{2}L_{\delta}\Delta^{2}+2C\alpha^{2}L_{\delta}\frac{\sigma^{2}rn\log n}{m}\nonumber \\
 & +\frac{4(1+\delta)\alpha^{3}}{\eta}\left(\Delta^{2}+\frac{C\sigma^{2}rn\log n}{m}\right)\left(\frac{\alpha\Delta^{2}}{\eta}+\frac{\alpha C\sigma^{2}rn\log n}{\eta m}+2\sqrt{2}\Delta+2\sqrt{2}\sqrt{\frac{C\sigma^{2}rn\log n}{m}}\right).\label{eq:eq_f2}
\end{align}
The above inequality holds with high probability for every iteration
of PrecGD.

\textbf{V. Two cases}

Now, we consider two cases. First, suppose that $\eta\leq2\sqrt{\frac{C\sigma^{2}nr\log n}{\mu_{P}m}}$.
This implies that $\min_{k}\eta_{k}\leq2\sqrt{\frac{C\sigma^{2}nr\log n}{\mu_{P}m}}$,
and hence, 
\begin{align*}
\|X_{k^{*}}X_{k^{*}}^{T}-M^{\star}\|_{F}^{2}\lesssim\frac{1}{1-\delta}\frac{1}{m}\|\mathcal{A}(X_{k^{*}}X_{k^{*}}^{T}-M^{\star})\|^{2}\lesssim\mathcal{E}_{stat}
\end{align*}
which completes the proof.

Otherwise, suppose that $\eta>2\sqrt{\frac{C\sigma^{2}nr\log n}{\mu_{P}m}}$.
Due to Theorem \ref{thm:pl}, we have $\Delta\geq2\sqrt{\frac{C\sigma^{2}rn\log n}{m}}$,
which leads to the following inequalities: 
\begin{align*}
-\alpha\Delta^{2}+\alpha\Delta\sqrt{\frac{C\sigma^{2}rn\log n}{m}}\leq-\frac{\alpha}{2}\Delta^{2},\qquad2\alpha^{2}L_{\delta}\Delta^{2}+2C\alpha^{2}L_{\delta}\frac{\sigma^{2}rn\log n}{m}\leq\frac{5}{2}\alpha^{2}L_{\delta}\Delta^{2}.
\end{align*}
Similarly, we have 
\begin{align*}
\Delta^{2}+\frac{C\sigma^{2}rn\log n}{m}\leq\frac{5}{4}\Delta^{2},\quad2\sqrt{2}\Delta+2\sqrt{2}\sqrt{\frac{C\sigma^{2}rn\log n}{m}}\leq3\sqrt{2}\Delta,
\end{align*}
and 
\[
\frac{\alpha\Delta^{2}}{\eta}+\frac{\alpha}{\eta}\frac{C\sigma^{2}rn\log n}{m}\leq\frac{5}{4}\frac{\alpha\Delta^{2}}{\eta}.
\]
Combined with~\eqref{eq:eq_f2}, we have 
\begin{align*}
f_{c}(x-\alpha d) & \leq f_{c}(x)-\frac{\alpha}{2}\Delta^{2}+\frac{5}{2}\alpha^{2}L_{\delta}\Delta^{2}+\frac{4(1+\delta)\alpha^{3}}{\eta}\left(\frac{5}{4}\Delta^{2}\right)\left(3\sqrt{2}\Delta+\frac{5}{4}\frac{\alpha\Delta^{2}}{\eta}\right)\\
 & \leq f_{c}(x)-\frac{\alpha}{2}\Delta^{2}\left(1-\frac{5}{2}L_{\delta}\alpha-60\sqrt{2}\frac{\alpha^{2}\Delta}{\eta}-25\alpha^{3}\left(\frac{\Delta}{\eta}\right)^{2}\right).
\end{align*}
Similar to the noiseless case, we can bound the ratio $\frac{\Delta}{\eta}$
as 
\begin{align*}
\frac{\Delta}{\eta}=\frac{\|\nabla f_{c}(x)\|_{P*}}{\eta} & \le\frac{(1+\delta)\sigma_{\max}(\J\P^{-1/2})\|\e\|}{\|\e\|}=(1+\delta)\frac{\sigma_{\max}^{2}(X)}{\sigma_{\max}^{2}(X)+\eta}\le1+\delta,
\end{align*}
which in turn leads to 
\begin{align*}
f_{c}(x-\alpha d)\leq f_{c}(x)-\frac{\alpha}{2}\Delta^{2}\left(1-\frac{5}{2}L_{\delta}\alpha-60\sqrt{2}\alpha^{2}(1+\delta)-25\alpha^{3}(1+\delta)^{2}\right).
\end{align*}
Now, assuming that the step-size satisfies $\alpha\leq\min\left\{ \frac{L_{\delta}}{60\sqrt{2}(1+\delta)+25(1+\delta)^{2}},\frac{1}{7L_{\delta}}\right\} .$
Since $L_{\delta}$ is a constant, we can simply write the condition
above as $\alpha\leq1/L$ where $L=\max\left\{ \frac{60\sqrt{2}(1+\delta)+25(1+\delta)^{2}}{L_{\delta}},{7L_{\delta}}\right\} .$
Now note that 
\begin{align*}
 & \frac{5}{2}L_{\delta}+60\sqrt{2}(1+\delta)\alpha+25(1+\delta)^{2}\alpha^{2}\leq\frac{7}{2}L_{\delta}\\
\implies & 1-\frac{5}{2}L_{\delta}\alpha-60\sqrt{2}(1+\delta)\alpha^{2}-25(1+\delta)^{2}\alpha^{3}\geq1-\frac{7}{2}L_{\delta}\alpha\geq\frac{1}{2}.
\end{align*}

This implies that 
\[
f_{c}(x-\alpha d)\leq f_{c}(x)-\frac{t\Delta^{2}}{4}\leq\left(1-\frac{\alpha\mu_{P}}{4}\right)f_{c}(x),
\]
where in the last inequality, we used $\Delta^{2}\geq\mu_{P}f_{c}(x)$,
which is just the PL-inequality in Theorem \ref{thm:pl}. Finally,
since $f_{c}(x)$ satisfies the RIP condition, combining the two cases
above we get 
\begin{align}
\|X_{k^{*}}X_{k^{*}}^{T}-M^{\star}\|_{F}^{2}\lesssim\max\left\{ \frac{1+\delta}{1-\delta}\left(1-\alpha\frac{\mu_{P}}{2}\right)^{k}\|X_{0}X_{0}^{T}-M^{*}\|_{F}^{2},\mathcal{E}_{stat}\right\} ,
\end{align}
as desired. 
\end{proof}

\section{Proof of Noisy Case with Variance Proxy (Theorem \ref{thm_noisy_var})}

In this section we prove Theorem \ref{thm_noisy_var}, which we restate
below for convenience. The only difference between this theorem and
Theorem \ref{thm_oracle} is that we de not assume that we have access
to the optimal choice of $\eta$. Instead, we only assume that we
have some proxy $\hat{\sigma}^{2}$ of the true variance of the noise.
For convenience we restate our result below. 
\begin{thm}[Noisy measurements with variance proxy]
Suppose that the noise vector $\epsilon\in\mathbb{R}^{m}$ has sub-Gaussian
entries with zero mean and variance $\sigma^{2}=\frac{1}{m}\sum_{i=1}^{m}\mathbb{E}[\epsilon_{i}^{2}]$.
Moreover, suppose that $\eta_{k}=\sqrt{|f(X_{k})-\hat{\sigma}^{2}|}$
for $k=0,1,\dots,K$, where $\hat{\sigma}^{2}$ is an approximation
of $\sigma^{2}$, and that the initial point $X_{0}$ satisfies $\|\mathcal{A}(X_{0}X_{0}^{T}-M^{*})\|_{F}^{2}<\rho^{2}(1-\delta)\lambda_{r^{*}}(M^{\star})^{2}$.
Consider $k^{*}=\arg\min_{k}\eta_{k}$, and suppose that 
$\alpha\leq1/L,$ where $L>0$ is a constant that only depends on
$\delta$. Then, with high probability, we have 
\begin{align}
\|X_{k^{*}}X_{k^{*}}^{T}-M^{*}\|_{F}^{2}\lesssim\max\Bigg\{ & \frac{1+\delta}{1-\delta}\left(1-\alpha\frac{\mu_{P}}{2}\right)^{K}\|X_{0}X_{0}^{T}-M^{*}\|_{F}^{2},\mathcal{E}_{stat},\mathcal{E}_{dev},\mathcal{E}_{var}\Bigg\},
\end{align}
where 
\begin{align}
\mathcal{E}_{stat}:=\frac{\sigma^{2}nr\log n}{\mu_{P}(1-\delta)m},\quad\mathcal{E}_{dev}:=\frac{\sigma^{2}}{1-\delta}\sqrt{\frac{\log n}{m}},\quad\mathcal{E}_{var}:=|\sigma^{2}-\hat{\sigma}^2|^{2}.
\end{align}
\end{thm}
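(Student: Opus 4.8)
The plan is to deduce Theorem~\ref{thm_noisy_var} from Theorem~\ref{thm_oracle} by showing that the proxy damping $\eta_k=\sqrt{|f(X_k)-\hat\sigma^2|}$ is, up to absolute constants, the same as the oracle damping $\tfrac1{\sqrt m}\|\mathcal{A}(X_kX_k^T-M^\star)\|$ as long as the signal $\tfrac1m\|\mathcal{A}(X_kX_k^T-M^\star)\|^2$ dominates the noise-induced error, and that outside this regime the current iterate already realizes the claimed error bound. The starting point is the exact decomposition
\[
f(X)-\hat\sigma^2=f_c(X)+\Big(\tfrac1m\|\epsilon\|^2-\sigma^2\Big)+\tfrac2m\langle\mathcal{A}(ZZ^T-XX^T),\epsilon\rangle+(\sigma^2-\hat\sigma^2),
\]
where $f_c$ is the clean objective from Section~\ref{sec:Prelim_noisy}.

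\textbf{Step 1 (controlling the noise terms).} I would first bound the variance deviation $|\tfrac1m\|\epsilon\|^2-\sigma^2|$ by $(1-\delta)\mathcal{E}_{dev}$ with high probability, using a Bernstein inequality for the sub-exponential variables $\epsilon_i^2$. For the cross-term, since $E=XX^T-M^\star$ has rank at most $2r$, Cauchy--Schwarz and Lemma~\ref{lem:AE} give $|\tfrac2m\langle\mathcal{A}(E),\epsilon\rangle|\lesssim\sqrt r\,\|E\|_F\sqrt{\sigma^2 n\log n/m}$, and an AM--GM split together with $f_c(X)\ge(1-\delta)\|E\|_F^2$ (RIP) absorbs this into $\tfrac14 f_c(X)+C\mu_P\mathcal{E}_{stat}$. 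The key observation is that both concentration events depend only on $\epsilon$ and $\mathcal{A}$, not on the running iterate, so a single high-probability event suffices for all of $k=0,\dots,K$.

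\textbf{Step 2 (two-sided comparison and replay).} Combining Step~1, there is a threshold $\tau^2\asymp\max\{(1-\delta)\mathcal{E}_{dev},\mu_P\mathcal{E}_{stat},\mathcal{E}_{var}\}$ such that: (i) whenever $f_c(X_k)\ge\tau^2$ we have $\tfrac12 f_c(X_k)\le\eta_k^2\le 2f_c(X_k)$, so $\eta_k$ obeys the damping condition~\eqref{eq:condition} with absolute constants and, in particular, the gradient-dominance inequality of Theorem~\ref{thm:pl} applies to $f_c$ at $X_k$; and (ii) whenever $f_c(X_k)<\tau^2$, RIP gives $\|X_kX_k^T-M^\star\|_F^2\le\tfrac1{1-\delta}f_c(X_k)\lesssim\max\{\mathcal{E}_{stat},\mathcal{E}_{dev},\mathcal{E}_{var}\}$, already inside the claimed maximum. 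In case~(i), Lemma~\ref{lem:descent}, the second-order bound of Lemma~\ref{hess}, and the higher-order bounds of Lemma~\ref{lem:high_order} let me replay Steps~I--V of the proof of Theorem~\ref{thm_oracle} essentially verbatim --- the variance deviation and proxy bias are carried through exactly where the $\mathcal{E}_{dev}$ and $\mathcal{E}_{var}$ slack already appears --- yielding $f_c(X_{k+1})\le(1-\alpha\mu_P/4)f_c(X_k)$ for $\alpha\le1/L$ with $L$ depending only on $\delta$, and keeping the iterate in the neighborhood $\|\mathcal{A}(X_kX_k^T-M^\star)\|^2<\rho^2(1-\delta)\lambda_{r^\star}^2(M^\star)$. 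Taking $k^{\star}=\arg\min_k\eta_k$ and branching on (i)/(ii) as in Theorem~\ref{thm_oracle}: either some iterate falls below $\tau^2$, giving the $\mathcal{E}_{stat}/\mathcal{E}_{dev}/\mathcal{E}_{var}$ floor, or $f_c$ contracts at every step, so $f_c(X_K)\le(1-\alpha\mu_P/4)^K f_c(X_0)$, which RIP converts into the geometric term $\tfrac{1+\delta}{1-\delta}(1-\alpha\mu_P/2)^K\|X_0X_0^T-M^\star\|_F^2$.

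\textbf{Main obstacle.} The hardest part will be Step~2: verifying that neither the cross-term nor the unknown bias $\sigma^2-\hat\sigma^2$ can push $|f(X_k)-\hat\sigma^2|$ away from $f_c(X_k)$ in the large-signal regime, with constants clean enough that~\eqref{eq:condition} holds with \emph{absolute} $C_{\lb},C_{\ub}$, and doing so uniformly along the trajectory from one high-probability event. Once that comparison is in place, the rest is a near-mechanical re-run of the oracle analysis of Theorem~\ref{thm_oracle}, with the two extra error floors $\mathcal{E}_{dev}$ (from the variance deviation) and $\mathcal{E}_{var}$ (from the proxy bias) appearing in the final maximum.
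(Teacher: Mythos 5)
Your proposal is correct and follows essentially the same route as the paper: the paper's Lemma~\ref{lem:proxy} is exactly your Step~2 comparison (showing $\eta_k\asymp\frac{1}{\sqrt m}\|\mathcal{A}(X_kX_k^T-M^\star)\|$ once $\|X_kX_k^T-M^\star\|_F^2$ exceeds a threshold built from $\mathcal{E}_{stat},\mathcal{E}_{dev},\mathcal{E}_{var}$), and the paper likewise branches on whether $\min_k\eta_k$ falls below that threshold, reading off the error floor in one case and replaying the Theorem~\ref{thm_oracle} analysis verbatim in the other. Your decomposition of $f(X)-\hat\sigma^2$ and the concentration bounds on the variance deviation and cross-term match the paper's argument step for step.
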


The proof of Theorem 8 is similar to that of Theorem 7, with a key
difference that $\eta_{k}=\frac{1}{\sqrt{m}}\|\mathcal{A}(X_{k}X_{k}^{T}-M^{\star})\|$
is replaced with $\eta_{k}=\sqrt{|f(x_{k})-\hat{\sigma}^{2}|}$. Our
next lemma shows that this alternative choice of damping parameter
remains close to $\frac{1}{\sqrt{m}}\|\mathcal{A}(X_{k}X_{k}^{T}-M^{\star})\|$,
provided that the error exceeds a certain threshold.
\begin{lem}
\label{lem:proxy} Set $\eta=\sqrt{|f(x)-\hat{\sigma}^{2}|}$. Then,
with high probability, we have 
\[
\sqrt{\frac{1/4-\delta}{1+\delta}}\frac{1}{\sqrt{m}}\left\Vert \mathcal{A}(XX^{T}-M^{\star})\right\Vert \leq\eta\leq\sqrt{\frac{7/4+\delta}{1-\delta}}\frac{1}{\sqrt{m}}\left\Vert \mathcal{A}(XX^{T}-M^{\star})\right\Vert 
\]
provided that 
\[
\|XX^{T}-M^{\star}\|_{F}^{2}\gtrsim\max\left\{ {\frac{\sigma^{2}rn\log n}{m}},\sqrt{\frac{\sigma^{2}\log n}{m}},|\sigma^{2}-\hat{\sigma}^{2}|\right\} .
\]
\end{lem}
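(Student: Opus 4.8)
The plan is to expand $\eta^{2}=|f(X)-\hat\sigma^{2}|$ around $f_{c}(X)=\tfrac1m\|\mathcal{A}(XX^{T}-M^{\star})\|^{2}$ and show that the two differ by at most a small fraction of $\|XX^{T}-M^{\star}\|_{F}^{2}$, after which the claim follows from RIP. Writing $E=XX^{T}-M^{\star}$ and using the noisy decomposition $f(X)=f_{c}(X)+\tfrac1m\|\epsilon\|^{2}-\tfrac2m\langle\mathcal{A}(E),\epsilon\rangle$ from Section~\ref{sec:Prelim_noisy},
\[
f(X)-\hat\sigma^{2}=f_{c}(X)+\underbrace{\Big(\tfrac1m\|\epsilon\|^{2}-\sigma^{2}\Big)}_{\text{variance deviation}}+\underbrace{(\sigma^{2}-\hat\sigma^{2})}_{\text{variance proxy error}}-\underbrace{\tfrac2m\langle\mathcal{A}(E),\epsilon\rangle}_{\text{cross-term}}.
\]

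First I would control each of the three perturbation terms with high probability. The variance deviation is an average of centered sub-exponential variables $\epsilon_{i}^{2}-\mathbb{E}[\epsilon_{i}^{2}]$, so Bernstein's inequality gives $|\tfrac1m\|\epsilon\|^{2}-\sigma^{2}|\lesssim\sigma^{2}\sqrt{\log n/m}$; the variance proxy error is the deterministic quantity $|\sigma^{2}-\hat\sigma^{2}|$; and for the cross-term I would write $\langle\mathcal{A}(E),\epsilon\rangle=\langle E,\sum_{i=1}^{m}\epsilon_{i}A_{i}\rangle$, use trace--operator norm duality together with $\mathrm{rank}(E)\le2r$ to get $|\langle\mathcal{A}(E),\epsilon\rangle|\le\|E\|_{*}\|\sum_{i}\epsilon_{i}A_{i}\|_{2}\le\sqrt{2r}\,\|E\|_{F}\,\|\sum_{i}\epsilon_{i}A_{i}\|_{2}$, and finally apply Lemma~\ref{lem:AE} to conclude $\tfrac2m|\langle\mathcal{A}(E),\epsilon\rangle|\lesssim\|E\|_{F}\sqrt{\sigma^{2}nr\log n/m}$.

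Next I would invoke the hypothesis $\|E\|_{F}^{2}\gtrsim\max\{\sigma^{2}rn\log n/m,\ \sqrt{\sigma^{2}\log n/m},\ |\sigma^{2}-\hat\sigma^{2}|\}$: with suitable absolute constants, each of the three bounds above is at most $\tfrac14\|E\|_{F}^{2}$, so the total perturbation is at most $\tfrac34\|E\|_{F}^{2}$. Combining with the RIP bounds $(1-\delta)\|E\|_{F}^{2}\le f_{c}(X)\le(1+\delta)\|E\|_{F}^{2}$ yields
\[
(\tfrac14-\delta)\|E\|_{F}^{2}\le f(X)-\hat\sigma^{2}\le(\tfrac74+\delta)\|E\|_{F}^{2},
\]
where the lower bound is positive since $\delta<1/4$, so the absolute value in $\eta^{2}$ is redundant. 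Re-expressing the left-hand side through $f_{c}(X)\le(1+\delta)\|E\|_{F}^{2}$ and the right-hand side through $f_{c}(X)\ge(1-\delta)\|E\|_{F}^{2}$ turns this into $\tfrac{1/4-\delta}{1+\delta}f_{c}(X)\le\eta^{2}\le\tfrac{7/4+\delta}{1-\delta}f_{c}(X)$, and taking square roots (recalling $f_{c}(X)=\tfrac1m\|\mathcal{A}(E)\|^{2}$) gives exactly the stated inequality.

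I expect the cross-term to be the main technical point: the bound must hold uniformly over every iterate $X$ along the trajectory --- equivalently over all rank-$\le2r$ symmetric $E$ --- since $X$ is correlated with $\epsilon$ through the algorithm. This is resolved cleanly because Lemma~\ref{lem:AE} bounds $\|\tfrac1m\sum_{i}\epsilon_{i}A_{i}\|_{2}$ by a quantity that does not involve $X$, and the passage from this to $\langle E,\sum_{i}\epsilon_{i}A_{i}\rangle$ via $\|E\|_{*}\le\sqrt{2r}\|E\|_{F}$ is purely deterministic, so no net over $X$ is needed --- only the single high-probability event of Lemma~\ref{lem:AE} together with the sub-exponential concentration event for the noise variance. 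The remaining work is just bookkeeping of the numerical constants so that $\tfrac14$ and $\tfrac74$ emerge and the (mild) requirement $\delta<1/4$ is respected.
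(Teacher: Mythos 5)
Your proof is correct and follows essentially the same route as the paper's: the same decomposition of $f(X)-\hat\sigma^{2}$ into $f_{c}(X)$ plus variance deviation, proxy error, and cross-term, the same sub-exponential concentration for the variance deviation, and the same $\sqrt{2r}\,\|E\|_{F}\,\bigl\|\tfrac{1}{m}\sum_{i}\epsilon_{i}A_{i}\bigr\|_{2}$ control of the cross-term via Lemma~\ref{lem:AE} (the paper phrases this through its restricted Frobenius norm, you through nuclear--operator-norm duality, but the resulting bound is identical), leading to the same $(\tfrac{1}{4}-\delta)$ and $(\tfrac{7}{4}+\delta)$ constants after RIP. Your closing observation that no union bound over iterates is needed, because the two concentration events do not depend on $X$, is also correct and is precisely what the paper implicitly relies on.
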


\begin{proof}
One can write 
\begin{align*}
f(x) & =\frac{1}{m}\|y-\AA(XX^{T})\|^{2}=\frac{1}{m}\|\AA(M^{\star}-XX^{T})+\epsilon\|^{2}\\
 & =\frac{1}{m}\|\AA(M^{\star}-XX^{T})\|^{2}+\frac{1}{m}\|\epsilon\|^{2}+\frac{2}{m}\inner{\AA(M^{\star}-XX^{T})}{\epsilon}.
\end{align*}
Due to the definition of the restricted Frobenius norm (\ref{eq:frob}),
we have 
\[
|\inner{\AA(M^{\star}-XX^{T})}{\epsilon}|\leq\|M^{\star}-XX^{T}\|_{F}\left\Vert \frac{1}{m}\sum_{i=1}^{m}A_{i}\epsilon_{i}\right\Vert _{F,2r}.
\]
Therefore, we have 
\begin{align}
 & \left|\frac{1}{m}\|\AA(M^{\star}-XX^{T})\|^{2}+\frac{1}{m}\|\epsilon\|^{2}-\hat{\sigma}^{2}-2\|M^{\star}-XX^{T}\|_{F}\left\Vert \frac{1}{m}\sum_{i=1}^{m}A_{i}\epsilon_{i}\right\Vert _{F,2r}\right|\leq\eta^{2}\label{eq_eta}\\
 & \left|\frac{1}{m}\|\AA(M^{\star}-XX^{T})\|^{2}+\frac{1}{m}\|\epsilon\|^{2}-\hat{\sigma}^{2}+2\|M^{\star}-XX^{T}\|_{F}\left\Vert \frac{1}{m}\sum_{i=1}^{m}A_{i}\epsilon_{i}\right\Vert _{F,2r}\right|\geq\eta^{2}.
\end{align}
Since the error $\epsilon_{i}$ is sub-Gaussian with parameter $\sigma$,
the random variable $\epsilon_{i}^{2}$ is sub-exponential with parameter
$16\sigma$. Therefore, 
\[
\mathbb{P}\left(\left|\frac{1}{m}\|\epsilon\|^{2}-\sigma^{2}\right|\geq t\right)\leq2\exp\left(-\frac{Cmt^{2}}{\sigma^{2}}\right).
\]
Now, upon setting $t=\sqrt{\frac{\sigma^{2}\log n}{m}}$, we have
\[
\left|\frac{1}{m}\|\epsilon\|^{2}-\sigma^{2}\right|\leq\sqrt{\frac{\sigma^{2}\log n}{m}},
\]
Moreover, we have 
\begin{align}
\left\Vert \frac{1}{m}\sum_{i=1}^{m}A_{i}\epsilon_{i}\right\Vert _{F,2r}\leq\sqrt{2r}\left\Vert \frac{1}{m}\sum_{i=1}^{m}A_{i}\epsilon_{i}\right\Vert _{2}\lesssim\sqrt{\frac{\sigma^{2}rn\log n}{m}}.
\end{align}
Combining the above two inequalities with~\eqref{eq_eta} leads to
\begin{align}
\eta^{2} & \geq\frac{1}{m}\|\AA(M^{\star}-XX^{T})\|^{2}-C\|M^{\star}-XX^{T}\|_{F}\sqrt{\frac{\sigma^{2}rn\log n}{m}}-\sqrt{\frac{\sigma^{2}\log n}{m}}-|\sigma^{2}-\hat{\sigma}^{2}|\nonumber \\
 & \geq(1-\delta)\|XX^{T}-M^{\star}\|_{F}^{2}-C\|XX^{T}-M^{\star}\|_{F}\sqrt{\frac{\sigma^{2}rn\log n}{m}}-\sqrt{\frac{\sigma^{2}\log n}{m}}-|\sigma^{2}-\hat{\sigma}^{2}|.\label{eq_eta2}
\end{align}
Now assuming that 
\[
\|XX^{T}-M^{\star}\|_{F}^{2}\geq\max\left\{ 16C^{2}{\frac{\sigma^{2}rn\log n}{m}},4\sqrt{\frac{\sigma^{2}\log n}{m}},4|\sigma^{2}-\hat{\sigma}^{2}|\right\} ,
\]
the inequality~\eqref{eq_eta2} can be further lower bounded as 
\[
\eta^{2}\geq(1/4-\delta)\|XX^{T}-M^{\star}\|_{F}^{2}\geq\frac{1/4-\delta}{1+\delta}\frac{1}{m}\|\mathcal{A}(XX^{T}-M^{\star})\|,
\]
which completes the proof for the lower bound. The upper bound on
$\eta^{2}$ can be established in a similar fashion. 
\end{proof}
Now we are ready to prove Theorem \ref{thm_noisy_var}. 
\begin{proof}
We consider two cases. First, suppose that 
\[
\min_{k}\eta_{k}\lesssim\max\left\{ {\frac{\sigma^{2}rn\log n}{m}},\sqrt{\frac{\sigma^{2}\log n}{m}},|\sigma^{2}-\hat{\sigma}^{2}|\right\} .
\]
Combined with~\eqref{eq_eta2}, this implies that 
\begin{align}
 & (1-\delta)\|X_{k^{*}}X_{k^{*}}^{T}-M^{\star}\|_{F}^{2}-C\|X_{k^{*}}X_{k^{*}}^{T}-M^{\star}\|_{F}\sqrt{\frac{\sigma^{2}rn\log n}{m}}\nonumber \\
 & \lesssim\max\left\{ {\frac{\sigma^{2}rn\log n}{m}},\sqrt{\frac{\sigma^{2}\log n}{m}},|\sigma^{2}-\hat{\sigma}^{2}|\right\} .\label{eq:eq_up}
\end{align}
Now, if $\|X_{k^{*}}X_{k^{*}}^{T}-M^{\star}\|_{F}\leq2C\sqrt{\frac{\sigma^{2}rn\log n}{m}}$,
then the proof is complete. Therefore, suppose that $\|X_{k^{*}}X_{k^{*}}^{T}-M^{\star}\|_{F}>2C\sqrt{\frac{\sigma^{2}rn\log n}{m}}$.
This together with~\eqref{eq:eq_up} leads to 
\[
\|X_{k^{*}}X_{k^{*}}^{T}-M^{\star}\|_{F}^{2}\lesssim\frac{1}{1/2-\delta}\max\left\{ {\frac{\sigma^{2}rn\log n}{m}},\sqrt{\frac{\sigma^{2}\log n}{m}},|\sigma^{2}-\hat{\sigma}^{2}|\right\} ,
\]
which again completes the proof. Finally, suppose that 
\[
\min_{k}\eta_{k}\gtrsim\max\left\{ {\frac{\sigma^{2}rn\log n}{m}},\sqrt{\frac{\sigma^{2}\log n}{m}},|\sigma^{2}-\hat{\sigma}^{2}|\right\} .
\]
This combined with~\eqref{eq_eta} implies that 
\begin{align*}
 & (1+\delta)\|X_{k^{*}}X_{k^{*}}^{T}-M^{\star}\|_{F}^{2}+C\|X_{k^{*}}X_{k^{*}}^{T}-M^{\star}\|_{F}\sqrt{\frac{\sigma^{2}rn\log n}{m}}\\
 & \gtrsim\max\left\{ {\frac{\sigma^{2}rn\log n}{m}},\sqrt{\frac{\sigma^{2}\log n}{m}},|\sigma^{2}-\hat{\sigma}^{2}|\right\} ,
\end{align*}
for every $k=0,1,\dots,K$. If $\|X_{k^{*}}X_{k^{*}}^{T}-M^{\star}\|_{F}\leq2C\sqrt{\frac{\sigma^{2}rn\log n}{m}}$,
then the proof is complete. Therefore, suppose that $\|X_{k^{*}}X_{k^{*}}^{T}-M^{\star}\|_{F}>2C\sqrt{\frac{\sigma^{2}rn\log n}{m}}$.
This together with the above inequality results in 
\begin{align*}
\|X_{k}X_{k}^{T}-M^{\star}\|_{F}^{2} & \gtrsim\frac{1}{3/2+\delta}\max\left\{ {\frac{\sigma^{2}rn\log n}{m}},\sqrt{\frac{\sigma^{2}\log n}{m}},|\sigma^{2}-\hat{\sigma}^{2}|\right\} \\
 & \gtrsim\max\left\{ {\frac{\sigma^{2}rn\log n}{m}},\sqrt{\frac{\sigma^{2}\log n}{m}},|\sigma^{2}-\hat{\sigma}^{2}|\right\} 
\end{align*}
for every $k=0,1,\dots,K$. Therefore, Lemma~\ref{lem:proxy} can
be invoked to show that 
\[
\eta_{k}\asymp\frac{1}{\sqrt{m}}\|\mathcal{A}(X_{k}X_{k}^{T}-M^{\star})\|.
\]
With this choice of $\eta_{k}$, the rest of the proof is identical
to that of Theorem 7, and omitted for brevity. 
\end{proof}

\section{Proof for Spectral Initialization (Proposition \ref{prop:spectinit})}

In this section we prove that spectral initialization is able to generate
a sufficiently good initial point so that PrecGD achieves a linear
convergence rate, even in the noisy case. For convenience we restate
our result below. 
\begin{prop}[Spectral Initialization]
Suppose that $\delta\leq(8\kappa\sqrt{r^{*}})^{-1}$ and $m\gtrsim\frac{1+\delta}{1-\delta}\frac{\sigma^{2}rn\log n}{\rho^{2}\lambda_{r^{\star}}^{2}(M^{\star})}$
where $\kappa=\lambda_{1}(M^{\star})/\lambda_{r^{\star}}(M^{\star})$.
Then, with high probability, the initial point $X_{0}$ produced by~\eqref{eq:spec}
satisfies the radius condition (\ref{eq:radius}). 
\end{prop}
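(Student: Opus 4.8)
The plan is to convert the radius condition \eqref{eq:radius} into a Frobenius-norm estimate via RIP, and then bound the spectral error of $X_0$ by combining the optimality of the rank-$r$ PSD projection with the RIP near-isometry and the noise concentration bound of Lemma~\ref{lem:AE}. Since $X_0X_0^T-M^\star$ has rank at most $2r$, \eqref{eq:rip} gives $\|\mathcal{A}(X_0X_0^T-M^\star)\|^2\le(1+\delta)\|X_0X_0^T-M^\star\|_F^2$, so it suffices to prove
\[
\|X_0X_0^T-M^\star\|_F < \rho\sqrt{\tfrac{1-\delta}{1+\delta}}\,\lambda_{r^\star}(M^\star).
\]
Write $\widehat M:=\tfrac1m\sum_{i=1}^m y_iA_i = M^\star+\Delta_1+\Delta_2$ with deterministic part $\Delta_1:=\tfrac1m\sum_{i=1}^m\langle A_i,M^\star\rangle A_i-M^\star$ and noise part $\Delta_2:=\tfrac1m\sum_{i=1}^m\epsilon_iA_i$, and recall from \eqref{eq:spec} that $X_0X_0^T$ is exactly the best rank-$r$ PSD approximation of $\widehat M$ in Frobenius norm.

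The key step is a variational/optimality argument. Because $M^\star\succeq0$ has rank $r^\star\le r$, it is a feasible competitor for the projection, so $\|X_0X_0^T-\widehat M\|_F^2\le\|M^\star-\widehat M\|_F^2$; expanding both sides about $M^\star$ and cancelling the common term $\|\Delta_1+\Delta_2\|_F^2$ yields
\[
\|X_0X_0^T-M^\star\|_F^2 \le 2\bigl\langle X_0X_0^T-M^\star,\ \Delta_1+\Delta_2\bigr\rangle .
\]
Since $X_0X_0^T-M^\star$ has rank at most $2r$, applying the restricted Cauchy--Schwarz inequality (von~Neumann's trace inequality, cf.\ the norm defined in \eqref{eq:frob}) to the right-hand side gives $\|X_0X_0^T-M^\star\|_F\le 2\bigl(\|\Delta_1\|_{F,2r}+\|\Delta_2\|_{F,2r}\bigr)$. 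For the deterministic term, polarizing the RIP inequality shows $\|\Delta_1\|_{F,2r}\le\delta\|M^\star\|_F\le\delta\sqrt{r^\star}\,\lambda_1(M^\star)=\delta\sqrt{r^\star}\,\kappa\,\lambda_{r^\star}(M^\star)\le\tfrac18\lambda_{r^\star}(M^\star)$, where the last step is the hypothesis $\delta\le(8\kappa\sqrt{r^\star})^{-1}$. For the noise term, $\|\Delta_2\|_{F,2r}\le\sqrt{2r}\,\|\Delta_2\|_2\lesssim\sqrt{\sigma^2rn\log n/m}$ with high probability by Lemma~\ref{lem:AE}; substituting the assumed bound $m\gtrsim\tfrac{1+\delta}{1-\delta}\tfrac{\sigma^2rn\log n}{\rho^2\lambda_{r^\star}^2(M^\star)}$ makes it $\lesssim\rho\sqrt{\tfrac{1-\delta}{1+\delta}}\,\lambda_{r^\star}(M^\star)$. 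Adding the two contributions puts $\|X_0X_0^T-M^\star\|_F$ strictly below $\rho\sqrt{(1-\delta)/(1+\delta)}\,\lambda_{r^\star}(M^\star)$, which is the required bound.

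The main obstacle is bookkeeping, in two places. First, polarizing RIP to compare $\tfrac1m\langle\mathcal{A}(M^\star),\mathcal{A}(G)\rangle$ with $\langle M^\star,G\rangle$ for a rank-$\le2r$ test matrix $G=X_0X_0^T-M^\star$ formally needs the restricted isometry to hold at rank $2r+r^\star\le 3r$ rather than $2r$; since $r^\star\le r$ this is innocuous, but it must either be folded into the RIP hypothesis for this proposition or circumvented by splitting $G$ into its rank-$\le r$ summands and invoking \eqref{eq:rip-cs} termwise (the cruder split is what forces the $\sqrt{r^\star}\kappa$ factor appearing in the $\delta$-condition). Second, one must track absolute constants so that $\tfrac18\lambda_{r^\star}+(\text{noise term})$ indeed lies below $\rho\sqrt{(1-\delta)/(1+\delta)}\,\lambda_{r^\star}$; this uses that $\rho$ is an absolute constant bounded away from $0$ (consistent with the companion restriction $\rho^2/(1-\rho^2)\le(1-\delta^2)/2$ from Theorem~\ref{thm:pl}), possibly after shrinking the constant $\tfrac18$ in the $\delta$-bound. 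Everything else — Weyl's inequality to confirm that $X_0X_0^T$ captures the dominant $r^\star$-dimensional eigenspace of $\widehat M$, and the passage between $\|\cdot\|_F$ and $\|\mathcal{A}(\cdot)\|$ — is routine.
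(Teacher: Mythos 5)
Your proposal is correct and follows essentially the same route as the paper: both split $\frac{1}{m}\sum_i y_iA_i - M^{\star}$ into a deterministic RIP-bias term bounded by $\delta\|M^{\star}\|_{F}\le\delta\sqrt{r^{\star}}\kappa\lambda_{r^{\star}}(M^{\star})$ and a noise term controlled by Lemma~\ref{lem:AE}, exploit the optimality of the rank-$r$ projection against the competitor $M^{\star}$, and close with the hypotheses on $\delta$ and $m$. The only difference is cosmetic — you extract the optimality via expanding the square plus restricted Cauchy--Schwarz (constant $2$), while the paper chains triangle inequalities in the restricted Frobenius norm (constant $2\sqrt{2}$) — and your caveats about the rank needed for RIP polarization and about the $\rho$-dependence of the $\delta$-condition are issues the paper's own proof also has (its final step in fact uses $\delta\le\rho/(8\sqrt{r^{\star}}\kappa)$).
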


\begin{proof}
Let $\mathcal{A}^{*}:\mathbb{R}^{m}\to\mathbb{R}^{n\times n}$ be
the dual of the linear operator $\mathcal{A}(\cdot)$, defined as
$\mathcal{A}^{*}(y)=\sum_{i=1}^{m}y_{i}A_{i}.$ Based on this definition,
the initial point $X_{0}\in\mathbb{R}^{n\times r}$ satisfies $X_{0}=\mathcal{P}_{r}\left(\frac{1}{m}\AA^{*}(y)\right),$
where we recall that 
\[
\mathcal{P}_{r}(M)=\arg\min_{X\in\R^{n\times r}}\|XX^{T}-M\|_{F}.
\]
Define $E=X_{0}X_{0}^{T}-M^{\star}$, and note that $\mathrm{rank}(E)\leq2r$.
It follows that 
\begin{align*}
\|E\|_{F} & =\sqrt{\sum_{i=1}^{r}\sigma_{i}(E)^{2}+\sum_{i=r+1}^{2r}\sigma_{i}(E)^{2}}\leq\sqrt{2}\|E\|_{F,2r}\\
 & \leq\sqrt{2}\left\Vert X_{0}X_{0}^{T}-\frac{1}{m}\AA^{*}(y)\right\Vert _{F,2r}+\sqrt{2}\left\Vert \frac{1}{m}\AA^{*}(y)-M^{\star}\right\Vert _{F,2r}\\
 & \leq2\sqrt{2}\left\Vert \frac{1}{m}\AA^{*}(y)-M^{\star}\right\Vert _{F,2r}\\
 & \leq2\sqrt{2}\left\Vert \frac{1}{m}\AA^{*}(\AA(M^{\star}))-M^{\star}\right\Vert _{F,2r}+2\sqrt{2}\left\Vert \frac{1}{m}A_{i}\epsilon_{i}\right\Vert _{F,2r}\\
 & \leq2\sqrt{2}\delta\|M^{\star}\|_{F}+2\sqrt{2}\left\Vert \frac{1}{m}A_{i}\epsilon_{i}\right\Vert _{F,2r}.
\end{align*}
Now, note that $\|M^{\star}\|_{F}\leq\sqrt{r^{*}}\kappa\lambda_{r^{*}}(M^{\star})$.
Moreover, due to Lemma \ref{lem:AE}, we have 
\begin{align}
2\sqrt{2}\left\Vert \frac{1}{m}A_{i}\epsilon_{i}\right\Vert _{F,2r}\leq2\sqrt{2}\sqrt{2r}\left\Vert \frac{1}{m}A_{i}\epsilon_{i}\right\Vert _{2}\lesssim\sqrt{\frac{\sigma^{2}rn\log n}{m}}.
\end{align}
This implies that 
\begin{align*}
\frac{1}{m}\|\mathcal{A}(X_{0}X_{0}^{T}-M^{\star})\|^{2}\leq16(1+\delta)r^{*}\kappa^{2}\lambda_{r^{*}}(M^{\star})^{2}\delta^{2}+C\frac{\sigma^{2}rn\log n}{m}
\end{align*}
Therefore, upon choosing $\delta\leq\frac{\rho}{8\sqrt{r^{*}}{\kappa}}$
and $m\gtrsim\frac{1+\delta}{1-\delta}\frac{\sigma^{2}rn\log n}{\rho^{2}\lambda_{r^{*}}^{2}(M^{\star})}$,
we have 
\begin{align}
\frac{1}{m}\|\mathcal{A}(XX^{T}-M^{*})\|^{2}\leq\rho^{2}(1-\delta)\lambda_{r^{*}}(M^{\star})^{2}
\end{align}
This completes the proof. 
\end{proof}

\section{\label{app_aux} Proof of Lemma~\ref{lem:AE}}

First we state a standard concentration inequality. A proof of this
result can be found in \citet{tropp2015introduction}.
\begin{lem}[Matrix Bernstein's inequality]
Suppose that $\{W_{i}\}_{i=1}^{m}$ are matrix-valued random variables
such that $\E[W_{i}]=0$ and $\|W_{i}\|_{2}\leq R^{2}$ for all $i=1,\dots,m$.
Then 
\[
\mathbb{P}\left(\left\Vert \sum_{i=1}^{m}W_{i}\right\Vert \geq t\right)\leq n\exp\left(\frac{-t^{2}}{2\left\Vert \sum_{i=1}^{m}\E\left[W_{i}^{2}\right]\right\Vert _{2}+\frac{2R^{2}}{3}t}\right).
\]
\end{lem}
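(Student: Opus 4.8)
The plan is to follow the matrix Laplace transform method of Ahlswede--Winter and Tropp; since the paper already cites \citet{tropp2015introduction} for this result, I will only sketch the argument. Assume without loss of generality that each $W_i$ is Hermitian (a general $W_i$ is reduced to this case by passing to its Hermitian dilation $\mathcal{H}(W_i)=\left[\begin{smallmatrix}0 & W_i\\ W_i^{*} & 0\end{smallmatrix}\right]$, which preserves operator norms at the cost of doubling the ambient dimension), and that the $W_i$ are independent. Write $S=\sum_{i=1}^{m}W_i$, let $R^{2}$ denote the uniform bound $\|W_i\|_{2}\le R^{2}$, and set $v=\|\sum_{i=1}^{m}\E[W_i^{2}]\|_{2}$. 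The starting point is the matrix Chernoff bound: for every $\theta>0$, Markov's inequality applied to the nonnegative random variable $\mathrm{Tr}[\exp(\theta S)]\ge\lambda_{\max}(\exp(\theta S))=e^{\theta\lambda_{\max}(S)}$ gives
\[
\Pb\bigl(\lambda_{\max}(S)\ge t\bigr)\le e^{-\theta t}\,\E\,\mathrm{Tr}[\exp(\theta S)].
\]

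The crux — and the only non-elementary ingredient, hence the main obstacle — is the subadditivity of the matrix cumulant generating function, a consequence of Lieb's concavity theorem: for independent $W_i$,
\[
\E\,\mathrm{Tr}\Bigl[\exp\Bigl(\textstyle\sum_{i=1}^{m}\theta W_i\Bigr)\Bigr]\le\mathrm{Tr}\Bigl[\exp\Bigl(\textstyle\sum_{i=1}^{m}\log\E\,e^{\theta W_i}\Bigr)\Bigr];
\]
I would cite Lieb's theorem (or the reduction in Tropp, Ch.~3--4) rather than reprove it. Granting this, it remains to bound a single matrix moment generating function. Using that the scalar function $g(x)=(e^{x}-x-1)/x^{2}$ (with $g(0)=1/2$) is increasing on $\R$, so that $e^{x}\le 1+x+g(\theta R^{2})x^{2}$ for all $|x|\le\theta R^{2}$, functional calculus lifts this to $e^{\theta W_i}\preceq I+\theta W_i+g(\theta R^{2})\theta^{2}W_i^{2}$; taking expectations with $\E[W_i]=0$ and then using $I+A\preceq e^{A}$,
\[
\E\,e^{\theta W_i}\preceq I+\tfrac{e^{\theta R^{2}}-\theta R^{2}-1}{R^{4}}\,\E[W_i^{2}]\preceq\exp\!\Bigl(\tfrac{e^{\theta R^{2}}-\theta R^{2}-1}{R^{4}}\,\E[W_i^{2}]\Bigr),
\]
whence operator monotonicity of $\log$ gives $\log\E\,e^{\theta W_i}\preceq\frac{e^{\theta R^{2}}-\theta R^{2}-1}{R^{4}}\,\E[W_i^{2}]$.

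Summing these semidefinite inequalities over $i$, and using $M\preceq M'\Rightarrow\mathrm{Tr}\,e^{M}\le\mathrm{Tr}\,e^{M'}$ together with $\mathrm{Tr}\,e^{M}\le n\,e^{\lambda_{\max}(M)}$, I obtain
\[
\Pb\bigl(\lambda_{\max}(S)\ge t\bigr)\le n\exp\!\Bigl(-\theta t+\tfrac{e^{\theta R^{2}}-\theta R^{2}-1}{R^{4}}\,v\Bigr).
\]
The rest is elementary calculus: the bound $e^{x}-x-1\le\frac{x^{2}/2}{1-x/3}$, valid for $0\le x<3$, applied with $x=\theta R^{2}$ (so $0<\theta<3/R^{2}$), turns the exponent into $-\theta t+\frac{\theta^{2}v/2}{1-\theta R^{2}/3}$, and the choice $\theta=t/(v+R^{2}t/3)$ makes this equal to $\frac{-t^{2}/2}{v+R^{2}t/3}=\frac{-t^{2}}{2v+\frac{2}{3}R^{2}t}$, which is exactly the claimed exponent since $v=\|\sum_{i}\E[W_i^{2}]\|_{2}$. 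Finally, applying the same argument to $\{-W_i\}_{i=1}^{m}$ and taking a union bound controls $\|S\|_{2}=\max\{\lambda_{\max}(S),\lambda_{\max}(-S)\}$ at the cost of an inessential factor $2$, which is harmless for the ways the lemma is used in this paper. As emphasized, every step downstream of the Lieb/subadditivity inequality is mechanical, so that inequality is where the real content lies.
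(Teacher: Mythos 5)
The paper gives no proof of this lemma at all---it simply cites the Tropp reference---and your sketch is a correct reconstruction of exactly that standard argument (matrix Chernoff bound, Lieb/Tropp subadditivity of the cumulant generating function, the Bernstein-type MGF bound via the increasing function $(e^{x}-x-1)/x^{2}$, and optimization with $\theta=t/(v+R^{2}t/3)$), with the algebra checking out and yielding the stated exponent $\frac{-t^{2}}{2v+\frac{2}{3}R^{2}t}$. Your two side remarks are also apt: independence of the $W_i$ must indeed be assumed (it is tacit in the paper's statement), and the two-sided bound on $\bigl\Vert\sum_{i}W_{i}\bigr\Vert$ properly carries a prefactor $2n$ rather than $n$, a discrepancy that is immaterial for how the lemma is used in the paper.
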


We also state a standard concentration bound for the operator norm
of Gaussian ensembles. A simple proof can be found in \citet{wainwright2019high}. 
\begin{lem}
\label{Abound} Let $A\in\R^{n\times n}$ be a standard Gaussian ensemble
with i.i.d. entries. Then the largest singular value of $A$ (or equivalently,
the operator norm) satisfies 
\[
\sigma_{\max}(A)\leq(2+c)\sqrt{n}
\]
with probability at least $1-2\exp(-nc^{2}/2)$. 
\end{lem}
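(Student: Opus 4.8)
The plan is to decompose $\sigma_{\max}(A)$ into its mean and a fluctuation term, control the fluctuation with Gaussian concentration of measure, and bound the mean with a Gaussian comparison inequality. Throughout I would regard the $n^{2}$ entries of $A$ as a single standard Gaussian vector in $\R^{n^{2}}$, so that all the relevant concentration tools apply verbatim.

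First I would observe that the map $A\mapsto\sigma_{\max}(A)=\norm{A}_{2}$ is $1$-Lipschitz with respect to the Frobenius norm, since $|\norm{A}_{2}-\norm{B}_{2}|\le\norm{A-B}_{2}\le\norm{A-B}_{F}$. The Gaussian concentration (Borell--Tsirelson--Ibragimov--Sudakov) inequality then gives, for every $t\ge0$,
\[
\Pb\left(\sigma_{\max}(A)\ge\E[\sigma_{\max}(A)]+t\right)\le\exp\left(-t^{2}/2\right).
\]
Next I would bound $\E[\sigma_{\max}(A)]$ by $2\sqrt{n}$ using the Sudakov--Fernique inequality. Writing $\sigma_{\max}(A)=\max_{u,v\in S^{n-1}}\inner{u}{Av}$, I would compare the centered Gaussian process $G_{u,v}=\inner{u}{Av}$ with $H_{u,v}=\inner{g}{u}+\inner{h}{v}$, where $g,h\sim\mathcal{N}(0,I_{n})$ are independent. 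A short increment computation gives $\E|G_{u,v}-G_{u',v'}|^{2}=2-2\inner{u}{u'}\inner{v}{v'}$ and $\E|H_{u,v}-H_{u',v'}|^{2}=4-2\inner{u}{u'}-2\inner{v}{v'}$; since $(1-\inner{u}{u'})(1-\inner{v}{v'})\ge0$ on the sphere, the former never exceeds the latter, so Sudakov--Fernique yields $\E[\max_{u,v}G_{u,v}]\le\E[\max_{u,v}H_{u,v}]=\E\norm{g}+\E\norm{h}\le2\sqrt{n}$, the last step by Jensen's inequality. Combining this with the concentration bound and taking $t=c\sqrt{n}$ gives
\[
\Pb\left(\sigma_{\max}(A)\ge(2+c)\sqrt{n}\right)\le\exp\left(-nc^{2}/2\right)\le2\exp\left(-nc^{2}/2\right),
\]
which is exactly the claim, with the stated factor of $2$ left as slack.

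The only genuinely delicate step is the bound $\E[\sigma_{\max}(A)]\le2\sqrt{n}$ with the sharp constant $2$: a crude $\varepsilon$-net union bound would only produce $\E[\sigma_{\max}(A)]\le C\sqrt{n}$ for some larger absolute constant, and recovering the exact constant requires the Gaussian comparison inequality. Everything else — the Lipschitz property and the concentration step — is a mechanical application of standard facts, so I expect no further obstacle.
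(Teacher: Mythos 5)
Your proof is correct: the paper does not prove this lemma itself but defers it to the cited reference (Wainwright, 2019), and the argument given there is exactly yours --- $1$-Lipschitzness of $\sigma_{\max}$ in the Frobenius norm plus Borell--TIS concentration, combined with the Sudakov--Fernique comparison against $H_{u,v}=\inner{g}{u}+\inner{h}{v}$ to get $\E[\sigma_{\max}(A)]\le 2\sqrt{n}$. Your increment computation and the factorization $(1-\inner{u}{u'})(1-\inner{v}{v'})\ge 0$ are right, and your bound is in fact slightly sharper than stated (the factor $2$ in front of the exponential is indeed slack).
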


For simplicity, we assume that the measurement matrices $A_{i},i=1,\dots m$
are fixed and all satisfy $\|A_{i}\|\leq C\sqrt{n}$. Due to Lemma
\ref{Abound}, this assumption holds with high probability for Gaussian
measurement ensembles. Next, we provide the proof of Lemma~\ref{lem:AE}.
\begin{proof}[\textit{\emph{Proof of Lemma~\ref{lem:AE}.}}]
First, note that $\|A_{i}\varepsilon_{i}\|_{2}\leq\|A_{i}\|\cdot|\varepsilon_{i}|$.
The assumption $\|A_{i}\|\lesssim\sqrt{n}$ implies that $\|A_{i}\varepsilon_{i}\|$
is sub-Gaussian with parameter $C\sqrt{n}\sigma$. Therefore, we have
$\mathbb{P}(\|A_{i}\varepsilon\|\gtrsim\sqrt{n}t)\geq1-2\exp\left(-\frac{t^{2}}{2\sigma^{2}}\right).$
Applying the union bound yields 
\[
\mathbb{P}(\max_{i=1,\dots,m}\|A_{i}\varepsilon\|\geq\sqrt{n}t)\geq1-2m\exp\left(-\frac{t^{2}}{2\sigma^{2}}\right).
\]
Moreover, one can write 
\begin{align}
\left\Vert \sum_{i=1}^{m}\E[(A_{i}\varepsilon_{i})^{2}]\right\Vert \leq\sum_{i=1}^{m}\|A_{i}\|^{2}\E[\varepsilon_{i}^{2}]\lesssim\sigma^{2}mn
\end{align}
Using Matrix Bernstein's inequality, we get 
\[
\mathbb{P}\left(\frac{1}{m}\left\Vert \sum_{i=1}^{m}A_{i}\varepsilon\right\Vert \leq t\right)\geq1-n\exp\left(-\frac{t^{2}m^{2}}{2C\sigma^{2}mn+\frac{2}{3}C'\sqrt{n}mt}\right)-2m\exp\left(-\frac{t^{2}}{2}\right).
\]
Using $t\asymp\sqrt{\frac{\sigma^{2}n\log n}{m}}$ in the above inequality
leads to 
\begin{align*}
\mathbb{P}\left(\frac{1}{m}\left\Vert \sum_{i=1}^{m}A_{i}\varepsilon\right\Vert \lesssim\sqrt{\frac{\sigma^{2}n\log n}{m}}\right) & \geq1-n^{-C}-2m\exp\left(-\frac{t^{2}}{2}\right)\\
 & \gtrsim1-3n^{-C},
\end{align*}
where the last inequality follows from the assumption $m\gtrsim\sigma n\log n$.
This completes the proof. 
\end{proof}

\end{document}